\newcommand{\R}{{\mathbb  R}}  \numberwithin{equation}{section} \newtheorem{thm}{\bf
Theorem}[section]
\newtheorem{lem}[thm]{\bf Lemma} \newtheorem{prop}[thm]{\bf Proposition} \newtheorem{cor}[thm]{\bf Corollary}
  \theoremstyle{remark}
\newtheorem{rem}{\bf
Remark}[section]  \newtheorem{exmp}{\bf Example}[section]
\begin{document}

\title{\huge \bf Lyapunov matrix equation $A^HX+XA+C=O_r$ with $A$ a Jordan matrix} 
 \author{Dan Com\u{a}nescu \\ {\small Department of Mathematics, West University of Timi\c soara}\\ {\small Bd. V.
P\^ arvan,
No 4, 300223 Timi\c soara, Rom\^ania}\\ {\small dan.comanescu@e-uvt.ro}\\ }
\date{ }

 \maketitle
 
 \begin{abstract}
 A Lyapunov matrix equation can be converted, by using the Jordan decomposition theorem for matrices,  into an equivalent Lyapunov matrix equation where the matrix is a Jordan matrix. 
The Lyapunov matrix equation with Jordan matrix can be reduced to a system of Sylvester-Lyapunov type matrix equations. We completely solve the Sylvester-Lyapunov type matrix equations corresponding to the Jordan block matrices of the initial matrix.
 \end{abstract}

 {\bf MSC}: 15A21, 15A24, 15B57

{\bf Keywords}: Lyapunov matrix equation, Jordan matrix, Hermitian matrix.

\section{Introduction}

A Lyapunov matrix equation (or the continuous Lyapunov matrix equation) has the form
\begin{equation}\label{Lyapunov-matrix-equation}
A^HX+XA+C=O_{r},
\end{equation} 
with $A\in \mathcal{M}_{r}(\mathbb{C})$, $C\in \mathcal{M}_{r}(\mathbb{C})$ and the unknown $X\in \mathcal{M}_{r}(\mathbb{C})$. 

{ The homogeneous Lyapunov matrix equation} is:
\begin{equation}\label{Lyapunov-matrix-equation-zero}
A^HX+XA=O_{r}.
\end{equation} 

The Lyapunov matrix equation appeared, see \cite{khalil} and \cite{halanay-rasvan}, in the study of the dynamics of a linear differential equation, 
$\dot{\bf x}=A{\bf x}$,
where $A\in \mathcal{M}_{r}(\mathbb{C})$.
Dynamical aspects, like stability, are studied using a square function $V({\bf x})=\frac{1}{2}{\bf x}^HX{\bf x}$, with $X\in \mathcal{M}_{r}(\mathbb{C})$ a Hermitian matrix. The function $V$ is a conserved quantity of the linear system if and only if $X$ is a Hermitian solution of the homogeneous Lyapunov matrix equation \eqref{Lyapunov-matrix-equation-zero}.
The global asymptotic stability of the equilibrium point ${\bf 0}$ is assured by the existence of a Hermitian positive definite solution of the Lyapunov matrix equation \eqref{Lyapunov-matrix-equation}, with $C$ a Hermitian positive definite matrix.

Lyapunov matrix equations appear also in the study of optimization of a cost function defined on a matrix manifold, see \cite{birtea-casu-comanescu}.

The Lyapunov matrix equation is a particular case of the Sylvester matrix equation 
\begin{equation}\label{Sylvester-matrix-equation-202}
AX+XB+C=O_{r},
\end{equation} 
with $A\in \mathcal{M}_{r}(\mathbb{C})$, $B\in \mathcal{M}_{s}(\mathbb{C})$, $C\in \mathcal{M}_{r\times s}(\mathbb{C})$ and the unknown $X\in \mathcal{M}_{r\times s}(\mathbb{C})$. 

There are numerous studies of this equation in the literature, see \cite{gantmacher,lancaster-tismenetsky,roth}, and we find some representations of the solutions for the Sylvester matrix equation, see \cite{jameson}, but  they are difficult to use when $r$ and $s$ are large numbers. We also find studies of generalized Sylvester matrix equation, see \cite{duan}, or studies of other matrix equations which are derived from the Lyapunov matrix equation, see \cite{braden} and \cite{song-feng-zhao}. Due to the importance of the Lyapunov matrix equations, researchers have made a great effort in studying its numerical solutions, see  \cite{hammarling,simoncini}.

The set of solutions of a homogeneous Lyapunov matrix equation defines some types of matrices $A$:

$\bullet$ if the identity matrix is a solution of \eqref{Lyapunov-matrix-equation-zero}, then $A$ is a skew-Hermitian matrix;

$\bullet$ if the matrix $\begin{pmatrix}
O_r & I_r \\
-I_r & O_r
\end{pmatrix}$ is a solution of \eqref{Lyapunov-matrix-equation-zero}, then $A$ is a Hamiltonian matrix;

$\bullet$ if \eqref{Lyapunov-matrix-equation-zero} has an invertible solution, then $A^H$ and $-A$ are similar matrices.\medskip

In the papers \cite{gantmacher} and \cite{lancaster-tismenetsky} the Sylvester matrix equation is converted into another equivalent Sylvester matrix equation, by using the Jordan decomposition theorem for matrices, in which the matrices $A$ and $B$ are replaced by Jordan matrices.
In this paper we convert a Lyapunov matrix equation, by using also the Jordan decomposition theorem for matrices,  into an equivalent Lyapunov matrix equation with the matrix $A$ being replaced with a Jordan matrix. We note that our conversion (used for a Lyapunov matrix equation) is different from the conversion proposed in \cite{gantmacher} and \cite{lancaster-tismenetsky}.

In Section \ref{Lyapunov matrix equation. Generalities} we present some general results concerning the set of solutions of Lyapunov matrix equation, the set of Hermitian solutions and the set of real symmetric solutions. Also, we present the  equivalent transformation of   \eqref{Lyapunov-matrix-equation} by using a similar transformation of the matrix $A$, which represents, via Jordan canonical form, the main reason to study the Lyapunov matrix equation with Jordan matrix.

 The Lyapunov matrix equation with Jordan matrix is reduced to a system with  Sylvester-Lyapunov matrix equations with Jordan block matrices. In Section \ref{section-Jordan block matrices} we completely solve a more general equation as Lyapunov matrix equation with Jordan block matrix, called Sylvester-Lyapunov matrix equation with Jordan block matrices. For the homogeneous case we present the set of solutions, the set of Hermitian solutions, and the set of real symmetric solutions. In the non-homogeneous case with a Jordan block matrix which is not nilpotent we have a unique solution which is written by using the Pascal matrix and the extended generalized Pascal matrix introduced in \cite{zhang-liu}. For the case of square matrices  we study the solution when the matrix $C$ is positive semidefinite or positive definite.
 When both Jordan block matrices are nilpotent we present the compatibility conditions and, if they are satisfied, the set of solutions, the set of Hermitian solutions, and the set of real symmetric solutions.

In Section \ref{Lyapunov matrix equation with Jordan matrix} we present and study the equivalent system formed with Sylvester-Lyapunov matrix equations which is attached to a Lyapunov matrix equation with a Jordan matrix. For the homogeneous case we present the dimension of the complex vectorial space of solutions and the dimension of the real vectorial spaces of Hermitian solutions and real symmetric solutions. As an example we present these sets for the case of a diagonal matrix. In the homogeneous case we study the conditions for which we have a Hermitian positive definite solution and the conditions for which we have an invertible solution. We present two examples, in the homogeneous case and in the non-homogeneous case, to better understand the used method.

Section \ref{Lyapunov matrix equation. General case.} is dedicated to the general Lyapunov matrix equation. In the homogeneous case, if we know the set of the eigenvalues of the matrix $A$, by using the Jordan decomposition theorem for matrices and our results from the previous sections, we compute the dimension of the complex vectorial space of solutions, the dimension of the real vectorial space of the Hermitian solutions, and the dimension of the real vectorial space of the real symmetric solutions.  In this case we study the existence of the invertible solutions and the positive definite solutions. To compute the solutions in the general case we need to know the set of eigenvalues of $A$ and the matrix $P_A$ which appears in the Jordan decomposition theorem for matrices. We illustrate with  an example of solving the Lyapunov matrix equation in the general case  by the method presented in this paper.

In the Appendix we recall some notions and results of the matrix theory which appear in our paper.

\section{Lyapunov matrix equation. Generalities}\label{Lyapunov matrix equation. Generalities}

We denote by $\mathcal{L}_{A,C}$ the set of the solutions of \eqref{Lyapunov-matrix-equation}.
It is easy to observe the following result.

\begin{prop}\label{affine-property-general-case}
(i) $O_r\in \mathcal{L}_{A,O_r}$ and $\mathcal{L}_{A,O_r}$ is a complex vectorial space.

\noindent (ii)  If $Y_C$ is a particular solution of \eqref{Lyapunov-matrix-equation}, then we have $\mathcal{L}_{A,C}=Y_C+\mathcal{L}_{A,O_r}$.
\end{prop}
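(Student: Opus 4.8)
The plan is to recognize the left-hand side of \eqref{Lyapunov-matrix-equation} as a single $\mathbb{C}$-linear operator and then invoke the standard affine structure of solution sets of linear equations. Concretely, I would introduce the \emph{Lyapunov operator} $L_A : \mathcal{M}_r(\mathbb{C}) \to \mathcal{M}_r(\mathbb{C})$ defined by $L_A(X) = A^H X + XA$. The first thing to check is that $L_A$ is indeed $\mathbb{C}$-linear: for $X, Y \in \mathcal{M}_r(\mathbb{C})$ and $\lambda, \mu \in \mathbb{C}$ we have $L_A(\lambda X + \mu Y) = A^H(\lambda X + \mu Y) + (\lambda X + \mu Y)A = \lambda L_A(X) + \mu L_A(Y)$, which follows at once from bilinearity of matrix multiplication over $\mathbb{C}$ and the distributive law. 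With this notation, equation \eqref{Lyapunov-matrix-equation} reads $L_A(X) = -C$, and its homogeneous companion \eqref{Lyapunov-matrix-equation-zero} reads $L_A(X) = O_r$.

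For part (i), I would observe that $\mathcal{L}_{A,O_r}$ is precisely the kernel $\ker L_A = \{ X \in \mathcal{M}_r(\mathbb{C}) : L_A(X) = O_r \}$. Since $L_A(O_r) = A^H O_r + O_r A = O_r$, the zero matrix belongs to $\mathcal{L}_{A,O_r}$, giving the first assertion. Because the kernel of any $\mathbb{C}$-linear operator on a complex vector space is a complex subspace, $\mathcal{L}_{A,O_r}$ is closed under addition and scalar multiplication and hence is a complex vectorial subspace of $\mathcal{M}_r(\mathbb{C})$; I would spell this out explicitly if $X, Y \in \mathcal{L}_{A,O_r}$ and $\lambda \in \mathbb{C}$ then $L_A(\lambda X + Y) = \lambda L_A(X) + L_A(Y) = O_r$, so $\lambda X + Y \in \mathcal{L}_{A,O_r}$.

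For part (ii), I would argue by double inclusion. Given a particular solution $Y_C$, so that $L_A(Y_C) = -C$, take any $X \in \mathcal{L}_{A,C}$; then $L_A(X - Y_C) = L_A(X) - L_A(Y_C) = -C - (-C) = O_r$, so $X - Y_C \in \mathcal{L}_{A,O_r}$ and therefore $X \in Y_C + \mathcal{L}_{A,O_r}$. Conversely, for any $Z \in \mathcal{L}_{A,O_r}$ we have $L_A(Y_C + Z) = L_A(Y_C) + L_A(Z) = -C + O_r = -C$, so $Y_C + Z \in \mathcal{L}_{A,C}$. The two inclusions give the claimed equality $\mathcal{L}_{A,C} = Y_C + \mathcal{L}_{A,O_r}$.

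I do not anticipate a genuine obstacle here, as the result is a direct instance of the elementary theory of linear maps; the only thing that must be handled with a little care is confirming the $\mathbb{C}$-linearity of $L_A$ despite the presence of the conjugate transpose $A^H$ (note that $A^H$ is a fixed matrix, so left multiplication by it is $\mathbb{C}$-linear in $X$ even though the operation $A \mapsto A^H$ is only conjugate-linear). Once linearity is secured, both statements reduce to the standard facts that $\ker L_A$ is a subspace and that the preimage $L_A^{-1}(-C)$, when nonempty, is a coset of that kernel.
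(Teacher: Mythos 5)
Your proof is correct and is exactly the routine argument the paper has in mind: the paper states this proposition without proof (remarking only that it is ``easy to observe''), and your verification via the $\mathbb{C}$-linear operator $L_A(X)=A^HX+XA$, with $\mathcal{L}_{A,O_r}=\ker L_A$ and the coset argument for part (ii), is the standard way to supply it. Your parenthetical point that linearity holds because $A^H$ is a \emph{fixed} matrix (so no conjugate-linearity issue arises) is a nice touch of care.
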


In some important situations the matrix $C$ is supposed to be a Hermitian matrix or real symmetric matrix. 
We note by $\mathcal{L}^{H}_{A,C}$ (respectively $\mathcal{L}^{sym}_{A,C}$) the set of Hermitian (respectively real symmetric) solutions of the equation \eqref{Lyapunov-matrix-equation}. 

\begin{prop}\label{Hermitian-solutions-abc}
 (i) If $\mathcal{L}^{H}_{A,C}\neq \emptyset$, then $C$ is a Hermitian matrix.

\noindent (ii) $O_{r}\in \mathcal{L}^{H}_{A,O_{r}}$ and 
$\mathcal{L}^{H}_{A,O_{r}}\subseteq \mathcal{L}_{A,O_{r}}$ is a real vectorial space.

\noindent (iii) Suppose that  $C$ is a Hermitian matrix. 

(iii.1) If $X\in \mathcal{L}_{A,C}$, then $X^H\in \mathcal{L}_{A,C}$.

(iii.2) If $X\in \mathcal{L}_{A,C}$, then $X^h=\frac{1}{2}\left(X+X^H\right)\in \mathcal{L}^{H}_{A,C}$.

(iii.3) If $Y_C$ is a Hermitian solution of \eqref{Lyapunov-matrix-equation}, then $\mathcal{L}^{H}_{A,C}=Y_C+\mathcal{L}^{H}_{A,O_{r}}$.
\end{prop}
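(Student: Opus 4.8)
The plan is to prove the five assertions of Proposition \ref{Hermitian-solutions-abc} by direct computation with the Hermitian adjoint, exploiting the algebraic structure of the left-hand side of \eqref{Lyapunov-matrix-equation}. The central observation throughout is that the map $\mathcal{T}(X)=A^HX+XA$ is a \emph{self-adjoint} operator on $\mathcal{M}_r(\mathbb{C})$ in the sense that it commutes with the conjugate-transpose: taking Hermitian adjoints, $\left(A^HX+XA\right)^H=A^HX^H+X^HA=\mathcal{T}(X^H)$, using $(A^H)^H=A$ and $(AB)^H=B^HA^H$. Almost every item below is a corollary of this single identity.

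First I would treat (i). If $X\in\mathcal{L}^{H}_{A,C}$, then $X=X^H$ and $A^HX+XA=-C$. Applying the Hermitian adjoint to this equation and using $X^H=X$ gives $A^HX+XA=-C^H$, whence $C=C^H$, so $C$ is Hermitian. For (ii), that $O_r$ solves the homogeneous equation and is Hermitian is immediate; the real-linear structure follows because $\mathcal{L}^{H}_{A,O_r}=\mathcal{L}_{A,O_r}\cap\{X:X=X^H\}$ is the intersection of the complex vector space $\mathcal{L}_{A,O_r}$ from Proposition \ref{affine-property-general-case}(i) with the set of Hermitian matrices, which is a real (not complex) subspace since $(\lambda X)^H=\overline{\lambda}X^H$ forces real scalars to preserve Hermiticity. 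The inclusion $\mathcal{L}^{H}_{A,O_r}\subseteq\mathcal{L}_{A,O_r}$ is definitional.

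For (iii) I would assume $C=C^H$ throughout. For (iii.1), starting from $A^HX+XA=-C$ and applying the adjoint identity above yields $A^HX^H+X^HA=-C^H=-C$, so $X^H\in\mathcal{L}_{A,C}$. Statement (iii.2) then follows by averaging: since $\mathcal{L}_{A,C}$ is an affine space (Proposition \ref{affine-property-general-case}(ii)) and both $X$ and $X^H$ lie in it, the convex combination $X^h=\tfrac12(X+X^H)$ lies in $\mathcal{L}_{A,C}$ as well, and it is manifestly Hermitian since $(X^h)^H=\tfrac12(X^H+X)=X^h$; hence $X^h\in\mathcal{L}^{H}_{A,C}$. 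Finally (iii.3) is the Hermitian analogue of Proposition \ref{affine-property-general-case}(ii): given a Hermitian particular solution $Y_C$, any $X\in\mathcal{L}^{H}_{A,C}$ satisfies $X-Y_C\in\mathcal{L}_{A,O_r}$ and is a difference of two Hermitian matrices, hence Hermitian, so $X-Y_C\in\mathcal{L}^{H}_{A,O_r}$; conversely adding any element of $\mathcal{L}^{H}_{A,O_r}$ to $Y_C$ produces a Hermitian solution.

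None of these steps presents a genuine obstacle—each is a short manipulation with the adjoint—so the only point requiring mild care is the scalar-field distinction in (ii): one must verify that multiplication by a real scalar preserves Hermiticity while complex scalars need not, which is exactly why $\mathcal{L}^{H}_{A,O_r}$ is asserted to be a \emph{real} vector space even though $\mathcal{L}_{A,O_r}$ is complex. I would state this explicitly to justify the word ``real'' in the claim.
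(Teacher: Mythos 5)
Your proof is correct. The paper gives no proof of Proposition \ref{Hermitian-solutions-abc} at all --- it is stated as an immediate observation, like Proposition \ref{affine-property-general-case} before it --- and your argument (the adjoint identity $\left(A^HX+XA\right)^H=A^HX^H+X^HA$ applied item by item, together with the affine structure of $\mathcal{L}_{A,C}$ from Proposition \ref{affine-property-general-case}) is exactly the routine verification the author leaves to the reader, including the correct attention to why $\mathcal{L}^{H}_{A,O_{r}}$ is only a \emph{real} vector space.
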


\begin{prop}\label{symmetric-solutions-abc}
Suppose that $A,C$ are real matrices.  

\noindent (i) If $X\in \mathcal{L}_{A,C}$, then $\emph{Re}(X)\in \mathcal{L}_{A,C}$.

\noindent (ii) If $\mathcal{L}^{sym}_{A,C}\neq \emptyset$, then $C$ is a real symmetric matrix.

\noindent (iii) $O_r\in \mathcal{L}^{sym}_{A,O_r}$ and  $\mathcal{L}^{sym}_{A,O_r}\subseteq \mathcal{L}^{H}_{A,O_r}$ is a real vectorial space.

\noindent (iv) Suppose that $C$ is a real symmetric matrix. 

(iv.1) If $X\in \mathcal{L}_{A,C}$ is a real matrix, then $X^T\in \mathcal{L}_{A,C}$.

(iv.2) If $X\in \mathcal{L}_{A,C}$ is a real matrix, then $X^s=\frac{1}{2}\left(X+X^T\right)\in \mathcal{L}^{sym}_{A,C}$.

(iv.3)  If $Z_C$ is a real symmetric solution of \eqref{Lyapunov-matrix-equation}, then $\mathcal{L}^{sym}_{A,C}=Z_C+\mathcal{L}^{sym}_{A,O_r}.$
\end{prop}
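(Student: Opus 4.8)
The plan is to mirror the argument of Proposition \ref{Hermitian-solutions-abc}, replacing the conjugate transpose by the transpose throughout and exploiting the identity $A^H=A^T$, which holds precisely because $A$ is assumed real. Every assertion then follows from three elementary operations applied to the defining equation $A^TX+XA+C=O_r$: splitting a matrix into real and imaginary parts, transposing the equation, and forming real linear combinations of solutions. The affine and vector-space structure is already supplied by Proposition \ref{affine-property-general-case}.

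For (i) I would write $X=\emph{Re}(X)+i\,\emph{Im}(X)$ and substitute into $A^TX+XA+C=O_r$. Since $A$ and $C$ are real, the real and imaginary parts of the left-hand side separate, and equating each to $O_r$ gives $A^T\emph{Re}(X)+\emph{Re}(X)A+C=O_r$, that is, $\emph{Re}(X)\in\mathcal{L}_{A,C}$ (and incidentally $\emph{Im}(X)\in\mathcal{L}_{A,O_r}$). For (ii), starting from a real symmetric solution $X$, I would transpose $A^TX+XA+C=O_r$; using $X^T=X$ this yields $A^TX+XA+C^T=O_r$, and comparison with the original equation forces $C=C^T$, so the already-real matrix $C$ is symmetric. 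Part (iii) is then immediate: $O_r$ is trivially a real symmetric solution of the homogeneous equation, every real symmetric matrix is Hermitian (for real $X$ one has $X^H=X^T=X$, whence $\mathcal{L}^{sym}_{A,O_r}\subseteq\mathcal{L}^{H}_{A,O_r}$), and real linear combinations of real symmetric homogeneous solutions are again real symmetric homogeneous solutions by linearity, so $\mathcal{L}^{sym}_{A,O_r}$ is a real vectorial space.

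For (iv) I assume $C$ real symmetric. Transposing $A^TX+XA+C=O_r$ with $C^T=C$ and $A^H=A^T$ gives $A^TX^T+X^TA+C=O_r$, which is (iv.1). Then $X^s=\frac{1}{2}\left(X+X^T\right)$ solves the equation by averaging the two solutions $X$ and $X^T$, and it is visibly real symmetric, giving (iv.2). Finally (iv.3) is the affine statement: if $Z_C$ is a fixed real symmetric solution, then for any real symmetric solution $X$ the difference $X-Z_C$ is real symmetric and solves the homogeneous equation, so $X-Z_C\in\mathcal{L}^{sym}_{A,O_r}$; conversely $Z_C+W$ with $W\in\mathcal{L}^{sym}_{A,O_r}$ is real symmetric and solves \eqref{Lyapunov-matrix-equation}. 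This is the same reasoning as in Proposition \ref{affine-property-general-case}(ii) and Proposition \ref{Hermitian-solutions-abc}(iii.3).

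The work here is entirely routine; the only point requiring care, rather than a genuine obstacle, is the consistent use of $A^H=A^T$ for real $A$, which is what keeps the transpose of the equation inside the same Lyapunov form and makes the transpose play exactly the role the conjugate transpose played in the Hermitian case. Everything else is bookkeeping with real and imaginary parts and with linearity of the homogeneous equation.
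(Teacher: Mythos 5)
Your proof is correct, and it is precisely the routine verification the paper leaves implicit: the paper states this proposition without proof (as with Propositions \ref{affine-property-general-case} and \ref{Hermitian-solutions-abc}, it is treated as an easy observation), and your argument---using $A^H=A^T$ for real $A$, splitting into real and imaginary parts, transposing the equation, and invoking the affine structure---is the intended one. No gaps.
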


The existence of an invertible solution of the homogeneous equation \eqref{Lyapunov-matrix-equation-zero} is equivalent with the fact that matrices $A^H$ and $-A$ are similar.

In some cases we can simplify the Lyapunov matrix equation by using a matrix  similar with the matrix $A$.

\begin{prop}\label{equivalence-Jordan-102}
Let $B\in \mathcal{M}_r(\mathbb{C})$ be a matrix  similar  with $A$ via the invertible matrix $P\in \mathcal{M}_r(\mathbb{C})$ (i.e. $B=P^{-1}AP$) and $D=P^HCP$. We construct the Lyapunov matrix equation
\begin{equation}\label{Lyapunov-matrix-equation-Z}
B^HZ+ZB+D=O_r.
\end{equation}
The matrix $X$ is a solution of \eqref{Lyapunov-matrix-equation} if and only if $Z=P^HXP$ is a solution of \eqref{Lyapunov-matrix-equation-Z}.
\end{prop}

\begin{proof}
We have 
\begin{align*}
& A^HX+XA+C=O_r \Leftrightarrow P^{-H}B^HP^HX+XPBP^{-1}+C=O_r \\
 \Leftrightarrow & B^HP^HX+P^HXPBP^{-1}+P^HC=O_r \\
  \Leftrightarrow & B^HP^HXP+P^HXPB+P^HCP=O_r 
\Leftrightarrow  B^HZ+ZB+D=O_r.\qed
\end{align*}
\end{proof}

We have the following properties:

\noindent $\bullet$ The equation  \eqref{Lyapunov-matrix-equation} is homogeneous if and only if the equation  \eqref{Lyapunov-matrix-equation-Z} is homogeneous.

\noindent $\bullet$  $C$ is a Hermitian matrix if and only if $D=P^HCP$ is a Hermitian matrix.

\noindent $\bullet$ If $P$ is a real matrix, then $C$ is a real symmetric matrix if and only if $D$ is a real symmetric matrix.

\section{Sylvester-Lyapunov matrix equations with Jordan block matrices}\label{section-Jordan block matrices}

In this section we completely solve { the Sylvester-Lyapunov matrix equation with Jordan block matrices}, a matrix equation of the form:
\begin{equation}\label{Sylvester-Jordan-block}
J_r(\lambda)^HX+XJ_s(\mu)+C=O_{r,s},
\end{equation}
where $J_r(\lambda)\in \mathcal{M}_r(\mathbb{C}),\, J_s(\mu)\in \mathcal{M}_s(\mathbb{C})$ are Jordan block matrices, $C\in \mathcal{M}_{r,s}(\mathbb{C})$ and the unknown matrix $X\in \mathcal{M}_{r,s}(\mathbb{C})$.
We denote by $\mathcal{L}(\lambda,\mu,C)$ the set of solutions of \eqref{Sylvester-Jordan-block}. $\mathcal{L}^H(\lambda,\mu,C)$ is the set of Hermitian solutions and $\mathcal{L}^{sym}(\lambda,\mu,C)$ is the set of real symmetric solutions.

\begin{rem}
In the paper \cite{gantmacher} it is studied the following matrix equation:
\begin{equation}
J_r(\lambda )X+XJ_s(\mu )+C=O_{r,s},
\end{equation}
which is different from \eqref{Sylvester-Jordan-block}.
\end{rem}

\begin{prop}\label{Prop-Sylvester-multiplu}
 (i) If $\alpha\in \mathbb{C}^*$ and $C\in \mathcal{M}_{r,s}(\mathbb{C})$, then $\mathcal{L}(\lambda,\mu,\alpha C)=\alpha \mathcal{L}(\lambda,\mu,C).$

\noindent (ii) If $C,C_1,\dots, C_q\in \mathcal{M}_{r,s}(\mathbb{C})$ such that $C=\sum\limits_{i=1}^qC_i$, and $\mathcal{L}(\lambda,\mu,C_1)\neq \emptyset$, ... , $\mathcal{L}(\lambda,\mu,C_q)\neq \emptyset$, then 
$\mathcal{L}(\lambda,\mu,C)=\sum\limits_{i=1}^q\mathcal{L}(\lambda,\mu,C_i).$

\noindent (iii) If $\nu=\lambda+\overline{\mu}$, then the equation \eqref{Sylvester-Jordan-block} is equivalent with the matrix equation
\begin{equation}\label{Sylvester-Jordan-block-particular}
J_r(\nu)^HX+XJ_s(0)+C=O_{r,s},
\end{equation}
 i.e. $\mathcal{L}(\lambda,\mu,C)=\mathcal{L}(\nu,0,C)$.
\end{prop}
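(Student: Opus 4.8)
The three parts of this proposition are of quite different natures, so I would handle them separately.

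For parts (i) and (ii), the plan is to exploit the fact that the map $X \mapsto J_r(\lambda)^H X + X J_s(\mu)$ is $\mathbb{C}$-linear in $X$. For (i), if $X \in \mathcal{L}(\lambda,\mu,C)$ then multiplying the equation $J_r(\lambda)^H X + X J_s(\mu) + C = O_{r,s}$ by $\alpha$ shows that $\alpha X$ solves the equation with right-hand term $\alpha C$, giving $\alpha \mathcal{L}(\lambda,\mu,C) \subseteq \mathcal{L}(\lambda,\mu,\alpha C)$; the reverse inclusion follows by applying the same argument with $\alpha^{-1}$ (here $\alpha \neq 0$ is essential). For (ii), given solutions $X_i \in \mathcal{L}(\lambda,\mu,C_i)$, summing the corresponding equations and using linearity shows $\sum_i X_i \in \mathcal{L}(\lambda,\mu,C)$, which gives one inclusion. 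For the reverse inclusion, I would take any $X \in \mathcal{L}(\lambda,\mu,C)$, fix particular solutions $X_2,\dots,X_q$ of the equations with data $C_2,\dots,C_q$ (which exist by the nonemptiness hypothesis), and set $X_1 = X - \sum_{i=2}^q X_i$; linearity then forces $X_1 \in \mathcal{L}(\lambda,\mu,C_1)$, realizing $X$ as an element of the sum. This is the standard argument that the solution set of a linear equation, when nonempty, is a coset of the homogeneous solution space.

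For part (iii), the key observation is the algebraic identity relating the two operators. Writing $J_r(\lambda) = \lambda I_r + N_r$ and $J_s(\mu) = \mu I_s + N_s$ where $N_r, N_s$ are the nilpotent shift parts, I compute
\begin{align*}
J_r(\lambda)^H X + X J_s(\mu) &= (\bar{\lambda} I_r + N_r^H) X + X (\mu I_s + N_s) \\
&= N_r^H X + X N_s + (\bar{\lambda} + \mu) X.
\end{align*}
Comparing with $J_r(\nu)^H X + X J_s(0) = (\bar{\nu} I_r + N_r^H)X + X N_s = N_r^H X + X N_s + \bar{\nu} X$, the two operators coincide exactly when $\bar{\nu} = \bar{\lambda} + \mu$, i.e. when $\nu = \lambda + \bar{\mu}$, which is precisely the stated hypothesis. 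Since the linear operators acting on $X$ are literally equal, the equation \eqref{Sylvester-Jordan-block} and the equation \eqref{Sylvester-Jordan-block-particular} have identical left-hand sides, hence identical solution sets, establishing $\mathcal{L}(\lambda,\mu,C) = \mathcal{L}(\nu,0,C)$.

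I do not anticipate a serious obstacle here; the whole proposition is a collection of elementary consequences of linearity and one short matrix computation. The only point requiring genuine care is checking the conjugation bookkeeping in part (iii): one must take the Hermitian transpose of $J_r(\lambda)$ correctly (so that the diagonal entry becomes $\bar{\lambda}$ and the superdiagonal nilpotent part becomes the subdiagonal $N_r^H$), and then verify that the scalar terms match as $\bar{\lambda} + \mu = \bar{\nu}$ rather than with some other combination of conjugates. Getting the direction of the conjugation right is what pins down the formula $\nu = \lambda + \bar{\mu}$, and this is the single step worth double-checking before declaring the operators equal.
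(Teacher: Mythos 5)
Your proposal is correct and follows essentially the same route as the paper: the paper's entire proof is the one-line remark that part (iii) follows from the relation $J_r(\lambda)=\lambda I_r+J_r(0)$, which is exactly your scalar-plus-nilpotent decomposition (with the conjugation bookkeeping $\bar{\nu}=\bar{\lambda}+\mu$ done correctly), while parts (i) and (ii) are the standard linearity arguments the paper leaves to the reader.
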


\begin{proof}
To prove $(iii)$ we use the relation \eqref{Jordan-block-zero-relation}. \qed
\end{proof}
\medskip

The set of matrices 
$\{{\bf e}_{\alpha}\otimes {\bf f}^T_{\beta}\in \mathcal{M}_{r,s}(\mathbb{C})\,|\,\alpha=\overline{1,r},\,\beta=\overline{1,s}\}$
is a basis of the complex vectorial space $\mathcal{M}_{r,s}(\mathbb{C})$, where ${\bf e}_1,\dots,{\bf e}_r$ is the canonical basis of $\mathbb{C}^r$ and ${\bf f}_1,\dots,{\bf f}_s$ is the canonical basis of $\mathbb{C}^s$.
The above proposition suggests to consider the homogeneous case and the cases when $C$ is an element of the above basis of $\mathcal{M}_{r,s}(\mathbb{C})$.\medskip

In what follows we study the equation \eqref{Sylvester-Jordan-block-particular}. This equation written  in components of $X$ is given by
\begin{equation}\label{Sylvester-recurenta}
\nu x_{ij}+x_ {i-1,j}+x_{i,j-1}+c_{ij}=0,\,\,\,\forall i\in \{1,\dots, r\},\,\forall j\in \{1,\dots, s\},
\end{equation}
where $x_{0j}=x_{i0}=0$.

\subsection{The homogeneous case}

In this case we have $C=O_{r,s}$ and the equation \eqref{Sylvester-Jordan-block-particular} becomes
\begin{equation}\label{Sylvester-Jordan-block-particular-zero}
J_r(\nu)^HX+XJ_s(0)=O_{r,s}.
\end{equation}
\begin{thm}\label{homogeneous-Jordan-block-F101} 
(i) If $\nu\neq 0$, then $\mathcal{L}(\nu,0,O_{r,s})=\{O_{r,s}\}$. 

\noindent (ii) $\mathcal{L}(0,0,O_{r,s})$ is a complex vectorial space with $\dim_{\mathbb{C}}\mathcal{L}(0,0,O_{r,s})=\min\{r,s\}$.
A solution $X$ of \eqref{Sylvester-Jordan-block-particular-zero} has the components:
\begin{align*}
x_{ij}= & 
\begin{cases}
0 & i+j\leq s \\
(-1)^{s-j}p_{i+j-s} & i+j>s
\end{cases}\,{if}\,r\leq s; \\
\,x_{ij}= &
\begin{cases}
0 & i+j\leq r \\
(-1)^{r-i}p_{i+j-r} & i+j>r
\end{cases}\,{if}\,r>s,
\end{align*}
where $p_1,\dots,p_{\min\{r,s\}}\in \mathbb{C}$. We can write, see \eqref{Yt-formula} and \eqref{Y-identity-98},
\begin{equation}
X=\sum\limits_{t=1}^{\min\{r,s\}}p_t \mathcal{Y}_t^{[r,s]}=
\begin{cases}
\left(\sum\limits_{t=1}^{r}p_t(J_r^T(0))^{t-1}\right)\mathcal{Y}^{[r,s]} & {if}\,\,\,r\leq s \\
\mathcal{Y}^{[r,s]}\left(\sum\limits_{t=1}^{s}p_t(J_s(0))^{t-1}\right) & {if}\,\,\,r> s
\end{cases}
,
\end{equation}
where the set of matrices $\left\{\mathcal{Y}_1^{[r,s]},\dots, \mathcal{Y}_{\min\{r,s\}}^{[r,s]}\right\}$ is a basis of $\mathcal{L}(0,0,O_{r,s})$.
\end{thm}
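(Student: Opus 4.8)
The plan is to pass to components and study the scalar recurrence \eqref{Sylvester-recurenta} with $c_{ij}=0$, namely $\nu x_{ij}+x_{i-1,j}+x_{i,j-1}=0$ for all $i\in\{1,\dots,r\}$, $j\in\{1,\dots,s\}$, subject to the boundary convention $x_{0j}=x_{i0}=0$. The whole theorem reduces to solving this linear system, and the natural quantity to induct on is the anti-diagonal index $k=i+j$, since the two feedback terms $x_{i-1,j}$ and $x_{i,j-1}$ both sit on anti-diagonal $k-1$.

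For part (i) I would argue by induction on $k=i+j$. The base case $k=2$ forces $\nu x_{11}=0$, hence $x_{11}=0$ because $\nu\neq0$. For the inductive step, every entry $x_{ij}$ with $i+j=k$ satisfies $\nu x_{ij}=-x_{i-1,j}-x_{i,j-1}$, and the two right-hand entries either vanish by the boundary convention or lie on anti-diagonal $k-1$ and hence vanish by the induction hypothesis; dividing by $\nu\neq0$ gives $x_{ij}=0$. This sweeps out the whole matrix and yields $\mathcal{L}(\nu,0,O_{r,s})=\{O_{r,s}\}$.

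For part (ii) the recurrence collapses to $x_{i-1,j}=-x_{i,j-1}$. The key observation is that $x_{i-1,j}$ and $x_{i,j-1}$ lie on the same anti-diagonal $a+b=i+j-1$, so the system only couples neighbouring entries within a single anti-diagonal and forces consecutive entries there to be negatives of one another; thus each anti-diagonal of $X$ carries at most one free scalar, its values being fixed up to an alternating sign. The content is then a boundary-condition bookkeeping: I would track, for each anti-diagonal $d=i+j$, whether the sign-alternation chain reaches a forced zero coming from the phantom row $i=0$ or phantom column $j=0$. A short case analysis (treating $r\le s$ and $r>s$ symmetrically) shows that the short anti-diagonals $d\le s$ (resp. $d\le r$) are pinned to zero, while each of the $\min\{r,s\}$ long anti-diagonals closest to the corner $(r,s)$ retains exactly one free parameter $p_{d-s}$ (resp. $p_{d-r}$). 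Reading off the alternating sign along such an anti-diagonal produces the claimed closed form $x_{ij}=(-1)^{s-j}p_{i+j-s}$ (resp. $(-1)^{r-i}p_{i+j-r}$).

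It then remains to package this cleanly. I would first verify directly that the displayed $x_{ij}$ solves $x_{i-1,j}+x_{i,j-1}=0$: on a long anti-diagonal the two terms are $(-1)^{s-j}p_{i+j-1-s}$ and $(-1)^{s-(j-1)}p_{i+j-1-s}$, which cancel, and on the short region both terms vanish, confirming that the $\min\{r,s\}$-parameter family is genuinely contained in the solution set. Independence of the parameters, hence $\dim_{\mathbb C}\mathcal{L}(0,0,O_{r,s})=\min\{r,s\}$, follows since each $p_t$ already appears isolated, e.g.\ as $\pm x_{r,\,s+t-r}$, in a distinct matrix entry. Finally, collecting the coefficient of each $p_t$ gives the basis matrices $\mathcal{Y}_t^{[r,s]}$, and after recognising the shift action of powers of $J_r^T(0)$ (resp.\ $J_s(0)$) on the anti-diagonal pattern $\mathcal{Y}^{[r,s]}$ one obtains the compact factorised formula. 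The main obstacle is purely the boundary bookkeeping in part (ii)---correctly identifying which anti-diagonals are killed and pinning down the global sign $(-1)^{s-j}$---rather than any conceptual difficulty; once the anti-diagonal decoupling is isolated, everything else is routine verification.
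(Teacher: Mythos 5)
Your proposal is correct and takes essentially the same route as the paper: both pass to the componentwise recurrence \eqref{Sylvester-recurenta}, prove (i) by induction (yours organized along anti-diagonals $i+j$, the paper's stated tersely as ``induction''), and establish (ii) by showing the boundary equations force the entries with $i+j\le s$ (resp.\ $i+j\le r$) to vanish while each remaining anti-diagonal carries exactly one free parameter propagated with alternating sign. The only cosmetic difference is that the paper disposes of the case $r>s$ by passing to the conjugate-transposed equation $J_s(0)^HX^H+X^HJ_r(0)=O_{s,r}$, whereas you handle both cases by the same symmetric bookkeeping.
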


\begin{proof}
$(i)$ We use the relations \eqref{Sylvester-recurenta} and mathematical induction.

$(ii)$ First we suppose that $r\leq s$.
We choose $i=1$ in \eqref{Sylvester-recurenta} and obtain that $x_{11}=\dots=x_{1,s-1}=0$. By mathematical induction $x_{i,j}=0$ for $i+j\leq s$. For an index $i$ we set $x_{is}=p_i$. By induction we obtain that $x_{i+j,s-j}=(-1)^j p_{i}$ for $j\in \{1,\dots,r-i\}$. All these matrices are solutions for \eqref{Sylvester-Jordan-block-particular-zero}. We observe that the set of solutions is a vectorial space with the dimension $r=\min\{r,s\}$.

For the case $r>s$  we can consider the equivalent problem 
\begin{equation*}
J_s(0)^HX^H+X^HJ_r(0)=O_{s,r}
\end{equation*}
and using the above results we find the announced results. \qed
\end{proof}

\begin{cor}\label{solutie-patrat-1034}
For $\nu=0$, $r=s$
a solution of the equation \eqref{Sylvester-Jordan-block-particular-zero} has the form 
\begin{equation}\label{homogeneous-solutions-patrat}
X=\begin{pmatrix}
0 & 0 & \dots & 0 & 0 & p_{1} \\
0 & 0 & \dots & 0 & -p_{1} & p_{2} \\
0 & 0 & \dots & p_{1} & -p_{2} & p_{3} \\
\vdots & \vdots & \ddots & \vdots & \vdots & \vdots \\
(-1)^{r-1}p_{1} & (-1)^{r-2}p_{2} & \dots & p_{r-2} & -p_{r-1} & p_{r} 
\end{pmatrix},
\end{equation}
where $p_1,\dots,p_r\in \mathbb{C}$.
The matrix $X$
is invertible if and only if $p_1\neq 0$.
\end{cor}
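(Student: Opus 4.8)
The plan is to read the explicit matrix \eqref{homogeneous-solutions-patrat} directly off the specialization $s=r$ of Theorem \ref{homogeneous-Jordan-block-F101}(ii), and then to settle the invertibility criterion by a determinant computation that exploits the anti-triangular shape of $X$.

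First I would set $s=r$ in the $r\leq s$ branch of Theorem \ref{homogeneous-Jordan-block-F101}(ii), so that a solution of \eqref{Sylvester-Jordan-block-particular-zero} has entries $x_{ij}=0$ for $i+j\leq r$ and $x_{ij}=(-1)^{r-j}p_{i+j-r}$ for $i+j>r$. Writing these out row by row reproduces the displayed matrix; in particular the entries vanish strictly above the anti-diagonal $i+j=r+1$, so $X$ is anti-triangular, and along the anti-diagonal itself (where $j=r+1-i$) the entries are $x_{i,r+1-i}=(-1)^{i-1}p_1$, which alternate in sign as claimed.

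For the invertibility statement I would compute $\det X$ from its Leibniz expansion. A product $\prod_i x_{i\sigma(i)}$ indexed by a permutation $\sigma$ can be nonzero only if $\sigma(i)\geq r+1-i$ for every $i$; summing these inequalities and using $\sum_i\sigma(i)=\sum_i(r+1-i)=r(r+1)/2$ forces equality throughout, so the unique surviving permutation is the reversal $\sigma_0(i)=r+1-i$. Hence $\det X=\mathrm{sgn}(\sigma_0)\prod_{i=1}^r x_{i,r+1-i}$. Since $\mathrm{sgn}(\sigma_0)=(-1)^{r(r-1)/2}$ (the reversal has $\binom{r}{2}$ inversions) and $\prod_{i=1}^r(-1)^{i-1}=(-1)^{r(r-1)/2}$, the two signs cancel and $\det X=p_1^r$. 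Therefore $X$ is invertible if and only if $p_1\neq 0$.

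The computation is essentially routine; the only delicate point is the sign bookkeeping, namely checking that the inversion count of $\sigma_0$ and the accumulated alternating signs along the anti-diagonal combine to give exactly $p_1^r$ (with $r(r-1)$ even, so the overall sign is $+1$). As a quick sanity check for the implication $p_1=0\Rightarrow X$ singular, I would note that when $p_1=0$ the only possibly nonzero entry of the first row, namely $x_{1r}=p_1$, vanishes, so the first row is zero and $\det X=0$ at once.
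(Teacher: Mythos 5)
Your proposal is correct and follows the same route as the paper: the paper's proof simply observes that $\det(X)=p_1^r$ and concludes, which is exactly the identity you establish. Your Leibniz-expansion argument (unique surviving permutation $\sigma_0(i)=r+1-i$, with the inversion count $(-1)^{r(r-1)/2}$ cancelling the alternating anti-diagonal signs) just supplies the sign bookkeeping that the paper leaves implicit.
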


\begin{proof}
We observe that $\det(X)=p_1^r$ and we obtain the announced result. \qed
\end{proof}

\begin{cor}\label{Lyapunov-general-invertible-101-hermitian}
The matrix $X\in \mathcal{L}^H(0,0,O_{r})$
has the form
$$X=\sum\limits_{t=1}^{r}u_t \mathfrak{i}^t\mathcal{Y}_t^{[r,r]}\,\,{if}\,\,r\in 2\mathbb{N}\,\,{and}\,\,X=\sum\limits_{t=1}^{r}u_t \mathfrak{i}^{t-1}\mathcal{Y}_t^{[r,r]}\,\,{if}\,\,r\in 2\mathbb{N}+1,$$
where $u_1,\dots,u_r$ are real parameters.
$\mathcal{L}^H(0,0,O_r)$  is a real vectorial space with $\dim_{\mathbb{R}}\mathcal{L}^H(0,0,O_r)=r$. The matrix $X$ is invertible if and only if $u_1\neq 0$.

The set $\mathcal{L}^H(0,0,O_r)$ contains a positive definite matrix if and only if $r=1$.
\end{cor}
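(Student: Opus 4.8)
The plan is to start from the explicit parametrization of the general square homogeneous solution given in Theorem \ref{homogeneous-Jordan-block-F101} and Corollary \ref{solutie-patrat-1034}, writing $X=\sum_{t=1}^{r}p_t\mathcal{Y}_t^{[r,r]}$ with entries $x_{ij}=(-1)^{r-i}p_{i+j-r}$ for $i+j>r$ and $x_{ij}=0$ otherwise, and then to carve out the Hermitian solutions by imposing $x_{ij}=\overline{x_{ji}}$.

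First I would translate the Hermitian constraint into conditions on the parameters $p_t$. For a pair $(i,j)$ with $i+j>r$, setting $t=i+j-r$ and using $x_{ji}=(-1)^{r-j}p_t$, a short computation reduces $x_{ij}=\overline{x_{ji}}$ to the single relation $p_t=(-1)^{t+r}\,\overline{p_t}$ for each $t\in\{1,\dots,r\}$. Thus $p_t$ is forced to be real when $t$ and $r$ have the same parity and purely imaginary when they have opposite parity; these $r$ independent real constraints already show that $\mathcal{L}^H(0,0,O_r)$ is a real vector space of real dimension $r$.

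Next I would package these constraints into the stated closed form. When $r$ is even, $\mathfrak{i}^t$ is real for even $t$ and purely imaginary for odd $t$, which is exactly the required parity pattern, so $p_t=u_t\,\mathfrak{i}^t$ with $u_t\in\mathbb{R}$ produces precisely all admissible $p_t$; when $r$ is odd the factor $\mathfrak{i}^{t-1}$ plays the same role, giving $p_t=u_t\,\mathfrak{i}^{t-1}$. For the invertibility statement I would invoke Corollary \ref{solutie-patrat-1034}, by which $X$ is invertible iff $p_1\neq 0$; since $p_1=u_1\,\mathfrak{i}$ for $r$ even and $p_1=u_1$ for $r$ odd, this is equivalent to $u_1\neq 0$ in both cases.

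For the positive-definiteness claim I would look at the $(1,1)$ entry. When $r\geq 2$ the index satisfies $1+1=2\leq r$, hence $x_{11}=0$; since every Hermitian positive definite matrix has strictly positive diagonal entries, no element of $\mathcal{L}^H(0,0,O_r)$ can be positive definite. When $r=1$ the space reduces to the real scalars $X=(u_1)$, which is positive definite precisely when $u_1>0$, so a positive definite solution exists exactly in this case. The only genuinely delicate step will be the sign-and-parity bookkeeping in deriving $p_t=(-1)^{t+r}\,\overline{p_t}$ and matching it against the powers of $\mathfrak{i}$ in the two parity cases; the remaining steps are routine.
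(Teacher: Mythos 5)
Your proof is correct and takes essentially the paper's route: the paper itself proves only the positive-definiteness claim---by exactly your argument that $x_{11}=0$ forces ${\bf e}_1^HX{\bf e}_1=0$ when $r>1$---and leaves the parametrization, dimension, and invertibility parts to follow from Theorem \ref{homogeneous-Jordan-block-F101} and Corollary \ref{solutie-patrat-1034}, which is precisely what you fill in. One harmless slip: for $r=s$ the paper's formula reads $x_{ij}=(-1)^{r-j}p_{i+j-r}$ rather than $(-1)^{r-i}p_{i+j-r}$, but the discrepancy is the real factor $(-1)^{r+t}$ multiplying each $p_t$, so your derived condition $p_t=(-1)^{t+r}\overline{p_t}$, the parity bookkeeping with powers of $\mathfrak{i}$, and all downstream conclusions are unaffected.
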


\begin{proof} We present the proof for the last affirmation.
For $r=1$ it easy to see that we have a positive definite matrix in $\mathcal{L}^H(0,0,O_r)$.
If $r>1$, then an element $X$ of $\mathcal{L}^H(0,0,O_r)$ has the component $x_{11}=0$. We have ${\bf e}^H_1X{\bf e}_1=x_{11}=0$, which implies that $X$ is not positive definite. \qed
\end{proof}

\begin{cor}\label{Lyapunov-general-invertible-1012-symmetric} 
The set $\mathcal{L}^{sym}(0,0,O_r)$ of the real symmetric solutions of the equation \eqref{Sylvester-Jordan-block-particular-zero} is a real vectorial space with  $\dim_{\mathbb{R}}\mathcal{L}^H(0,0,O_r)=\left[\frac{r+1}{2}\right]$\footnote{We use the floor function $[\,\cdot\,]$.}. A real symmetric solution has the form
$$X=\sum\limits_{t=1}^{\frac{r}{2}}u_{2t} \mathcal{Y}_{2t}^{[r,r]}\,\,{if}\,\,r\in 2\mathbb{N}\,\,{and}\,\,X=\sum\limits_{t=1}^{\frac{r}{2}}u_{2t} \mathcal{Y}_{2t}^{[r,r]}\,\,{if}\,\,r\in 2\mathbb{N}+1,$$
where $u_1,u_2,\dots,u_r\in \R$. $X$ is invertible if and only if $r\in 2\mathbb{N}+1$ and $u_1\neq 0$.
\end{cor}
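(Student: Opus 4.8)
The plan is to realise the real symmetric solutions as the real elements of the already-determined space of Hermitian solutions. By Proposition \ref{symmetric-solutions-abc}(iii) we have $\mathcal{L}^{sym}(0,0,O_r)\subseteq \mathcal{L}^H(0,0,O_r)$, and a Hermitian matrix is real symmetric precisely when all its entries are real; hence $\mathcal{L}^{sym}(0,0,O_r)=\mathcal{L}^H(0,0,O_r)\cap \mathcal{M}_r(\mathbb{R})$. So I would start from the explicit parametrisation of $\mathcal{L}^H(0,0,O_r)$ supplied by Corollary \ref{Lyapunov-general-invertible-101-hermitian} and simply cut out the real matrices.

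Next I would exploit that each basis matrix $\mathcal{Y}_t^{[r,r]}$ is real (its entries are $0$ and $\pm 1$, by Theorem \ref{homogeneous-Jordan-block-F101}) and each parameter $u_t$ is real. Therefore, writing $X=\sum_{t=1}^r u_t\,\mathfrak{i}^{t}\,\mathcal{Y}_t^{[r,r]}$ for $r$ even (respectively $X=\sum_{t=1}^r u_t\,\mathfrak{i}^{t-1}\,\mathcal{Y}_t^{[r,r]}$ for $r$ odd), the matrix $X$ is real if and only if each scalar coefficient $u_t\mathfrak{i}^{t}$ (respectively $u_t\mathfrak{i}^{t-1}$) is real. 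Since $\mathfrak{i}^k$ is real exactly when $k$ is even, this forces $u_t=0$ whenever $t$ is odd (for $r$ even) and whenever $t$ is even (for $r$ odd); in both regimes the surviving indices are precisely those $t\in\{1,\dots,r\}$ with $t\equiv r\pmod{2}$, and for those $t$ the coefficient reduces to $\pm u_t\in\mathbb{R}$, so $u_t$ stays free. This identifies $\{\mathcal{Y}_t^{[r,r]}\,:\,1\le t\le r,\ t\equiv r\pmod{2}\}$ as a linearly independent spanning set of real symmetric solutions.

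It then remains to count these surviving indices and settle invertibility. The number of $t\in\{1,\dots,r\}$ with $t\equiv r\pmod{2}$ equals $r/2$ when $r$ is even and $(r+1)/2$ when $r$ is odd, i.e. $\left[\frac{r+1}{2}\right]$ in both cases, which is the claimed dimension. For invertibility I would invoke Corollary \ref{solutie-patrat-1034}, by which a square homogeneous solution is invertible if and only if $p_1\neq 0$, where $p_1$ is the coefficient of $\mathcal{Y}_1^{[r,r]}$. The index $t=1$ survives the reality cut exactly when $1\equiv r\pmod{2}$, that is when $r$ is odd; in that case $p_1=u_1$, so $X$ is invertible if and only if $r\in 2\mathbb{N}+1$ and $u_1\neq 0$, whereas for $r$ even the coefficient $p_1$ is forced to vanish and every real symmetric solution is singular.

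There is no serious obstacle here; the content is bookkeeping. The one point requiring care is the parity accounting, namely matching ``$\mathfrak{i}^k$ real $\iff k$ even'' against the two different exponent conventions ($\mathfrak{i}^t$ for even $r$, $\mathfrak{i}^{t-1}$ for odd $r$) of Corollary \ref{Lyapunov-general-invertible-101-hermitian}, and then reconciling the resulting index set $\{t\equiv r\pmod{2}\}$ with the basis formula as displayed in the statement (which appears to contain a typographical slip, since the even-$r$ and odd-$r$ expressions should select $\mathcal{Y}_{2t}$ and $\mathcal{Y}_{2t-1}$ respectively). As a cross-check, one can instead derive the parity condition directly from the component formula $x_{ij}=(-1)^{r-i}p_{i+j-r}$ of Corollary \ref{solutie-patrat-1034}: imposing $x_{ij}=x_{ji}$ on the band $i+j=r+t$ yields $(-1)^{r-i}=(-1)^{r-j}$, equivalently $r+t$ even, which reproduces $t\equiv r\pmod{2}$ and confirms the count independently.
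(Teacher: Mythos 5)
Your proof is correct. The paper offers no proof at all for this corollary --- it is left as an immediate consequence of the preceding results --- and your argument (intersecting the Hermitian parametrization of Corollary \ref{Lyapunov-general-invertible-101-hermitian} with the real matrices, counting the surviving indices $t\equiv r \pmod{2}$, and settling invertibility via Corollary \ref{solutie-patrat-1034}) is precisely the bookkeeping the paper intends, with the cross-check via the component formula confirming the parity condition independently. Your diagnosis of the typographical slip is also right: the two displayed cases should select $\mathcal{Y}_{2t}^{[r,r]}$ (for $r$ even, upper limit $r/2$) and $\mathcal{Y}_{2t-1}^{[r,r]}$ (for $r$ odd, upper limit $(r+1)/2$), and the dimension formula should read $\dim_{\mathbb{R}}\mathcal{L}^{sym}(0,0,O_r)$ rather than $\dim_{\mathbb{R}}\mathcal{L}^{H}(0,0,O_r)$.
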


\begin{cor}
Let be $\lambda,\mu\in \mathbb{C}$ and $r\in \mathbb{N}^*$. 

(i) The matrices $J^H_r(\lambda)$ and $J_r(\mu)$ are similar if and only if $\lambda+\overline{\mu}=0$.

(ii) If $\lambda+\overline{\mu}=0$, then $J^H_r(\lambda)$ and $J_r(\mu)$ are similar via a Hermitian matrix.

(iii) If $\lambda+\overline{\mu}=0$, then $J^H_r(\lambda)$ and $J_r(\mu)$ are similar via a real symmetric matrix if and only if $r\in 2\mathbb{N}+1$.
\end{cor}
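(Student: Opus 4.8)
The plan is to derive all three parts from the homogeneous Sylvester--Lyapunov equation
$$J_r(\lambda)^H X + X J_r(\mu) = O_r,$$
by exploiting the principle recalled earlier in the paper that a similarity between two matrices is realized exactly by an invertible solution of the associated homogeneous equation. Concretely, an invertible $X$ with $J_r(\lambda)^H X = -X J_r(\mu)$ is precisely an invertible matrix conjugating $J_r(\lambda)^H$ to $-X J_r(\mu) X^{-1}$, so the entire corollary reduces to deciding \emph{when} the solution set $\mathcal{L}(\lambda,\mu,O_r)$ contains an invertible element, and with which symmetry (general, Hermitian, or real symmetric). The first move is therefore to apply Proposition \ref{Prop-Sylvester-multiplu}(iii) to collapse the pair $(\lambda,\mu)$ into the single scalar $\nu=\lambda+\overline{\mu}$, so that $\mathcal{L}(\lambda,\mu,O_r)=\mathcal{L}(\nu,0,O_r)$ and the whole question becomes a question about $\nu$.

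For part (i), Theorem \ref{homogeneous-Jordan-block-F101}(i) shows that if $\nu\neq 0$ then $\mathcal{L}(\nu,0,O_r)=\{O_r\}$, which contains no invertible matrix; hence no similarity is possible. Conversely, when $\nu=0$, i.e.\ $\lambda+\overline{\mu}=0$, Corollary \ref{solutie-patrat-1034} exhibits explicit solutions whose determinant equals $p_1^{\,r}$, so choosing $p_1\neq 0$ produces an invertible conjugating matrix. This delivers the equivalence in (i).

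For parts (ii) and (iii) the only additional task is to control the \emph{symmetry} of the invertible conjugating matrix in the case $\nu=0$. For (ii), Corollary \ref{Lyapunov-general-invertible-101-hermitian} supplies a family of Hermitian solutions that is invertible precisely when $u_1\neq 0$; selecting such a member yields a Hermitian invertible $X$, which is the required Hermitian similarity. For (iii), Corollary \ref{Lyapunov-general-invertible-1012-symmetric} classifies the real symmetric solutions and records that an invertible one exists if and only if $r$ is odd; since the existence of an invertible real symmetric solution is exactly similarity through a real symmetric matrix, this gives the condition $r\in 2\mathbb{N}+1$.

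The main obstacle is pinning down the correspondence itself, not the subsequent bookkeeping. One must verify with care that, for an invertible $X$, the defining relation $J_r(\lambda)^H X + X J_r(\mu)=O_r$ is genuinely equivalent to the asserted similarity; the delicate point is the sign in $J_r(\lambda)^H = -X J_r(\mu) X^{-1}$ and the resulting compatibility of spectra, which is what forces the scalar condition to be $\nu=\lambda+\overline{\mu}=0$. One should also check that no solutions are lost when passing between $\mathcal{L}(\lambda,\mu,O_r)$ and $\mathcal{L}(\nu,0,O_r)$, so that invertibility and symmetry are preserved under the reduction of Proposition \ref{Prop-Sylvester-multiplu}(iii). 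Once this equivalence is secured, all three statements follow immediately from the already-established structure of the solution spaces.
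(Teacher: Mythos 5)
Your route is exactly the paper's: the corollary is stated there without proof because it is meant to be read off from Proposition \ref{Prop-Sylvester-multiplu}(iii), Theorem \ref{homogeneous-Jordan-block-F101}, and Corollaries \ref{solutie-patrat-1034}, \ref{Lyapunov-general-invertible-101-hermitian}, \ref{Lyapunov-general-invertible-1012-symmetric}, through the principle that invertible solutions of the homogeneous equation realize similarities. Your assembly of those ingredients --- which corollary governs which symmetry class, and the invertibility criteria $p_1\neq 0$, $u_1\neq 0$, and $r$ odd --- is the intended one and is correct.

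However, the point you defer as ``to be verified with care'' cannot in fact be verified, because it is a sign discrepancy in the statement itself, not a gap in your reasoning. An invertible $X\in\mathcal{L}(\lambda,\mu,O_r)$ gives $X^{-1}J_r(\lambda)^HX=-J_r(\mu)$, so the solution theory decides similarity of $J_r(\lambda)^H$ with $-J_r(\mu)$; matching spectra then forces $\overline{\lambda}=-\mu$, i.e.\ $\nu=\lambda+\overline{\mu}=0$, as you say. Similarity of $J_r(\lambda)^H$ with $+J_r(\mu)$ is a different relation: $J_r(\lambda)^H$ is similar to $J_r(\overline{\lambda})$ (conjugate by the anti-diagonal flip matrix), and two Jordan blocks of equal size are similar if and only if they have the same eigenvalue, so that similarity holds exactly when $\mu=\overline{\lambda}$. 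Consequently the corollary as printed fails in both directions: for $\lambda=\mu=1$ the matrices $J_r(1)^H$ and $J_r(1)$ are similar although $\lambda+\overline{\mu}=2\neq 0$, while for $\lambda=\mu=\mathfrak{i}$ we have $\lambda+\overline{\mu}=0$ although $J_r(\mathfrak{i})^H$, being similar to $J_r(-\mathfrak{i})$, is not similar to $J_r(\mathfrak{i})$. (Under the literal reading parts (ii) and (iii) also break down: whenever $J_r(\lambda)^H$ and $J_r(\mu)$ are similar at all, they are similar via the flip matrix, which is real symmetric for every $r$.) Your argument is a complete and correct proof of the corrected statement, with $-J_r(\mu)$ in place of $J_r(\mu)$ in all three parts --- which is what the Lyapunov context forces the corollary to mean; you should state that correction explicitly rather than leave the equivalence as a pending check.
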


\begin{cor}
$J_{2r}(\lambda)$ is a Hamiltonian matrix if and only if $r=1$ and $\lambda$ is a purely imaginary number.
\end{cor}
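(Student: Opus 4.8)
The plan is to unwind the definition of Hamiltonian matrix supplied in the Introduction: $A = J_{2r}(\lambda)$ is Hamiltonian precisely when the symplectic matrix $J_{\mathrm{symp}} = \begin{pmatrix} O_r & I_r \\ -I_r & O_r \end{pmatrix}$ is a solution of the homogeneous Lyapunov equation $J_{2r}(\lambda)^H X + X J_{2r}(\lambda) = O_{2r}$. This is exactly a homogeneous square Sylvester-Lyapunov equation of the type treated in Theorem \ref{homogeneous-Jordan-block-F101}, with $r=s=2r$ and $\mu = \lambda$, so by Proposition \ref{Prop-Sylvester-multiplu}(iii) the governing parameter is $\nu = \lambda + \overline{\lambda} = 2\,\mathrm{Re}(\lambda)$.

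First I would dispose of the case $\mathrm{Re}(\lambda)\neq 0$. Here $\nu \neq 0$, so Theorem \ref{homogeneous-Jordan-block-F101}(i) yields $\mathcal{L}(\nu,0,O_{2r}) = \{O_{2r}\}$. Since $J_{\mathrm{symp}}\neq O_{2r}$, it cannot be a solution, and $J_{2r}(\lambda)$ fails to be Hamiltonian. This forces, as a necessary condition, $\nu = 0$, i.e.\ $\lambda$ purely imaginary.

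Next, under $\nu = 0$, I would invoke the explicit anti-triangular description of the solutions from Corollary \ref{solutie-patrat-1034}: every $X \in \mathcal{L}(0,0,O_{2r})$ satisfies $x_{ij}=0$ whenever $i+j\leq 2r$, the nonzero entries being confined to the anti-diagonal region $i+j>2r$. The decisive step is to match this support against the nonzero entries of $J_{\mathrm{symp}}$, which lie at positions $(i,i+r)=1$ for $1\le i\le r$ and $(j+r,j)=-1$ for $1\le j\le r$. At the entry $(1,r+1)$ one has $i+j = r+2$, and $r+2>2r$ fails exactly when $r\geq 2$; hence for $r\geq 2$ the matrix $J_{\mathrm{symp}}$ has a nonzero entry where every solution must vanish, so $J_{\mathrm{symp}}\notin \mathcal{L}(0,0,O_{2r})$ and $J_{2r}(\lambda)$ is not Hamiltonian. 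For $r=1$, in contrast, the general size-$2$ solution reads $\begin{pmatrix} 0 & p_1 \\ -p_1 & p_2 \end{pmatrix}$, and the choice $p_1=1,\ p_2=0$ recovers $J_{\mathrm{symp}}$ exactly, so $J_2(\lambda)$ with $\lambda$ purely imaginary is Hamiltonian. Combining the two necessary conditions with this verification gives the stated equivalence.

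The heart of the argument is the support comparison in the third paragraph; I expect no serious obstacle beyond it, since once the anti-triangular form of the solutions is available, the mismatch for $r\geq 2$ reduces to a single-entry observation and the $r=1$ case is a direct check.
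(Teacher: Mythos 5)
Your proposal is correct and follows essentially the same route as the paper: reduce via $\nu=\lambda+\overline{\lambda}$, use Theorem \ref{homogeneous-Jordan-block-F101}(i) to force $\nu=0$, and then compare the symplectic matrix against the anti-triangular form of the solutions to force $r=1$, with the $r=1$ case checked directly. The only difference is that you make explicit the single-entry support comparison at position $(1,r+1)$, which the paper compresses into the phrase ``by using the form of the solutions we deduce that $r=1$.''
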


\begin{proof} Suppose that $J_{2r}(\lambda)$ is a Hamiltonian matrix. We consider the homogeneous matrix equation
$J^H_{2r}(\lambda)X+XJ_{2r}(\lambda)=O_{2r}.$
This equation is equivalent with the following homogeneous Lyapunov matrix equation
$J^H_{2r}(\lambda+\overline{\lambda})X+XJ_{2r}(0)=O_{2r}.$
A solution is given by the matrix $\begin{pmatrix}
O_r & I_r \\
-I_r & O_r
\end{pmatrix}$. We obtain that $\lambda+\overline{\lambda}=0$.
By using the form of the solutions we deduce that $r=1$.

We observe that $J_2(a \mathfrak{i})$, $a\in \R$, is a Hamiltonian matrix. \qed
\end{proof}

\subsection{The non-homogeneous case with $\nu\neq 0$}

\begin{thm}\label{nonhomogeneous-11-Jordan-block} If $\nu\neq 0$ and $C\in \mathcal{M}_{r\times s}(\mathbb{C})$, then  \eqref{Sylvester-Jordan-block-particular} has a unique solution. 

\noindent (a) If $C={\bf e}_i\otimes {\bf f}_j^T$, with $(i,j)\in \{1,\dots,r\}\times\{1,\dots,s\}$, then the solution is $\mathcal{X}^{[r,s]}_{ij}\left[-\frac{1}{\nu}\right]$ (see Section \ref{appendix}, \eqref{X-identity-998}).

\noindent (b) For a general $C$ the solution is $X=\sum\limits_{i=1}^r\sum\limits_{j=1}^s c_{ij}\mathcal{X}^{[r,s]}_{ij}\left[-\frac{1}{\nu}\right]$. We can write
\begin{equation}\label{nu-diferit-zero-reprezentare-11}
X=\sum\limits_{i=1}^r\sum\limits_{j=1}^s c_{ij}(J_r^T(0))^{i-1}\mathcal{X}^{[r,s]}\left[-\frac{1}{\nu}\right](J_s(0))^{j-1}.
\end{equation}

\end{thm}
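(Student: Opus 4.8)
The plan is to work directly from the scalar recurrence \eqref{Sylvester-recurenta}, which rewrites \eqref{Sylvester-Jordan-block-particular} entrywise as $\nu x_{ij}=-\left(x_{i-1,j}+x_{i,j-1}+c_{ij}\right)$ with the convention $x_{0j}=x_{i0}=0$. First I would settle existence and uniqueness. Since $\nu\neq 0$, this relation expresses each entry $x_{ij}$ in terms of entries lying on strictly lower anti-diagonals (smaller $i+j$); starting from $x_{11}=-c_{11}/\nu$ and inducting on $i+j$, every entry is forced and uniquely determined, so exactly one $X$ satisfies the system. Equivalently, the linear operator $X\mapsto J_r(\nu)^HX+XJ_s(0)$ on the finite-dimensional space $\mathcal{M}_{r,s}(\mathbb{C})$ has trivial kernel by Theorem~\ref{homogeneous-Jordan-block-F101}(i), hence is bijective, giving unique solvability for every $C$ at once.

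For part (a) I would verify that the matrix $\mathcal{X}^{[r,s]}_{ij}\left[-\frac{1}{\nu}\right]$ from \eqref{X-identity-998} is the (necessarily unique) solution when $C={\bf e}_i\otimes {\bf f}_j^T$. Because the source term $c_{kl}=\delta_{ki}\delta_{lj}$ is supported at a single position, an induction on $k+l$ using the recurrence forces $x_{kl}=0$ whenever $k<i$ or $l<j$, while $x_{ij}=-1/\nu$. For $k\ge i$, $l\ge j$ the remaining entries obey the homogeneous Pascal-type relation $x_{kl}=-\left(x_{k-1,l}+x_{k,l-1}\right)/\nu$, whose solution I claim is
\[
x_{kl}=\binom{(k-i)+(l-j)}{k-i}\left(-\frac{1}{\nu}\right)^{(k-i)+(l-j)+1},\qquad k\ge i,\ l\ge j.
\]
The check is an induction on $k+l$ whose inductive step is exactly Pascal's identity $\binom{n}{k}=\binom{n-1}{k-1}+\binom{n-1}{k}$; matching this array against the appendix definition of $\mathcal{X}^{[r,s]}_{ij}\left[-\frac{1}{\nu}\right]$ completes (a).

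Part (b) then follows by linearity. Writing $C=\sum_{i,j}c_{ij}\,{\bf e}_i\otimes {\bf f}_j^T$ and invoking Proposition~\ref{Prop-Sylvester-multiplu}(i)--(ii), the solution is the superposition $X=\sum_{i,j}c_{ij}\,\mathcal{X}^{[r,s]}_{ij}\left[-\frac{1}{\nu}\right]$. To reach the compact form \eqref{nu-diferit-zero-reprezentare-11} I would use that left multiplication by $(J_r^T(0))^{i-1}$ shifts rows downward by $i-1$ and right multiplication by $(J_s(0))^{j-1}$ shifts columns rightward by $j-1$, so that $\mathcal{X}^{[r,s]}_{ij}\left[-\frac{1}{\nu}\right]=(J_r^T(0))^{i-1}\,\mathcal{X}^{[r,s]}\left[-\frac{1}{\nu}\right]\,(J_s(0))^{j-1}$; that is, each single-entry solution is the base matrix $\mathcal{X}^{[r,s]}\left[-\frac{1}{\nu}\right]$ (the case $i=j=1$) with its Pascal pattern translated to start at $(i,j)$. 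Substituting this into the superposition yields \eqref{nu-diferit-zero-reprezentare-11}.

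I expect the only genuine work to be in (a): pinning down the closed form of the single-entry solution and confirming it against the recurrence via Pascal's identity, together with reconciling it with the appendix notation. Once the Pascal structure is identified, the shift-operator factorization and the linear superposition in (b) are routine bookkeeping.
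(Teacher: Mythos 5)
Your proof is correct, and it reaches the same destination as the paper's, but the handling of part (a) takes a genuinely different route. Both you and the paper work from the entrywise recurrence \eqref{Sylvester-recurenta}, settle existence and uniqueness by induction along anti-diagonals (the paper just says ``by mathematical induction''), and treat part (b) identically via linearity and the shift identity \eqref{X-identity-998}. The divergence is in (a): the paper first solves only the corner case $C=-\nu\,{\bf e}_1\otimes {\bf f}_1^T$, identifying the solution with $\Psi^{[r,s]}\left[-\tfrac{1}{\nu}\right]$ through the Pascal recursion, and then reduces a general position $(i,j)$ to that corner case \emph{structurally}: it partitions $X=(Y\,Z)$, writes $J_s(0)=J_{j-1}(0)\oplus J_{s-j+1}(0)$, kills the block $Y$ by invoking the homogeneous Theorem \ref{homogeneous-Jordan-block-F101}, and recognizes $Z$ as a smaller Pascal matrix. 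You instead treat a general $(i,j)$ in one stroke: induction on $k+l$ forces $x_{kl}=0$ for $k<i$ or $l<j$, and the explicit closed form
\begin{equation*}
x_{kl}=\binom{(k-i)+(l-j)}{k-i}\left(-\frac{1}{\nu}\right)^{(k-i)+(l-j)+1},\qquad k\ge i,\ l\ge j,
\end{equation*}
is verified against the recurrence by Pascal's identity; it matches the appendix definition of $\mathcal{X}^{[r,s]}_{ij}\left[-\tfrac{1}{\nu}\right]$ since your $\binom{\cdot}{k-i}$ equals the appendix's $\binom{\cdot}{l-j}$ by symmetry of binomial coefficients. Your route is more self-contained and elementary --- it needs neither the homogeneous theorem nor the direct-sum splitting, and it exhibits every entry explicitly --- at the cost of writing down and checking a closed formula; the paper's route is more modular, trading the formula for a block-decomposition that reuses results and notation already in place.
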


\begin{proof} The unicity is proved by mathematical induction.

$(a)$ First, we consider the  matrix equation  \eqref{Sylvester-Jordan-block-particular} with $\nu\neq 0$ and $\widehat{C}=-\nu{\bf e}_1\otimes {\bf f}_1^T$. The solution $\widehat{X}$ has the component $\widehat{x}_{11}=1$ and the following recursion is verified:
\begin{equation}
\widehat{x}_{ij}=-\frac{1}{\nu}\left(\widehat{x}_ {i-1,j}+\widehat{x}_{i,j-1}\right),\,\,\,\forall i\in \{1,\dots, r\},\,\forall j\in \{1,\dots, s\},\,(i,j)\neq (1,1).
\end{equation}
where $\widehat{x}_{0j}=\widehat{x}_{i0}=0$.

By using the properties of the matrix $\Psi^{[r,s]}[-\frac{1}{\nu}]$, see Section \ref{appendix}, we obtain that $\widehat{X}=\Psi^{[r,s]}[-\frac{1}{\nu}]$.
The Proposition \ref{Prop-Sylvester-multiplu} $(i)$ implies the announced result.

Second, we consider $i=1$ and $j>1$ and we write $X=(Y\,Z)$ with $Y\in \mathcal{M}_{r,j-1}(\mathbb{C})$ and $Z\in \mathcal{M}_{r,s-j+1}(\mathbb{C})$. We observe that we can write $J_s(0)=J_{j-1}(0)\oplus J_{s-j+1}(0)$.
The equation \eqref{Sylvester-Jordan-block-particular} has the form
$$J_r^H(\nu)(Y\,Z)+(Y\,Z)\left(J_{j-1}(0)\oplus J_{s-j+1}(0)\right)+\left(O_{r,j-1}\,{\bf e}_1\otimes \widetilde{\bf f}^T_1\right)=\left(O_{r,j-1}\,O_{r,s-j+1}\right),$$
where $\widetilde{\bf f}_1\in \R^{s-j+1}$. 
It is equivalent with the matrix system
$$\begin{cases}
J_r^H(\nu)Y+YJ_{j-1}(0)=O_{r,j-1} \\
J_r^H(\nu)Z+ZJ_{s-j+1}(0)+{\bf e}_1\otimes \widetilde{\bf f}^T_1=O_{r,s-j+1}.
\end{cases}$$
By using Theorem \ref{homogeneous-Jordan-block-F101} and first step of the induction, we obtain
$Y=O_{r,j-1},\,\,\,Z=-\frac{1}{\nu}\Psi^{[r,s-j+1]}[-\frac{1}{\nu}],$
which implies our result.

Analogously we obtain the announced results for the remaining cases.

$(b)$ We observe that $C=\sum_{i=1}^r\sum_{j=1}^s c_{ij}{\bf e}_i\otimes {\bf f}_j^T$ and we apply Proposition \ref{Prop-Sylvester-multiplu} and the above results.
For the formula \eqref{nu-diferit-zero-reprezentare-11} we use \eqref{X-identity-998}. \qed
\end{proof}
\medskip

In some important cases we have that $C$ is a Hermitian positive semidefinite matrix. There exists $L\in \mathcal{M}_{r\times s}(\mathbb{C})$ such $C=LL^H\in \mathcal{M}_r(\mathbb{C})$, see Theorem 7.2.7 from \cite{horn}. The components verify $c_{ij}=\sum\limits_{q=1}^sl_{iq}\overline{l_{jq}}$. The solution presented in the above theorem has the form
\begin{equation}\label{X-Cholesky-10}
X=\sum\limits_{q=1}^s L_q\mathcal{X}^{[r,r]}[-\frac{1}{\nu}]L_q^H,
\end{equation}
where 
$$L_q=\sum\limits_{i=1}^r l_{i,q}(J_r^T(0))^{i-1}=
\begin{pmatrix}
l_{1,q} & 0 & 0 & \dots & 0 \\
l_{2,q} & l_{1,q} & 0 & \dots & 0 \\
\vdots & \vdots & \vdots & \ddots & \vdots \\
l_{r,q} & l_{r-1,q} & l_{r-2,q} & \dots & l_{1,q}
\end{pmatrix}.
$$

\begin{thm}
Suppose that  $\nu\in \R^*$ and $C$ is a Hermitian positive semidefinite matrix. 

(i) If $\nu>0$ (respectively $\nu<0$), then the solution of \eqref{Sylvester-Jordan-block-particular} is a negative (respectively positive) semidefinite matrix.

(ii) If $\nu>0$ (respectively $\nu<0$) and $c_{11}\neq 0$, then the solution is a negative (respectively positive) definite matrix.

\end{thm}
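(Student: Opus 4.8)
The plan is to reduce both parts to the sign of the single matrix $M:=\mathcal{X}^{[r,r]}\left[-\tfrac1\nu\right]$. Since $C$ is Hermitian we necessarily have $r=s$, so on writing $C=LL^H$ the unique solution furnished by Theorem~\ref{nonhomogeneous-11-Jordan-block} is the one already displayed in \eqref{X-Cholesky-10},
\begin{equation*}
X=\sum_{q=1}^{s}L_qML_q^H,
\end{equation*}
where each $L_q$ is the lower-triangular Toeplitz matrix all of whose diagonal entries equal $l_{1,q}$. Consequently, for every ${\bf v}\in\mathbb{C}^r$,
\begin{equation*}
{\bf v}^HX{\bf v}=\sum_{q=1}^{s}\left(L_q^H{\bf v}\right)^HM\left(L_q^H{\bf v}\right),
\end{equation*}
and the whole statement follows once the sign of the quadratic form attached to $M$ is known.

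The decisive step is to identify $M$ with a congruent copy of the symmetric Pascal matrix. From the proof of Theorem~\ref{nonhomogeneous-11-Jordan-block} one has $M=-\tfrac1\nu\,\Psi^{[r,r]}\left[-\tfrac1\nu\right]$, and the entries of $\Psi^{[r,r]}[t]$ are $\binom{i+j-2}{i-1}t^{\,i+j-2}$. Setting $t=-\tfrac1\nu\in\R^*$ and $D_t=\mathrm{diag}(1,t,\dots,t^{r-1})$, an entrywise check gives
\begin{equation*}
\Psi^{[r,r]}[t]=D_t\,P\,D_t,\qquad P_{ij}=\binom{i+j-2}{i-1},
\end{equation*}
with $P$ the symmetric Pascal matrix. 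Since $P=L_PL_P^T$ for the unit lower-triangular Pascal matrix $(L_P)_{ij}=\binom{i-1}{j-1}$ (Vandermonde's identity), $P$ is positive definite; as $D_t$ is real and invertible, $\Psi^{[r,r]}[t]$ is congruent to $P$ and hence positive definite by Sylvester's law of inertia. Therefore $M=-\tfrac1\nu\,\Psi^{[r,r]}[t]$ is negative definite when $\nu>0$ and positive definite when $\nu<0$. I expect this recognition of $\Psi^{[r,r]}\left[-\tfrac1\nu\right]$ as a diagonal congruence of the positive definite Pascal matrix to be the crux; the remainder is bookkeeping.

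Finally I would assemble the two claims from the quadratic-form identity. For part (i) with $\nu>0$ the matrix $M$ is negative definite, so each summand $\left(L_q^H{\bf v}\right)^HM\left(L_q^H{\bf v}\right)\le0$, whence ${\bf v}^HX{\bf v}\le0$ for all ${\bf v}$ and $X$ is negative semidefinite; the case $\nu<0$ is identical with the signs reversed. For part (ii) note that $c_{11}=\sum_{q}|l_{1,q}|^2$, so $c_{11}\neq0$ forces $l_{1,q_0}\neq0$ for some $q_0$, and then $L_{q_0}$ is invertible, giving $L_{q_0}^H{\bf v}\neq0$ whenever ${\bf v}\neq0$. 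With $\nu>0$ the remaining summands being nonpositive yields
\begin{equation*}
{\bf v}^HX{\bf v}\le\left(L_{q_0}^H{\bf v}\right)^HM\left(L_{q_0}^H{\bf v}\right)<0\qquad\text{for all }{\bf v}\neq0,
\end{equation*}
so $X$ is negative definite; the case $\nu<0$ is symmetric.
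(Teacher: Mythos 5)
Your proposal is correct, and its overall skeleton coincides with the paper's: both write $C=LL^H$, invoke the representation \eqref{X-Cholesky-10} of the unique solution as $X=\sum_q L_q\,\mathcal{X}^{[r,r]}\left[-\tfrac{1}{\nu}\right]L_q^H$, reduce everything to the definiteness of $\mathcal{X}^{[r,r]}\left[-\tfrac{1}{\nu}\right]$, and handle (ii) by noting $c_{11}=\sum_q|l_{1,q}|^2\neq 0$ forces some $L_{q_0}$ to be invertible. Where you genuinely diverge is in the crux you yourself single out: the paper does not prove the definiteness of $\mathcal{X}^{[r,r]}[y]$ inside the theorem at all, but quotes it from Section \ref{appendix}, where it rests on Sylvester's criterion together with the determinant formula $\det\Psi^{[r,r]}[y]=y^{r(r-1)}$ imported from Theorem 8 of \cite{zhang-liu}; you instead give a self-contained argument via the diagonal congruence $\Psi^{[r,r]}[t]=D_tPD_t$ and the Cholesky factorization $P=L_PL_P^T$ of the symmetric Pascal matrix (Vandermonde's identity), concluding by Sylvester's law of inertia. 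Your route buys independence from the external determinant theorem --- indeed it re-derives it, since $\det(D_tPD_t)=t^{r(r-1)}\det(L_P)^2=t^{r(r-1)}$ --- while the paper's is shorter given the cited literature. A second, minor difference: the paper assembles the semidefinite sum and the congruences by citing Observations 7.1.3 and 7.1.8 of \cite{horn}, whereas you argue directly with quadratic forms; the two are equivalent, and your version is the more elementary.
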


\begin{proof} We can write $C=LL^H\in \mathcal{M}_r(\mathbb{C})$ with $L\in \mathcal{M}_{r}(\mathbb{C})$. The solution is a Hermitian matrix.

$(i)$ In this case $-\frac{1}{\nu}<0$ and the matrix $\mathcal{X}^{[r,r]}[-\frac{1}{\nu}]$ is negative definite, see Section \ref{appendix}. By using Observation 7.1.8 from \cite{horn} we have that for $q\in \{1,\dots,r\}$ the matrix $L_q\mathcal{X}^{[r,r]}[-\frac{1}{\nu}]L_q^H$ is negative semidefinite. Using now Observation 7.1.3 from \cite{horn} we deduce that the solution is negative semidefinite.

The case $\nu<0$ is treated analogously.

$(ii)$ We observe that $c_{11}=\sum_{i=1}^r|l_{1q}|^2$. If $c_{11}\neq 0$, then it exists $q^*\in \{1,\dots,r\}$ such that $l_{1q^*}\neq 0$. We have $\text{rank}(L_{q^*})=r$ and we use Observation 7.1.8 from \cite{horn} to deduce that the matrix $L_q\mathcal{X}^{[r,r]}[-\frac{1}{\nu}]L_q^H$ is positive definite. By using $(i)$ and Observation 7.1.3 from \cite{horn} we deduce that the solution is positive definite. 
The case $\nu<0$ is treated analogously. \qed

\end{proof}

\begin{cor}
Suppose that  $\nu\in \R^*$ and $C$ is a Hermitian positive definite matrix. 
If $\nu>0$ (respectively $\nu<0$), then the solution of \eqref{Sylvester-Jordan-block-particular} is a negative (respectively positive) definite matrix.

\end{cor}

\subsection{The non-homogeneous case with $\nu= 0$}

The components of a solution verify
\begin{equation}\label{Sylvester-recurenta-nu-zero}
x_ {i-1,j}+x_{i,j-1}+c_{ij}=0,\,\,\,\forall i\in \{1,\dots, r\},\,\forall j\in \{1,\dots, s\},
\end{equation}
where $x_{0j}=x_{i0}=0$.

\begin{thm}\label{Sylvester-Jordan-block-homogeneous-thm}
If $\nu=0$, then the equation \eqref{Sylvester-Jordan-block-particular} has solutions if and only if 
 \begin{equation}
 \sum\limits_{j=1}^{i-1}(-1)^jc_{i-j,j}=0,\,\,\,\forall i\in \{2,\dots,\min\{r,s\}+1\}.
 \end{equation}

\noindent If $\widehat{X}\in \mathcal{L}(0,0,C)$, then
$\mathcal{L}(0,0,C)=\widehat{X}+\mathcal{L}(0,0,O_{r,s}).$

\noindent If the above conditions are satisfied, then the components of a particular solution are:
\begin{align*}
& \text{for}\,\,r\leq s:\,\,\widehat{x}_{ij}=
\begin{cases}
\,\,\,\,\,\,\,\,\,0 & \text{if}\,\,j=s \\
\sum\limits_{k=1}^{\min\{i,s-j\}}(-1)^k c_{i-k+1,j+k} &  \text{if}\,\,j<s
\end{cases};\\
& \text{for}\,\,r> s:\,\,\widehat{x}_{ij}=
\begin{cases}
\,\,\,\,\,\,\,\,\,0 & \text{if}\,\,i=r \\
\sum\limits_{k=1}^{\min\{j,r-i\}}(-1)^k c_{i+k,j-k+1} &  \text{if}\,\,i<r
\end{cases}.
\end{align*}

\end{thm}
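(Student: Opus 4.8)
The plan is to use that equation \eqref{Sylvester-Jordan-block-particular} with $\nu=0$ is linear in $X$ and that, written in coordinates \eqref{Sylvester-recurenta-nu-zero}, it couples entries only along the anti-diagonals $i+j=\text{const}$. The affine statement is then immediate: if $\widehat X$ is any solution, then $X$ solves \eqref{Sylvester-Jordan-block-particular} if and only if $X-\widehat X$ solves the homogeneous equation, so $\mathcal{L}(0,0,C)=\widehat X+\mathcal{L}(0,0,O_{r,s})$ by the same linearity argument as in Proposition \ref{affine-property-general-case}. Everything thus reduces to deciding when a solution exists and to producing one explicitly.

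For the necessity of the compatibility conditions I would fix an anti-diagonal $i+j=n$, multiply the scalar equation \eqref{Sylvester-recurenta-nu-zero} at position $(i,n-i)$ by $(-1)^i$, and sum over all admissible $i$. The two families of unknowns $x_{i-1,n-i}$ and $x_{i,n-i-1}$ both sit on the anti-diagonal $i+j=n-1$, and after shifting the index in the first family the interior terms cancel telescopically, leaving only two boundary entries together with the alternating sum $\sum_i(-1)^ic_{i,n-i}$. The decisive point is that for $n\le\min\{r,s\}+1$ both surviving boundary entries carry a zero row- or column-index and therefore vanish by the convention $x_{0j}=x_{i0}=0$; what is left is exactly $\sum_i(-1)^ic_{i,n-i}=0$, i.e. the stated condition for the index $i=n$. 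Since each condition involves a distinct anti-diagonal of $C$ and contains $c_{n-1,1}$ with nonzero coefficient, the $\min\{r,s\}$ conditions are linearly independent; combined with the value $\dim_{\mathbb{C}}\mathcal{L}(0,0,O_{r,s})=\min\{r,s\}$ from Theorem \ref{homogeneous-Jordan-block-F101}, this already forces the image of the solution operator to have codimension exactly $\min\{r,s\}$, which is a useful consistency check that no condition has been missed.

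For sufficiency, and to justify the displayed formula, I would assume $r\le s$; the case $r>s$ follows by passing to conjugate transposes, which turns \eqref{Sylvester-Jordan-block-particular} into the same equation with $(r,s,C)$ replaced by $(s,r,C^H)$ and leaves the compatibility conditions invariant, exactly the reduction used in the proof of Theorem \ref{homogeneous-Jordan-block-F101}. I would then substitute the announced $\widehat x_{ij}$ into \eqref{Sylvester-recurenta-nu-zero} directly. Writing $\widehat x_{i-1,j}$ and $\widehat x_{i,j-1}$ as truncated alternating sums and shifting the summation index by one, the two expressions are seen to run over a common range and cancel term by term, the residue being precisely $-c_{ij}$; this confirms the recursion at all interior positions and at the edge $j=s$ with no hypothesis on $C$.

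The main obstacle, and the only place the compatibility conditions are consumed, is the bookkeeping at the corners. Along each corner anti-diagonal $n\le\min\{r,s\}+1$ the truncation $\min\{i,s-j\}$ changes regime, and at the extreme position $(n-1,1)$ the ghost entry $\widehat x_{n-1,0}=0$ breaks the telescoping cancellation, so the recursion there collapses to $\sum_{j=1}^{n-1}(-1)^jc_{n-j,j}=0$, which is available precisely because the compatibility condition for the index $n$ has been assumed. I would therefore organize this last step as a finite induction on $n$, treating the positions with $i=1$, with $j=s$, and the corner $(n-1,1)$ as the boundary cases and the cancellation identity as the inductive core; verifying that the min-truncations and ghost entries interact as claimed at these few boundary positions is the delicate part, and once it is done the explicit form of $\widehat X$ is fully justified.
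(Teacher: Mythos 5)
Your proposal is correct and takes essentially the same route as the paper: necessity is obtained by alternating, telescoping sums of the scalar recursion along the anti-diagonals $i+j=n$ (using the convention $x_{0j}=x_{i0}=0$ to kill the boundary terms), and existence is settled by directly verifying the displayed particular solution (the paper's ``by calculus'') together with the dimension count from Theorem \ref{homogeneous-Jordan-block-F101}, with the affine structure following from linearity. Your additional details --- the conjugate-transpose reduction of the case $r>s$ to $r\le s$, and the identification that the compatibility condition of index $n$ is consumed exactly at the first-column position $(n-1,1)$ --- only make explicit what the paper's proof leaves implicit.
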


\begin{proof}
First we suppose that the matrix equation \eqref{Sylvester-Jordan-block-particular} has solutions. We write the equation \eqref{Sylvester-recurenta-nu-zero} for $i=j=1$ and we obtain $c_{11}=0$. For $i\geq 3$ we have
$$x_{i-2,1}+c_{i-1,1}=0,\,x_{i-3,2}+x_{i-2,1}+c_{i-2,2}=0,\dots,\,x_{1,i-2}+c_{1,i-1}=0$$
and we obtain the last equalities.

By calculus it is verified that the matrix $\widehat{X}$ from the statement of the Theorem is a particular solution of \eqref{Sylvester-recurenta-nu-zero}.

 By using the Theorem \ref{homogeneous-Jordan-block-F101}, the dimension of $\mathcal{L}_{r,s}(0,0,O_{r,s})$ is $\min\{r,s\}$, which proves that the equalities from hypothesis are sufficient conditions for the existence of solutions for \eqref{Sylvester-Jordan-block-particular}.
 
\end{proof}

\begin{thm}\label{Hermitian-nu-este-zero}
Suppose that $C$ is a Hermitian matrix, $r=s\geq 2$ and $\nu=0$. The matrix equation \eqref{Sylvester-Jordan-block-particular} has a Hermitian solution if and only if we have the equalities:
\begin{align*}
& \sum\limits_{j=1}^{\alpha }(-1)^j\emph{Im}(c_{j,2\alpha+1-j})=0,\,\forall\alpha\in \left\{1,\dots, \left[\frac{r}{2}\right]\right\},\\
& c_{11}=0,\,\,\,(-1)^{\alpha}c_{\alpha\alpha}+2\sum\limits_{j=1}^{\alpha-1 }(-1)^j\emph{Re}(c_{j,2\alpha-j})=0,\,\forall\alpha\in \left\{2,\dots, \left[\frac{r+1}{2}\right]\right\}.
\end{align*}
If the above conditions are satisfied, then a particular Hermitian solution of \eqref{Sylvester-Jordan-block-particular} is $\widehat{X}^h=\frac{1}{2}(\widehat{X}+\widehat{X}^H)$, where $\widehat{X}$ is the particular solution presented in Theorem \ref{Sylvester-Jordan-block-homogeneous-thm}.
 
\end{thm}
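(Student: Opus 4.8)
The plan is to reduce the problem to the plain solvability question already answered in Theorem~\ref{Sylvester-Jordan-block-homogeneous-thm} and then translate the resulting conditions into Hermitian form by exploiting the symmetry $c_{ba}=\overline{c_{ab}}$. Since $r=s$ and $\nu=0$, equation~\eqref{Sylvester-Jordan-block-particular} reads $J_r(0)^HX+XJ_r(0)+C=O_r$, which is a genuine Lyapunov matrix equation with $A=J_r(0)$. Because $C$ is Hermitian, Proposition~\ref{Hermitian-solutions-abc}(iii) applies: if $X$ is any solution then $X^h=\frac{1}{2}(X+X^H)$ is a Hermitian solution, while a Hermitian solution is trivially a solution. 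Hence the existence of a Hermitian solution is equivalent to the existence of a solution, and the particular Hermitian solution is precisely $\widehat{X}^h=\frac{1}{2}(\widehat{X}+\widehat{X}^H)$ built from the $\widehat{X}$ of Theorem~\ref{Sylvester-Jordan-block-homogeneous-thm}. It therefore remains to show that, under the Hermitian hypothesis, the solvability conditions
$$S_i:=\sum_{j=1}^{i-1}(-1)^jc_{i-j,j}=0,\qquad i\in\{2,\dots,r+1\},$$
of Theorem~\ref{Sylvester-Jordan-block-homogeneous-thm} are exactly the two families of equalities in the statement.

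The core of the argument is an antidiagonal analysis of $S_i$, which I would carry out separately according to the parity of $i$. Each $S_i$ is a signed sum over the antidiagonal $\{c_{a,b}:a+b=i\}$, so I would pair the term indexed by $j$ with the one indexed by $i-j$ and use $c_{j,i-j}=\overline{c_{i-j,j}}$. When $i=2\alpha+1$ is odd there is no diagonal entry, the two paired signs $(-1)^j$ and $(-1)^{i-j}$ are opposite, and each pair contributes $(-1)^j(c_{i-j,j}-\overline{c_{i-j,j}})=2\mathfrak{i}(-1)^j\mathrm{Im}(c_{i-j,j})$; collecting these yields $S_i=0\iff\sum_{j=1}^{\alpha}(-1)^j\mathrm{Im}(c_{j,2\alpha+1-j})=0$, which is the first family. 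When $i=2\alpha$ is even the central index $j=\alpha$ gives the real diagonal term $(-1)^{\alpha}c_{\alpha\alpha}$, the remaining paired signs coincide, and each pair contributes $2(-1)^j\mathrm{Re}(c_{j,2\alpha-j})$; this gives $S_i=0\iff(-1)^{\alpha}c_{\alpha\alpha}+2\sum_{j=1}^{\alpha-1}(-1)^j\mathrm{Re}(c_{j,2\alpha-j})=0$, which is the second family, the case $\alpha=1$ degenerating to $c_{11}=0$.

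Finally I would check that the index ranges match: $i$ runs over $\{2,\dots,r+1\}$, the odd values $i=2\alpha+1$ giving $\alpha\in\{1,\dots,[r/2]\}$ and the even values $i=2\alpha$ giving $\alpha\in\{1,\dots,[(r+1)/2]\}$, so that splitting off $\alpha=1$ leaves $\alpha\in\{2,\dots,[(r+1)/2]\}$ for the general even condition. The main obstacle is not conceptual but the careful bookkeeping of signs, conjugates, and the off-by-one in these ranges; in particular one must verify that the odd-$i$ condition is genuinely a constraint only on the imaginary parts (the real parts cancelling in the pairing) and that the even-$i$ condition couples the real diagonal entry with the real parts of the off-diagonal entries, so that the single complex equation $S_i=0$ collapses to one real equation in each case.
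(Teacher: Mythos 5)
Your proposal is correct and follows essentially the same route as the paper: reduce to plain solvability via Proposition~\ref{Hermitian-solutions-abc} (so that $\widehat{X}^h=\frac{1}{2}(\widehat{X}+\widehat{X}^H)$ is the Hermitian particular solution), then rewrite the compatibility conditions of Theorem~\ref{Sylvester-Jordan-block-homogeneous-thm} by pairing antidiagonal entries under $c_{j,i-j}=\overline{c_{i-j,j}}$ and splitting by the parity of $i$, with the same index bookkeeping. The paper performs the identical computation, merely organizing the pairing as a split of each sum into two halves followed by reindexing rather than term-by-term pairing.
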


\begin{proof} 
By using Proposition \ref{Hermitian-solutions-abc} we have that $\mathcal{L}^H_{J_r(0),C}\neq \emptyset$ if and only if $\mathcal{L}_{J_r(0),C}\neq \emptyset$. We rewrite the conditions of the Theorem \ref{Sylvester-Jordan-block-homogeneous-thm}.

For $i=2\alpha+1\in 2\mathbb{N}+1$ we obtain 
\begin{align*}
&\sum\limits_{j=1}^{2\alpha }(-1)^jc_{2\alpha+1-j,j} =  \sum\limits_{j=1}^{\alpha }(-1)^jc_{2\alpha+1-j,j}+\sum\limits_{j=\alpha+1}^{2\alpha }(-1)^jc_{2\alpha+1-j,j} \\
 & =  \sum\limits_{j=1}^{\alpha }(-1)^jc_{2\alpha+1-j,j}+  \sum\limits_{q=1}^{\alpha }(-1)^{2\alpha -q+1}{c_{q,2\alpha+1-q}} \\
& =  \sum\limits_{j=1}^{\alpha }(-1)^j\left(\overline{c_{j,2\alpha+1-j}}-{c_{j,2\alpha+1-j}}\right)
\end{align*} 
and for $i=2\alpha\in 2\mathbb{N}$ we have
\begin{align*}
& \sum\limits_{j=1}^{2\alpha-1 }(-1)^jc_{2\alpha-j,j} =  \sum\limits_{j=1}^{\alpha-1 }(-1)^jc_{2\alpha-j,j} +(-1)^{\alpha}c_{\alpha\alpha}+\sum\limits_{j=\alpha+1}^{2\alpha-1 }(-1)^jc_{2\alpha-j,j} \\
& =  \sum\limits_{j=1}^{\alpha-1 }(-1)^jc_{2\alpha-j,j} +(-1)^{\alpha}c_{\alpha\alpha}+\sum\limits_{q=1}^{\alpha -1}(-1)^{2\alpha -q}c_{q,2\alpha-q} \\
& =  (-1)^{\alpha}c_{\alpha\alpha}+ \sum\limits_{j=1}^{\alpha-1 }(-1)^j\left(\overline{c_{j,2\alpha-j}} +{c_{j,2\alpha-j}}\right).  \qed
\end{align*}
\end{proof}

\begin{thm}\label{symmetric-nu-este-zero}
Suppose that $C$ is a real symmetric matrix, $r=s\geq 2$ and $\nu=0$. The matrix equation \eqref{Sylvester-Jordan-block-particular} has a real symmetric solution if and only if for all $\alpha\in \left\{1,\dots, \left[\frac{r+1}{2}\right]\right\}$, we have the equalities:
$$c_{11}=0,\,\,\,(-1)^{\alpha}c_{\alpha\alpha}+2\sum\limits_{j=1}^{\alpha-1 }(-1)^jc_{j,2\alpha-j}=0,\,\forall\alpha\in \left\{2,\dots, \left[\frac{r+1}{2}\right]\right\}.$$
If the above conditions are satisfied, then a real symmetric solution of \eqref{Sylvester-Jordan-block-particular} is $\widehat{X}^s=\frac{1}{2}(\widehat{X}+\widehat{X}^T)$, where $\widehat{X}$ is the solution presented in Theorem \ref{Sylvester-Jordan-block-homogeneous-thm}.
 
\end{thm}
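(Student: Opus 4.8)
The plan is to reduce the existence of a real symmetric solution to the existence of an arbitrary solution, which is governed by Theorem~\ref{Sylvester-Jordan-block-homogeneous-thm}, and then to rewrite its compatibility conditions using the real symmetry $c_{ij}=c_{ji}$, in complete analogy with the Hermitian case of Theorem~\ref{Hermitian-nu-este-zero}.

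First I would apply Proposition~\ref{symmetric-solutions-abc}. Since $J_r(0)$ and $C$ are real, a real symmetric solution trivially produces a solution; conversely, if $X\in\mathcal{L}(0,0,C)$, then part (i) gives the real solution $\mathrm{Re}(X)\in\mathcal{L}(0,0,C)$, and part (iv.2) turns it into the real symmetric solution $\frac{1}{2}(\mathrm{Re}(X)+\mathrm{Re}(X)^T)$. Hence $\mathcal{L}^{sym}(0,0,C)\neq\emptyset$ if and only if $\mathcal{L}(0,0,C)\neq\emptyset$, and by Theorem~\ref{Sylvester-Jordan-block-homogeneous-thm} (with $r=s$) the latter holds if and only if $\sum_{j=1}^{i-1}(-1)^jc_{i-j,j}=0$ for all $i\in\{2,\dots,r+1\}$.

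Next I would split this family according to the parity of $i$ and reindex the upper half of each sum by $q=i-j$, using $c_{ij}=c_{ji}$. For odd $i=2\alpha+1$ this yields $\sum_{j=1}^\alpha(-1)^jc_{2\alpha+1-j,j}-\sum_{q=1}^\alpha(-1)^qc_{q,2\alpha+1-q}$, and symmetry makes the two sums coincide, so the odd conditions hold automatically. For even $i=2\alpha$ the same reindexing pairs the off-diagonal terms and isolates the central term at $j=\alpha$, giving $(-1)^\alpha c_{\alpha\alpha}+2\sum_{j=1}^{\alpha-1}(-1)^jc_{j,2\alpha-j}=0$; for $\alpha=1$ this reads $c_{11}=0$, and for $\alpha\in\{2,\dots,[\frac{r+1}{2}]\}$ it is precisely the stated equality. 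Tracking the limits shows that the even indices $i$ run over $\alpha\in\{1,\dots,[\frac{r+1}{2}]\}$, so the announced conditions are exactly equivalent to the existence of a solution.

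Finally, when these conditions hold, the particular solution $\widehat{X}$ from Theorem~\ref{Sylvester-Jordan-block-homogeneous-thm} has entries that are signed sums of the real numbers $c_{ij}$, hence $\widehat{X}$ is real, and Proposition~\ref{symmetric-solutions-abc}(iv.2) then produces the real symmetric solution $\widehat{X}^s=\frac{1}{2}(\widehat{X}+\widehat{X}^T)$. The only delicate point I anticipate is the index bookkeeping in the parity split, namely keeping the sign $(-1)^{i-j}$ and the summation limits correct; but since this merely repeats the Hermitian computation with $\overline{c}$ replaced by $c$, no genuinely new difficulty should arise.
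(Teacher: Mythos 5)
Your proof is correct and takes essentially the same route as the paper: the paper also reduces existence of a real symmetric solution to plain solvability (observing that the particular solution $\widehat{X}$ of Theorem~\ref{Sylvester-Jordan-block-homogeneous-thm} is real and symmetrizing it via Proposition~\ref{symmetric-solutions-abc}(iv.2)), and it obtains the stated conditions from exactly the parity-split reindexing you perform, merely compressing that computation by citing Theorem~\ref{Hermitian-nu-este-zero} and noting that its imaginary-part conditions hold trivially for real $C$.
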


\begin{proof}
The first conditions of the above theorem are easily verified. We observe that the particular solution $\widehat{X}$ from Theorem \ref{Sylvester-Jordan-block-homogeneous-thm} is a real matrix.

\end{proof}

To write a general solution we use Theorem \ref{homogeneous-Jordan-block-F101}  and Theorem \ref{Sylvester-Jordan-block-homogeneous-thm}. For a Hermitian solution we use Proposition \ref{Hermitian-solutions-abc}, Corollary \ref{Lyapunov-general-invertible-101-hermitian}, and Theorem \ref{Hermitian-nu-este-zero}. For a real symmetric solution we use  Proposition \ref{symmetric-solutions-abc}, Corollary \ref{Lyapunov-general-invertible-1012-symmetric}, and Theorem \ref{symmetric-nu-este-zero}.
\begin{exmp}
The case $r=s=3$. The necessary and sufficient conditions for compatibility are:
$c_{11}=0,\,\,\,c_{21}-c_{12}=0,\,\,\,c_{31}-c_{22}+c_{13}=0.$
The particular and general solutions are
$$\widehat{X}=\begin{pmatrix}
-c_{12} & -c_{13} & 0 \\
c_{13}-c_{22} & -c_{23} & 0 \\
-c_{32}+c_{23} & -c_{33} & 0
\end{pmatrix},\,X=\begin{pmatrix}
-c_{12} & -c_{13} & p_1 \\
c_{13}-c_{22} & -c_{23}-p_1 & p_2 \\
-c_{32}+c_{23}+p_1 & -c_{33}-p_2 & p_3
\end{pmatrix},p_1,p_2,p_3\in \mathbb{C}.$$

If $C$ is a Hermitian matrix, then the necessary and sufficient conditions for compatibility are $c_{11}=0$, $\text{Im}(c_{12})=0$, and $2\text{Re}(c_{13})-c_{22}=0.$ The Hermitian solutions are
\begin{align*}
X= & \begin{pmatrix}
-c_{12} & -c_{13} & -\text{Im}(c_{23})+u_1 \\
-\overline{c_{13}} & -\text{Re}(c_{23})-u_1 & -\frac{c_{33}}{2}+u_2\mathfrak{i} \\
\text{Im}(c_{23})+u_1 & -\frac{c_{33}}{2}-u_2 \mathfrak{i} & u_3
\end{pmatrix},\,u_1,u_2,u_3\in \R.
\end{align*}

If $C$ is a real symmetric matrix, then the necessary and sufficient conditions for compatibility are
$c_{11}=0$, $2c_{13}-c_{22}=0.$
The general real symmetric solution is
$$X=\begin{pmatrix}
-c_{12} & -c_{13} & u_1 \\
-c_{13} & -c_{23}-u_1 & -\frac{c_{33}}{2} \\
u_1 & -\frac{c_{33}}{2} & u_3
\end{pmatrix},\,\,\,u_1,u_3\in \R.$$

\end{exmp}

\section{Lyapunov matrix equation with Jordan matrix}\label{Lyapunov matrix equation with Jordan matrix}

We suppose that the matrix $A\in \mathcal{M}_r(\mathbb{C})$ is a Jordan matrix; i.e.
\begin{equation}\label{Jordan-matrix-form-11}
A=J_{r_1}(\lambda_1)\oplus J_{r_2}(\lambda_2)\oplus\dots\oplus J_{r_k}(\lambda_k),
\end{equation}
with $k\in \mathbb{N}^*$, $\lambda_1,\dots,\lambda_k\in \mathbb{C}$, for all $\alpha\in\{1,\dots,k\}$ the matrix $J_{r_{\alpha}}(\lambda_{\alpha})\in \mathcal{M}_{r_{\alpha}}(\mathbb{C})$ is a Jordan block matrix, and $\sum\limits_{\alpha=1}^kr_{\alpha}=r$.

To describe the Lyapunov matrix equation we denote $X=(X_{\alpha\beta})_{\alpha,\beta\in \{1,\dots,k\}}$ and $C=(C_{\alpha\beta})_{\alpha,\beta\in \{1,\dots,k\}}$
with $X_{\alpha\beta},C_{\alpha\beta}\in \mathcal{M}_{r_{\alpha},r_{\beta}}(\mathbb{C})$ for all $\alpha,\beta\in \{1,\dots,k\}$.

The Lyapunov matrix equation \eqref{Lyapunov-matrix-equation} is reduced to the system
\begin{equation}\label{Lyapunov-Jordan-equivalent-system}
J_{r_{\alpha}}^H(\lambda_{\alpha})X_{\alpha\beta}+X_{\alpha\beta}J_{r_{\beta}}(\lambda_{\beta})+C_{\alpha\beta}=O_{r_{\alpha}r_{\beta}},\,\,\,\forall \alpha,\beta\in \{1,\dots,k\}.
\end{equation}
We have $k^2$ Sylvester-Lyapunov matrix equations with Jordan block matrices. 
We make the notation:
$$\mathcal{K}_A=\{(\alpha,\beta)\in \{1,\dots,k\}^2\,|\,,\lambda_{\alpha}+\overline{\lambda_{\beta}}=0\}.$$
\begin{rem}
We have $(\alpha,\alpha)\in \mathcal{K}_A$ if and only if $\alpha=a\mathfrak{i}$ with $a\in \R$. If $(\alpha,\beta)\in \mathcal{K}_A$, then $(\beta,\alpha)\in \mathcal{K}_A$.
\end{rem}

\subsection{The homogeneous case}

In this section we suppose that $C=O_r$. The system \eqref{Lyapunov-Jordan-equivalent-system} becomes
\begin{equation}\label{Lyapunov-Jordan-equivalent-system-homogeneous}
J_{r_{\alpha}}^H(\lambda_{\alpha})X_{\alpha\beta}+X_{\alpha\beta}J_{r_{\beta}}(\lambda_{\beta})=O_{r_{\alpha}r_{\beta}},\,\,\,\forall \alpha,\beta\in \{1,\dots,k\}.
\end{equation}

It is easy to obtain the following result.

\begin{prop}\label{Jordan-homogeneous-alpha-beta}
 Let $(\alpha,\beta)\in \{1,\dots,k\}^2$. Then
 $X\in\mathcal{L}(\lambda_{\alpha},\lambda_{\beta},O_{r_{\alpha}r_{\beta}})$ if and only if $X^H\in\mathcal{L}(\lambda_{\beta},\lambda_{\alpha},O_{r_{\beta}r_{\alpha}}).$
\end{prop}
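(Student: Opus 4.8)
The plan is to show that the two membership statements are related simply by applying the conjugate-transpose operation to the defining matrix equation, exploiting the fact that the Sylvester-Lyapunov equation defining $\mathcal{L}(\lambda_{\alpha},\lambda_{\beta},O_{r_{\alpha}r_{\beta}})$ becomes, after taking $(\cdot)^H$, precisely the equation defining $\mathcal{L}(\lambda_{\beta},\lambda_{\alpha},O_{r_{\beta}r_{\alpha}})$ with unknown $X^H$. Since $(\cdot)^H$ is an involution, this single computation yields both implications at once.

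First I would write the hypothesis $X\in\mathcal{L}(\lambda_{\alpha},\lambda_{\beta},O_{r_{\alpha}r_{\beta}})$ explicitly as
\[
J_{r_{\alpha}}(\lambda_{\alpha})^H X + X J_{r_{\beta}}(\lambda_{\beta}) = O_{r_{\alpha}r_{\beta}},
\]
and then apply the Hermitian transpose to both sides. Using the rules $(MN)^H = N^H M^H$ and $(M^H)^H = M$, the term $\left(J_{r_{\alpha}}(\lambda_{\alpha})^H X\right)^H$ becomes $X^H J_{r_{\alpha}}(\lambda_{\alpha})$, the term $\left(X J_{r_{\beta}}(\lambda_{\beta})\right)^H$ becomes $J_{r_{\beta}}(\lambda_{\beta})^H X^H$, and the right-hand side stays the zero matrix, now of type $O_{r_{\beta}r_{\alpha}}$.

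Collecting terms, I would obtain
\[
J_{r_{\beta}}(\lambda_{\beta})^H X^H + X^H J_{r_{\alpha}}(\lambda_{\alpha}) = O_{r_{\beta}r_{\alpha}},
\]
which is exactly $X^H\in\mathcal{L}(\lambda_{\beta},\lambda_{\alpha},O_{r_{\beta}r_{\alpha}})$. For the converse I would note that the identical computation applied to $X^H$, together with $(X^H)^H = X$, recovers the original equation; equivalently, since conjugate transposition is an involution that interchanges the two index pairs, the equivalence holds in both directions simultaneously.

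There is essentially no obstacle here, which is why the statement is flagged as easy. The only points requiring care are bookkeeping ones: tracking that the transpose swaps the pair $(\alpha,\beta)$ into $(\beta,\alpha)$ and sends an $r_{\alpha}\times r_{\beta}$ matrix to an $r_{\beta}\times r_{\alpha}$ matrix, and correctly invoking $(M^H)^H = M$ on the Jordan blocks $J_{r_{\alpha}}(\lambda_{\alpha})^H$ and $J_{r_{\beta}}(\lambda_{\beta})^H$. The result is then a direct consequence of the algebraic properties of the Hermitian adjoint.
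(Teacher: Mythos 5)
Your proof is correct and is exactly the intended argument: the paper states this proposition without proof (``It is easy to obtain the following result''), and the direct computation you give---applying $(\cdot)^H$ to the defining equation, using $(MN)^H=N^HM^H$ and $(M^H)^H=M$, and noting that the involution property yields both implications at once---is precisely the easy verification the author had in mind. Nothing is missing; the bookkeeping on dimensions and the swap of $(\alpha,\beta)$ to $(\beta,\alpha)$ are handled correctly.
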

\noindent For solving the system \eqref{Lyapunov-Jordan-equivalent-system-homogeneous} we solve $\frac{k(k+1)}{2}$ homogeneous matrix equations with Jordan block matrices; more precisely the equations $(\alpha,\beta)$ with $\alpha\leq \beta$.

By using the results of Section \ref{section-Jordan block matrices} we have the following theorems.
\begin{thm}\label{homogeneous-numbers-103}
Let $A$ be a Jordan matrix of the form \eqref{Jordan-matrix-form-11}.

(i) If $\mathcal{K}_A=\emptyset$, then $\mathcal{L}_{A,O_r}=\{O_r\}$. 

(ii) If $\mathcal{K}_A\neq\emptyset$, then $\dim_{\mathbb{C}}\mathcal{L}_{A,O_r}=\sum\limits_{(\alpha,\beta)\in \mathcal{K}_A}\min\{r_{\alpha},r_{\beta}\}$.  
\end{thm}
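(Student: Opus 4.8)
The plan is to exploit the fact that the homogeneous system \eqref{Lyapunov-Jordan-equivalent-system-homogeneous} is fully decoupled across blocks, and then to read off each block's contribution from the results of Section \ref{section-Jordan block matrices}. First I would note that, writing a candidate solution in block form $X=(X_{\alpha\beta})$, membership $X\in\mathcal{L}_{A,O_r}$ is equivalent to requiring that each block satisfy its own equation
$$J_{r_{\alpha}}^H(\lambda_{\alpha})X_{\alpha\beta}+X_{\alpha\beta}J_{r_{\beta}}(\lambda_{\beta})=O_{r_{\alpha}r_{\beta}},$$
that is, $X_{\alpha\beta}\in\mathcal{L}(\lambda_{\alpha},\lambda_{\beta},O_{r_{\alpha}r_{\beta}})$. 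Since these $k^2$ conditions involve distinct, non-interacting blocks, each block may be chosen freely within its own solution space, and therefore
$$\mathcal{L}_{A,O_r}=\bigoplus_{\alpha,\beta=1}^k\mathcal{L}(\lambda_{\alpha},\lambda_{\beta},O_{r_{\alpha}r_{\beta}}),\qquad \dim_{\mathbb{C}}\mathcal{L}_{A,O_r}=\sum_{\alpha,\beta=1}^k\dim_{\mathbb{C}}\mathcal{L}(\lambda_{\alpha},\lambda_{\beta},O_{r_{\alpha}r_{\beta}}).$$

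Next I would compute each summand by combining Proposition \ref{Prop-Sylvester-multiplu}$(iii)$ with Theorem \ref{homogeneous-Jordan-block-F101}. Putting $\nu_{\alpha\beta}=\lambda_{\alpha}+\overline{\lambda_{\beta}}$, Proposition \ref{Prop-Sylvester-multiplu}$(iii)$ identifies $\mathcal{L}(\lambda_{\alpha},\lambda_{\beta},O_{r_{\alpha}r_{\beta}})$ with $\mathcal{L}(\nu_{\alpha\beta},0,O_{r_{\alpha}r_{\beta}})$. If $(\alpha,\beta)\notin\mathcal{K}_A$, then $\nu_{\alpha\beta}\neq 0$, so Theorem \ref{homogeneous-Jordan-block-F101}$(i)$ yields the trivial solution space and a contribution of $0$ to the dimension; if $(\alpha,\beta)\in\mathcal{K}_A$, then $\nu_{\alpha\beta}=0$, and Theorem \ref{homogeneous-Jordan-block-F101}$(ii)$ yields a contribution of $\min\{r_{\alpha},r_{\beta}\}$.

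Finally, assembling these pieces settles both claims at once. For $(i)$, if $\mathcal{K}_A=\emptyset$ then every summand is $\{O_{r_{\alpha}r_{\beta}}\}$, whence the only solution is $O_r$. For $(ii)$, the summands indexed outside $\mathcal{K}_A$ vanish, leaving $\dim_{\mathbb{C}}\mathcal{L}_{A,O_r}=\sum_{(\alpha,\beta)\in\mathcal{K}_A}\min\{r_{\alpha},r_{\beta}\}$. I do not expect any genuine difficulty here, since all the hard analytical work was already carried out in Section \ref{section-Jordan block matrices}; the only step worth stating carefully is the direct-sum decomposition, which rests on the block-diagonal structure of $A$ making the system \eqref{Lyapunov-Jordan-equivalent-system-homogeneous} genuinely independent across the index pairs $(\alpha,\beta)$.
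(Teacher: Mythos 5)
Your proof is correct and follows exactly the route the paper intends: the block-diagonal structure of $A$ decouples the equation into the system \eqref{Lyapunov-Jordan-equivalent-system-homogeneous}, and each block's solution space is read off from Proposition \ref{Prop-Sylvester-multiplu}$(iii)$ together with Theorem \ref{homogeneous-Jordan-block-F101}, summing the dimensions over $\mathcal{K}_A$. The paper leaves these steps implicit (``By using the results of Section \ref{section-Jordan block matrices}\dots''), so your write-up is simply a careful expansion of the same argument.
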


\begin{thm}[\bf Hermitian solutions]\label{Hermitian-Jordan-1001}
Let $A$ be of the form \eqref{Jordan-matrix-form-11}.

(i) If $\mathcal{K}_A=\emptyset$, then $\mathcal{L}^H_{A,O_r}=\{O_r\}$. 

(ii) If $\mathcal{K}_A\neq\emptyset$, then $\dim_{\mathbb{R}}\mathcal{L}^H_{A,O_r}=\sum\limits_{(\alpha,\beta)\in \mathcal{K}_A}\min\{r_{\alpha},r_{\beta}\}$. 
\end{thm}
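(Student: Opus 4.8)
The plan is to exploit the block decoupling already recorded in \eqref{Lyapunov-Jordan-equivalent-system-homogeneous}: a matrix $X=(X_{\alpha\beta})$ lies in $\mathcal{L}_{A,O_r}$ precisely when each block satisfies $X_{\alpha\beta}\in\mathcal{L}(\lambda_\alpha,\lambda_\beta,O_{r_\alpha r_\beta})$, and by Theorem \ref{homogeneous-Jordan-block-F101} (together with the reduction of Proposition \ref{Prop-Sylvester-multiplu}$(iii)$) such a block is forced to vanish unless $(\alpha,\beta)\in\mathcal{K}_A$. Part $(i)$ is then immediate: if $\mathcal{K}_A=\emptyset$ then $\mathcal{L}_{A,O_r}=\{O_r\}$ by Theorem \ref{homogeneous-numbers-103}$(i)$, and since $\mathcal{L}^H_{A,O_r}\subseteq\mathcal{L}_{A,O_r}$ we conclude $\mathcal{L}^H_{A,O_r}=\{O_r\}$.

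For part $(ii)$ I would first translate the Hermitian condition into block language. Writing $X^H=(X_{\beta\alpha}^H)_{\alpha\beta}$, the matrix $X$ is Hermitian if and only if $X_{\beta\alpha}=X_{\alpha\beta}^H$ for all $\alpha,\beta$; in particular each diagonal block $X_{\alpha\alpha}$ must be Hermitian. Hence a Hermitian solution is uniquely determined by the data $\{X_{\alpha\beta}:\alpha\le\beta\}$, where $X_{\alpha\alpha}$ is a Hermitian solution of the $(\alpha,\alpha)$ equation and the remaining blocks are obtained by $X_{\beta\alpha}=X_{\alpha\beta}^H$. The crucial compatibility check is that this construction indeed lands in $\mathcal{L}_{A,O_r}$: for $\alpha<\beta$ one chooses $X_{\alpha\beta}\in\mathcal{L}(\lambda_\alpha,\lambda_\beta,O_{r_\alpha r_\beta})$ freely, and Proposition \ref{Jordan-homogeneous-alpha-beta} guarantees that the forced block $X_{\beta\alpha}=X_{\alpha\beta}^H$ automatically solves the $(\beta,\alpha)$ equation, so the two are consistent precisely because $\mathcal{K}_A$ is symmetric.

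I would then count real dimensions, splitting $\mathcal{K}_A$ into its diagonal elements and its off-diagonal pairs (recall from the Remark that $\mathcal{K}_A$ is symmetric, so off-diagonal indices occur in pairs $\{(\alpha,\beta),(\beta,\alpha)\}$ with $\alpha<\beta$). A diagonal element $(\alpha,\alpha)\in\mathcal{K}_A$ forces $\lambda_\alpha$ to be purely imaginary, so after the shift of Proposition \ref{Prop-Sylvester-multiplu}$(iii)$ the block $X_{\alpha\alpha}$ ranges over $\mathcal{L}^H(0,0,O_{r_\alpha})$, which has real dimension $r_\alpha=\min\{r_\alpha,r_\alpha\}$ by Corollary \ref{Lyapunov-general-invertible-101-hermitian}. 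For an off-diagonal pair with $\alpha<\beta$, the free block $X_{\alpha\beta}$ ranges over a complex space of complex dimension $\min\{r_\alpha,r_\beta\}$ by Theorem \ref{homogeneous-Jordan-block-F101}, contributing $2\min\{r_\alpha,r_\beta\}$ real parameters, while $X_{\beta\alpha}=X_{\alpha\beta}^H$ adds nothing new. Summing these contributions gives $\dim_{\mathbb{R}}\mathcal{L}^H_{A,O_r}=\sum_{(\alpha,\alpha)\in\mathcal{K}_A}r_\alpha+\sum_{\alpha<\beta,\,(\alpha,\beta)\in\mathcal{K}_A}2\min\{r_\alpha,r_\beta\}$, which, regrouping each off-diagonal doubled term as the sum over both $(\alpha,\beta)$ and $(\beta,\alpha)$, equals $\sum_{(\alpha,\beta)\in\mathcal{K}_A}\min\{r_\alpha,r_\beta\}$.

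The main obstacle is not any single hard computation but keeping the bookkeeping airtight: one must verify that off-diagonal blocks genuinely supply free complex (hence doubled real) parameters, that diagonal blocks supply only the constrained $r_\alpha$ real parameters dictated by Corollary \ref{Lyapunov-general-invertible-101-hermitian}, and that the combinatorial regrouping exactly reproduces the stated sum. The symmetry of $\mathcal{K}_A$ and the Hermitian diagonal count $r_\alpha$ are precisely the two ingredients that make the off-diagonal doubling and the diagonal single-count recombine into the same total as in the complex case of Theorem \ref{homogeneous-numbers-103}$(ii)$.
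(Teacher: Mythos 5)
Your proposal is correct and follows the same route the paper intends: the paper states this theorem as an immediate assembly of the block decoupling \eqref{Lyapunov-Jordan-equivalent-system-homogeneous}, Theorem \ref{homogeneous-Jordan-block-F101}, Corollary \ref{Lyapunov-general-invertible-101-hermitian} for the Hermitian diagonal blocks, and Proposition \ref{Jordan-homogeneous-alpha-beta} for the forced off-diagonal blocks $X_{\beta\alpha}=X_{\alpha\beta}^H$. Your dimension bookkeeping (diagonal entries of $\mathcal{K}_A$ contributing $r_\alpha$ real parameters, off-diagonal pairs contributing $2\min\{r_\alpha,r_\beta\}$, recombining into $\sum_{(\alpha,\beta)\in\mathcal{K}_A}\min\{r_\alpha,r_\beta\}$) is exactly the computation the paper leaves implicit.
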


\begin{thm}\label{Jordan-matrix-positive-definite-10}
Let $A$ be a Jordan matrix. The homogeneous Lyapunov matrix equation \eqref{Lyapunov-matrix-equation-zero} has a  Hermitian positive definite solution if and only $A$ is a diagonal matrix with purely imaginary numbers on the diagonal  (all the eigenvalues of $A$ are purely imaginary and semisimple). 

\end{thm}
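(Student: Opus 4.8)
The plan is to work with the block decomposition \eqref{Lyapunov-Jordan-equivalent-system-homogeneous}. Writing a candidate as $X=(X_{\alpha\beta})$ with $X_{\alpha\beta}\in\mathcal{M}_{r_\alpha,r_\beta}(\mathbb{C})$, the matrix $X$ solves \eqref{Lyapunov-matrix-equation-zero} precisely when each block satisfies $X_{\alpha\beta}\in\mathcal{L}(\lambda_\alpha,\lambda_\beta,O_{r_\alpha r_\beta})$. The key observation is that I need only control the diagonal blocks $X_{\alpha\alpha}$; the off-diagonal blocks will play no role.

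For the (easy) converse, suppose $A=\mathrm{diag}(a_1\mathfrak{i},\dots,a_r\mathfrak{i})$ with $a_1,\dots,a_r\in\R$. Then $A^H=-A$, so $A^HI_r+I_rA=A^H+A=O_r$, and hence $X=I_r$ is a Hermitian positive definite solution of \eqref{Lyapunov-matrix-equation-zero}.

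For the direct implication, assume $X$ is a Hermitian positive definite solution and fix $\alpha\in\{1,\dots,k\}$. First I would note that, being a principal submatrix of a Hermitian positive definite matrix, the diagonal block $X_{\alpha\alpha}$ is itself Hermitian positive definite, and in particular nonzero. Since $X_{\alpha\alpha}$ solves the homogeneous block equation $J_{r_\alpha}^H(\lambda_\alpha)X_{\alpha\alpha}+X_{\alpha\alpha}J_{r_\alpha}(\lambda_\alpha)=O_{r_\alpha}$, I apply the reduction of Proposition \ref{Prop-Sylvester-multiplu}(iii) with $\nu_\alpha=\lambda_\alpha+\overline{\lambda_\alpha}$. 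Theorem \ref{homogeneous-Jordan-block-F101}(i) then forces $\nu_\alpha=0$ (otherwise the block equation would admit only the zero solution), which means $\mathrm{Re}(\lambda_\alpha)=0$, i.e. $\lambda_\alpha$ is purely imaginary. With $\nu_\alpha=0$ established, the same reduction gives $X_{\alpha\alpha}\in\mathcal{L}^H(0,0,O_{r_\alpha})$, a positive definite Hermitian solution of the nilpotent block equation; by Corollary \ref{Lyapunov-general-invertible-101-hermitian} this set contains a positive definite matrix if and only if $r_\alpha=1$, so $r_\alpha=1$. As $\alpha$ was arbitrary, every Jordan block is $1\times1$ with purely imaginary eigenvalue, i.e. $A$ is diagonal with purely imaginary diagonal entries.

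I expect the only delicate point to be the passage from the global hypothesis on $X$ to the local statement about each $X_{\alpha\alpha}$: the argument hinges on the fact that positive definiteness is inherited by principal submatrices and that a nonzero diagonal block is incompatible both with a nonzero parameter $\nu_\alpha$ (immediate from Theorem \ref{homogeneous-Jordan-block-F101}(i)) and with a block size exceeding one (the content of Corollary \ref{Lyapunov-general-invertible-101-hermitian}). Once these two incompatibilities are invoked, no further computation is required, since the off-diagonal blocks never enter the argument.
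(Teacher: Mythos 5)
Your proposal is correct and follows essentially the same route as the paper: both directions match the paper's proof, which also extracts the diagonal blocks $X_{\alpha\alpha}$ (positive definite as principal submatrices), reduces each block equation to the case $\nu_\alpha=\lambda_\alpha+\overline{\lambda_\alpha}$ via \eqref{Jordan-block-zero-relation}, and invokes Theorem \ref{homogeneous-Jordan-block-F101} together with Corollary \ref{Lyapunov-general-invertible-101-hermitian} to force $\nu_\alpha=0$ and $r_\alpha=1$, with $I_r$ serving as the solution in the converse. Your write-up merely makes explicit the intermediate step (nonzero block rules out $\nu_\alpha\neq 0$) that the paper leaves implicit.
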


\begin{proof}
Suppose that $A$ has the form \eqref{Jordan-matrix-form-11} and all the eigenvalues of $A$ are purely imaginary and semisimple, then $k=r$ and $r_1=\dots=r_k=1$. $I_r$ is a Hermitian positive definite solution of \eqref{Lyapunov-matrix-equation-zero}. 

Suppose that the homogeneous Lyapunov matrix equation \eqref{Lyapunov-matrix-equation-zero} has a  Hermitian positive definite solution $X$. Let $i\in \{1,\dots,k\}$, then $X_{ii}$ is a Hermitian positive definite matrix which is the solution of the homogeneous Lyapunov matrix equation $J^H_{r_i}(\lambda_i)X_{ii}+X_{ii}J_{r_i}(\lambda_i)=O_{r_i}$. This matrix equation is equivalent with $J^H_{r_i}(\lambda_i+\overline{\lambda_i})X_{ii}+X_{ii}J_{r_i}(0)=O_{r_i}$. By using Theorem \ref{homogeneous-Jordan-block-F101} and Corollary \ref{Lyapunov-general-invertible-101-hermitian} we obtain that $r_i=1$ and  $\lambda_i+\overline{\lambda_i}=0$, which implies the result. \qed

\end{proof}

\begin{thm} [\bf Real symmetric solutions]\label{Real-symmetric-solution-Jordan-232}
Let $A$ be a real matrix \eqref{Jordan-matrix-form-11}.

(i) If $\mathcal{K}_A=\emptyset$, then $\mathcal{L}^{sym}_{A,O_r}=\{O_r\}$.  

(ii) If $\mathcal{K}_A\neq\emptyset$, then $\dim_{\mathbb{R}}\mathcal{L}^{sym}_{A,O_r}=\sum\limits_{(\alpha,\alpha)\in \mathcal{K}_A}\left[\frac{r_{\alpha}+1}{2}\right]+\sum\limits_{\stackrel{(\alpha,\beta)\in \mathcal{K}_A}{\alpha< \beta}}\min\{r_{\alpha},r_{\beta}\}$. 

\end{thm}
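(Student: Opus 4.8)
The plan is to exploit the block decomposition of system \eqref{Lyapunov-Jordan-equivalent-system-homogeneous} and reduce the count to the per-block results of Section \ref{section-Jordan block matrices}. First I would record that, since $A$ is a real Jordan matrix, every eigenvalue $\lambda_\alpha$ is real; hence $(\alpha,\beta)\in\mathcal{K}_A$ means simply $\lambda_\alpha+\lambda_\beta=0$, and $(\alpha,\alpha)\in\mathcal{K}_A$ forces $\lambda_\alpha=0$. A matrix $X=(X_{\alpha\beta})$ is a real symmetric solution of \eqref{Lyapunov-matrix-equation-zero} if and only if every block is real, $X_{\beta\alpha}=X_{\alpha\beta}^T$, and each block solves its own equation in \eqref{Lyapunov-Jordan-equivalent-system-homogeneous}. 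Thus a real symmetric solution is determined by \emph{independent} data: the diagonal blocks $X_{\alpha\alpha}$, which must be real symmetric solutions of the pair $(\alpha,\alpha)$, and the strictly upper blocks $X_{\alpha\beta}$ with $\alpha<\beta$, which must be real solutions of the pair $(\alpha,\beta)$, the lower block $X_{\beta\alpha}$ being fixed as $X_{\alpha\beta}^T$. This exhibits $\mathcal{L}^{sym}_{A,O_r}$ as a direct sum of these block spaces, so its dimension is the sum of their dimensions.

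For part (i), if $\mathcal{K}_A=\emptyset$ then Theorem \ref{homogeneous-numbers-103}(i) gives $\mathcal{L}_{A,O_r}=\{O_r\}$; since $\mathcal{L}^{sym}_{A,O_r}\subseteq\mathcal{L}_{A,O_r}$ and contains $O_r$, it equals $\{O_r\}$.

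For part (ii) I would evaluate each block contribution. A block with $(\alpha,\beta)\notin\mathcal{K}_A$ has $\nu=\lambda_\alpha+\overline{\lambda_\beta}\neq0$, so by Proposition \ref{Prop-Sylvester-multiplu}(iii) and Theorem \ref{homogeneous-Jordan-block-F101}(i) only $X_{\alpha\beta}=O$ occurs and it contributes nothing. For a diagonal block with $(\alpha,\alpha)\in\mathcal{K}_A$ we have $\lambda_\alpha=0$, and the space of real symmetric solutions is $\mathcal{L}^{sym}(0,0,O_{r_\alpha})$, of real dimension $\left[\frac{r_\alpha+1}{2}\right]$ by Corollary \ref{Lyapunov-general-invertible-1012-symmetric}. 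For a strictly upper block with $(\alpha,\beta)\in\mathcal{K}_A$, $\alpha<\beta$, Proposition \ref{Prop-Sylvester-multiplu}(iii) (with $\nu=\lambda_\alpha+\lambda_\beta=0$) identifies the solution set of the pair $(\alpha,\beta)$ with $\mathcal{L}(0,0,O_{r_\alpha,r_\beta})$, and Theorem \ref{homogeneous-Jordan-block-F101}(ii) provides the basis $\{\mathcal{Y}_t^{[r_\alpha,r_\beta]}\}_{t=1}^{\min\{r_\alpha,r_\beta\}}$ consisting of \emph{real} matrices; hence its real solutions form a real vector space of dimension $\min\{r_\alpha,r_\beta\}$. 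Proposition \ref{Jordan-homogeneous-alpha-beta} then guarantees that $X_{\beta\alpha}=X_{\alpha\beta}^T$ indeed solves the pair $(\beta,\alpha)$, so the choice of $X_{\alpha\beta}$ is unconstrained. Summing the diagonal contributions over $(\alpha,\alpha)\in\mathcal{K}_A$ and the off-diagonal contributions over $(\alpha,\beta)\in\mathcal{K}_A$ with $\alpha<\beta$ yields the announced formula.

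The main obstacle is the off-diagonal count: one must check that passing from complex to real solutions does not change the dimension, and that the upper and lower blocks are tied together correctly without double counting. Both points are resolved by the reality of the basis matrices $\mathcal{Y}_t^{[r_\alpha,r_\beta]}$, which makes the real span have the same dimension over $\mathbb{R}$ as the complex span over $\mathbb{C}$, together with the transpose-symmetry of Proposition \ref{Jordan-homogeneous-alpha-beta}, which ensures each upper block is counted exactly once while its transpose is automatically an admissible lower block.
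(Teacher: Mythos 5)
Your proof is correct and takes essentially the same route the paper intends but leaves implicit: decompose the system \eqref{Lyapunov-Jordan-equivalent-system-homogeneous} block-wise, use Proposition \ref{Jordan-homogeneous-alpha-beta} (with $X^H=X^T$ for real blocks) to see that only the diagonal and strictly upper blocks are free, and then invoke Corollary \ref{Lyapunov-general-invertible-1012-symmetric} for the diagonal contributions and Theorem \ref{homogeneous-Jordan-block-F101} (with the real basis $\mathcal{Y}_t^{[r_\alpha,r_\beta]}$) for the off-diagonal ones. Your explicit justification of the two delicate points — that realness of the basis matrices gives equal real and complex dimensions, and that the transpose constraint imposes no extra conditions — supplies exactly the details the paper omits.
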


\begin{exmp}
If $A$ is a diagonal matrix and $\mathcal{K}_A\neq \emptyset$, then
\begin{align*}
\mathcal{L}_{A,O_r}= & \left\{\sum\limits_{(\alpha,\beta)\in \mathcal{K}_A}z_{\alpha\beta}{\bf e}_{\alpha}\otimes {\bf e}_{\beta}^T\,|\,z_{\alpha\beta}\in \mathbb{C}\right\};\footnote{${\bf e}_1,\dots,{\bf e}_r$ is the canonical base.}\\
\mathcal{L}^H_{A,O_r}= & \left\{\sum\limits_{(\alpha,\alpha)\in \mathcal{K}_A}u_{\alpha\alpha}{\bf e}_{\alpha}\otimes {\bf e}_{\alpha}^T \right.\\
+ &  \left.\sum\limits_{\stackrel{(\alpha,\beta)\in \mathcal{K}_A}{\alpha< \beta}}\left((u_{\alpha\beta}+\mathfrak{i}v_{\alpha\beta}){\bf e}_{\alpha}\otimes {\bf e}_{\beta}^T+(u_{\alpha\beta}-\mathfrak{i}v_{\alpha\beta}){\bf e}_{\beta}\otimes {\bf e}_{\alpha}^T\right)\,|\,u_{\alpha\beta},v_{\alpha\beta}\in \R\right\}; \\
\mathcal{L}^{sym}_{A,O_r}= & \left\{\sum\limits_{(\alpha,\alpha)\in \mathcal{K}_A}u_{\alpha\alpha}{\bf e}_{\alpha}\otimes {\bf e}_{\alpha}^T+\sum\limits_{\stackrel{(\alpha,\beta)\in \mathcal{K}_A}{\alpha< \beta}}u_{\alpha\beta}\left({\bf e}_{\alpha}\otimes {\bf e}_{\beta}^T+{\bf e}_{\beta}\otimes {\bf e}_{\alpha}^T\right)\,|\,u_{\alpha\beta}\in \R\right\}.
\end{align*}

\end{exmp}

\begin{exmp}\label{example-homogeneous-Jorda-block-11}
We consider the homogeneous Lyapunov matrix equation \eqref{Lyapunov-matrix-equation-zero} with Jordan matrix
$A=J_1(0)\oplus J_2(0)\oplus J_1(2)$.
In this case we have $n=4$, $k=3$, $\lambda_1=\lambda_2=0$, $\lambda_3=2$, $r_1=1$, $r_2=2$, $r_3=1$, and $\mathcal{K}_A=\{(1,1),(1,2),(2,1),(2,2)\}$. We make the notation $X=(X_{ij})_{i,j\in \{1,2,3\}}$, where 
$X_{11}, X_{13}, X_{3,1}, X_{3,3}\in \mathcal{M}_{1,1}(\mathbb{C})$, $X_{1,2}, X_{3,2}\in \mathcal{M}_{1,2}(\mathbb{C})$, $X_{2,1}, X_{2,3}\in \mathcal{M}_{2,1}(\mathbb{C})$, and $X_{2,2}\in \mathcal{M}_{2,2}(\mathbb{C})$.

The system \eqref{Lyapunov-Jordan-equivalent-system-homogeneous} becomes
\begin{equation}
\begin{cases}
J_1(0)^HX_{11}+X_{11}J_1(0)=O_{1,1} \\
J_1(0)^HX_{12}+X_{12}J_2(0)=O_{1,2} \\
J_1(0)^HX_{13}+X_{13}J_1(2)=O_{1,1} \\
J_2(0)^HX_{21}+X_{21}J_1(0)=O_{2,1} \\
J_2(0)^HX_{22}+X_{22}J_2(0)=O_{2,2} \\
J_2(0)^HX_{23}+X_{23}J_1(2)=O_{2,1} \\
J_1(2)^HX_{31}+X_{31}J_1(0)=O_{1,1} \\
J_1(2)^HX_{32}+X_{32}J_2(0)=O_{2,1} \\
J_1(2)^HX_{33}+X_{33}J_1(2)=O_{1,1}

\end{cases}.
\end{equation}
By using the results of the above section we have: $X_{11}=(x)$, with $x\in \mathbb{C}$, $X_{12}=
 \begin{pmatrix}
 0 & y 
 \end{pmatrix}$, with $y\in \mathbb{C}$, $X_{13}=O_{1,1}$, $X_{21}=
 \begin{pmatrix}
 0 \\
 z
 \end{pmatrix}$, with $z\in \mathbb{C}$, $X_{22}=
 \begin{pmatrix}
 0 & t \\
 -t & u
 \end{pmatrix}$, with $t,u\in \mathbb{C}$, $X_{23}=O_{2,1}$, $X_{31}=O_{1,1}$, $X_{32}=O_{1,2}$, and $X_{33}=O_{1,1}$.
 The general solution of the homogeneous Lyapunov matrix equation is
 $$X=\begin{pmatrix}
 x & 0 & y & 0 \\
 0 & 0 & t & 0 \\
 z & -t & u & 0 \\
 0 & 0 & 0 & 0 
 \end{pmatrix},\,\,\,x,y,z,t,u\in \mathbb{C}.$$

We have the following description of a general Hermitian solution:
$$X=\begin{pmatrix}
 x & 0 & y+z\mathfrak{i} & 0 \\
 0 & 0 & t\mathfrak{i} & 0 \\
 y-z\mathfrak{i}& -t\mathfrak{i} & u & 0 \\
 0 & 0 & 0 & 0 
 \end{pmatrix},\,\,\,x,y,z,t,u\in \mathbb{R}.$$

A general real symmetric solution has the form:
$$X=\begin{pmatrix}
 x & 0 & y & 0 \\
 0 & 0 & 0 & 0 \\
 y & 0 & u & 0 \\
 0 & 0 & 0 & 0 
 \end{pmatrix},\,\,\,x,y,u\in \mathbb{R}.$$
\end{exmp}
\medskip 

In what follows we study the invertible solutions.

\begin{lem}\label{invertible-1122}
Let $r_1,\dots,r_k\in \mathbb{N}^*$, $r:=\sum_{i=1}^kr_i$, $A_1,\in \mathcal{M}_{r_1}(\mathbb{C})$, $\dots, A_k\in \mathcal{M}_{r_k}(\mathbb{C})$, and $A=A_1\oplus A_2\oplus\dots\oplus A_k$. If for all $i\in\{1,\dots,k\}$ the invertible matrix $X_i$ verify  
$A_i^HX_i+X_iA_i=O_{r_i}$, then $X=X_1\oplus X_2\oplus\dots\oplus X_k$ is an invertible solution of the homogeneous Lyapunov matrix equation \eqref{Lyapunov-matrix-equation-zero}.
\end{lem}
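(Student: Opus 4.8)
The plan is to reduce everything to routine block-diagonal algebra, since all three matrices $A$, $A^H$, and $X$ share the same block-diagonal decomposition with respect to the partition $r=r_1+\dots+r_k$. First I would record that conjugate transposition respects direct sums, so that
\begin{equation*}
A^H=(A_1\oplus A_2\oplus\dots\oplus A_k)^H=A_1^H\oplus A_2^H\oplus\dots\oplus A_k^H,
\end{equation*}
and that $X=X_1\oplus X_2\oplus\dots\oplus X_k$ is likewise block diagonal. Both observations are immediate from the definition of the direct sum.

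Next I would use the fact that the product of two block-diagonal matrices with matching block sizes is again block diagonal, with each diagonal block equal to the product of the corresponding blocks. Applying this to our situation gives $A^HX=(A_1^HX_1)\oplus\dots\oplus(A_k^HX_k)$ and $XA=(X_1A_1)\oplus\dots\oplus(X_kA_k)$. Since the sum of block-diagonal matrices is computed blockwise, the $i$-th diagonal block of $A^HX+XA$ is exactly $A_i^HX_i+X_iA_i$, which equals $O_{r_i}$ by hypothesis for every $i\in\{1,\dots,k\}$. Therefore $A^HX+XA=O_r$, so $X$ is a solution of the homogeneous Lyapunov matrix equation \eqref{Lyapunov-matrix-equation-zero}.

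Finally, for invertibility I would invoke the multiplicativity of the determinant over direct sums, namely $\det(X)=\prod_{i=1}^k\det(X_i)$. Each $X_i$ is invertible by assumption, hence each factor $\det(X_i)$ is nonzero, and so $\det(X)\neq 0$; thus $X$ is invertible. This completes the argument.

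I do not expect any genuine obstacle here: the statement is entirely a matter of verifying that conjugate transposition, matrix multiplication, matrix addition, and the determinant all respect the common block-diagonal structure. The only point requiring a moment's care is checking that the block sizes are compatible so that the product $A^HX$ (and $XA$) really does decompose blockwise — but this holds because every factor is partitioned according to the same tuple $(r_1,\dots,r_k)$, so the off-diagonal blocks, being products involving zero blocks, vanish identically.
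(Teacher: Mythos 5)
Your proof is correct and follows essentially the same route as the paper: verify the Lyapunov equation blockwise using the fact that direct sums are preserved under conjugate transposition, multiplication, and addition, then conclude invertibility from $\det X=\prod_{i=1}^k\det X_i$. The paper compresses the first part into ``it is easy to observe''; you have simply written out that observation in full, which is fine.
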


\begin{proof}
It is easy to observe that $X$ is a solution of \eqref{Lyapunov-matrix-equation-zero}.  We have the equality 
$\det X=\prod_{i=1}^k\det X_i$ which implies the fact that $X$ is an invertible matrix.
\qed
\end{proof}

\begin{lem}\label{A-Jordan-oplus}
Let $\lambda\in \mathbb{C}$ with $\emph{Re}(\lambda)=0$, $r_1,\dots,r_k\in \mathbb{N}^*$, $r:=\sum_{i=1}^kr_i$, and $A:=J_{r_1}(\lambda)\oplus J_{r_2}(\lambda)\oplus\dots\oplus J_{r_k}(\lambda)$.
The homogeneous Lyapunov matrix equation \eqref{Lyapunov-matrix-equation-zero} has an invertible solution.
\end{lem}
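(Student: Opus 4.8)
The plan is to build the invertible solution blockwise along the diagonal and then invoke the block-diagonal assembly lemma. The crucial role of the hypothesis $\mathrm{Re}(\lambda)=0$ is that it forces $\nu=\lambda+\overline{\lambda}=0$ for each diagonal block, which is exactly the situation handled by the nilpotent square case of Corollary \ref{solutie-patrat-1034}.

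First I would fix an index $i\in\{1,\dots,k\}$ and consider the single-block homogeneous equation
\begin{equation*}
J_{r_i}(\lambda)^H X_i + X_i J_{r_i}(\lambda) = O_{r_i}.
\end{equation*}
Since $\mathrm{Re}(\lambda)=0$ gives $\lambda+\overline{\lambda}=0$, Proposition \ref{Prop-Sylvester-multiplu}$(iii)$ (applied with $\mu=\lambda$, so $\nu=0$) tells me this equation has the same solution set as $J_{r_i}(0)^H X_i + X_i J_{r_i}(0) = O_{r_i}$. This is precisely the square homogeneous case $r=s$, $\nu=0$ treated in Corollary \ref{solutie-patrat-1034}, where the solutions are described by the anti-lower-triangular form parametrized by $p_1,\dots,p_{r_i}\in\mathbb{C}$ with $\det(X_i)=p_1^{r_i}$. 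Choosing $p_1\neq 0$ (for instance $p_1=1$ and all other parameters zero) therefore produces an invertible solution $X_i$ of the block equation.

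Having produced an invertible solution $X_i$ for each diagonal block $A_i:=J_{r_i}(\lambda)$, I would then set $X:=X_1\oplus X_2\oplus\dots\oplus X_k$ and apply Lemma \ref{invertible-1122} directly: with $A=A_1\oplus\dots\oplus A_k$ and each $X_i$ invertible satisfying $A_i^H X_i + X_i A_i = O_{r_i}$, the lemma guarantees that $X$ is an invertible solution of the homogeneous Lyapunov matrix equation \eqref{Lyapunov-matrix-equation-zero}, since $\det X=\prod_{i=1}^k\det X_i\neq 0$.

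There is essentially no serious obstacle here; the statement is a clean corollary of three earlier results strung together. The only point requiring attention is the reduction in the first step: one must notice that the purely imaginary eigenvalue is exactly what makes $\nu=0$ on every diagonal block, so that the invertibility criterion ``$p_1\neq 0$'' of Corollary \ref{solutie-patrat-1034} is available. Off-diagonal blocks play no role, because the block-diagonal ansatz already solves the full equation once each diagonal block is solved.
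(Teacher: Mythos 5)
Your proof is correct and follows essentially the same route as the paper: the paper's one-line proof takes $X=\mathcal{Y}^{[r_1r_1]}\oplus\dots\oplus\mathcal{Y}^{[r_kr_k]}$, and each block $\mathcal{Y}^{[r_ir_i]}$ is exactly your choice $p_1=1$, $p_2=\dots=p_{r_i}=0$ in Corollary \ref{solutie-patrat-1034}, assembled via Lemma \ref{invertible-1122}. You have merely made explicit the $\nu=\lambda+\overline{\lambda}=0$ reduction and the invertibility criterion that the paper leaves implicit in the word ``observe.''
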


\begin{proof}
We observe that the matrix
$X=\mathcal{Y}^{[r_1r_1]}\oplus \mathcal{Y}^{[r_2r_2]}\oplus \dots \oplus \mathcal{Y}^{[r_kr_k]}$
is an invertible solution, where the matrix $\mathcal{Y}^{[r_ir_i]}$ is defined in Section \ref{appendix}. \qed
\end{proof}

\begin{lem}\label{invertible-1133}
Let $\lambda,\mu\in \mathbb{C}$ with $\lambda\neq \mu$ and $\overline{\lambda}+\mu=0$, $r_1,\dots,r_k\in \mathbb{N}^*$, $r:=\sum_{i=1}^kr_i$, $A(\lambda):=J_{r_1}(\lambda)\oplus J_{r_2}(\lambda)\oplus\dots\oplus J_{r_k}(\lambda)$, $A(\mu):=J_{r_1}(\mu)\oplus J_{r_2}(\mu)\oplus\dots\oplus J_{r_k}(\mu)$, and $A:=A(\lambda)\oplus A(\mu)$.
The matrix equation \eqref{Lyapunov-matrix-equation-zero} has an invertible solution.
\end{lem}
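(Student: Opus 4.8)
The plan is to exploit the $2\times 2$ block structure induced by $A=A(\lambda)\oplus A(\mu)$. Writing $X=\begin{pmatrix} X_{11} & X_{12}\\ X_{21} & X_{22}\end{pmatrix}$ with each block $X_{\gamma\delta}\in \mathcal{M}_r(\mathbb{C})$, and using $A^H=A(\lambda)^H\oplus A(\mu)^H$, the equation $A^HX+XA=O_{2r}$ splits into four independent block equations
\begin{align*}
A(\lambda)^HX_{11}+X_{11}A(\lambda) &= O_r, & A(\lambda)^HX_{12}+X_{12}A(\mu) &= O_r,\\
A(\mu)^HX_{21}+X_{21}A(\lambda) &= O_r, & A(\mu)^HX_{22}+X_{22}A(\mu) &= O_r.
\end{align*}
So it suffices to exhibit one invertible $X$ whose blocks solve these four equations.

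First I would record the scalar parameter $\nu$ attached to each block via Proposition \ref{Prop-Sylvester-multiplu}(iii). Since $\mu=-\overline{\lambda}$ and $\lambda\neq \mu$, we have $\lambda+\overline{\lambda}=2\,\mathrm{Re}(\lambda)\neq 0$; hence the $(1,1)$-block has $\nu=\lambda+\overline{\lambda}\neq 0$ and the $(2,2)$-block has $\nu=\mu+\overline{\mu}=-2\,\mathrm{Re}(\lambda)\neq 0$, while the off-diagonal blocks have $\nu=\lambda+\overline{\mu}=0$ and $\nu=\mu+\overline{\lambda}=0$. This dictates the shape of the solution: I set $X_{11}=X_{22}=O_r$ (which trivially solve their equations, and are in fact the only solutions there by Theorem \ref{homogeneous-Jordan-block-F101}(i)), and I place all the invertible content on the anti-diagonal.

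For the off-diagonal blocks I would further decompose along the constituent Jordan blocks. Choosing $X_{12}$ and $X_{21}$ block-diagonal relative to $A(\lambda)=\bigoplus_i J_{r_i}(\lambda)$ and $A(\mu)=\bigoplus_i J_{r_i}(\mu)$, the block equations reduce on each diagonal sub-block to $J_{r_i}(\lambda)^HY+YJ_{r_i}(\mu)=O_{r_i}$ and $J_{r_i}(\mu)^HY+YJ_{r_i}(\lambda)=O_{r_i}$; by Proposition \ref{Prop-Sylvester-multiplu}(iii) both solution sets coincide with $\mathcal{L}(0,0,O_{r_i})$. By Corollary \ref{solutie-patrat-1034} this set contains an invertible matrix, namely $\mathcal{Y}^{[r_i,r_i]}$ (exactly as used in Lemma \ref{A-Jordan-oplus}). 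Taking $X_{12}=X_{21}=\bigoplus_i \mathcal{Y}^{[r_i,r_i]}=:W$ yields an invertible $W\in \mathcal{M}_r(\mathbb{C})$, and $X=\begin{pmatrix} O_r & W\\ W & O_r\end{pmatrix}$ solves the Lyapunov equation.

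Finally I would confirm invertibility through the factorization $\begin{pmatrix} O_r & W\\ W & O_r\end{pmatrix}=\begin{pmatrix} O_r & I_r\\ I_r & O_r\end{pmatrix}\begin{pmatrix} W & O_r\\ O_r & W\end{pmatrix}$, which gives $\det X=(-1)^r(\det W)^2\neq 0$. The argument is essentially an assembly of earlier results; the only point requiring care is the correct matching of each block to its parameter $\nu$ — specifically, checking that $\lambda\neq \mu$ forces $\mathrm{Re}(\lambda)\neq 0$ (so the diagonal blocks vanish) while $\overline{\lambda}+\mu=0$ forces $\nu=0$ on the anti-diagonal (so invertible solutions are available precisely there). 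Beyond this bookkeeping I expect no genuine obstacle.
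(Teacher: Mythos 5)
Your proof is correct and follows essentially the same route as the paper: the same $2\times 2$ block splitting, the same observation that $\lambda+\overline{\lambda}\neq 0$ forces $X_{11}=X_{22}=O_r$, and the same anti-diagonal construction from $\bigoplus_i \mathcal{Y}^{[r_i,r_i]}$ (the paper cites Lemma \ref{A-Jordan-oplus} directly where you re-derive it blockwise via Corollary \ref{solutie-patrat-1034}). Your explicit factorization for the determinant is just a more detailed version of the paper's $\det \widehat{X}=\pm\det \widehat{X}_{12}\cdot\det \widehat{X}_{21}\neq 0$.
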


\begin{proof}
We denote $X=\begin{pmatrix}
X_{11} & X_{12} \\
X_{21} & X_{22}
\end{pmatrix}$; it follows that \eqref{Lyapunov-matrix-equation-zero} is equivalent with the system 
$$\begin{cases}
A^H(\lambda)X_{11}+X_{11}A(\lambda)=O_r \\
A^H(\lambda)X_{12}+X_{12}A(\mu)=O_r \\
A^H(\mu)X_{21}+X_{21}A(\lambda)=O_r \\
A^H(\mu)X_{22}+X_{22}A(\mu)=O_r.
\end{cases}$$

We observe that the first matrix equation is equivalent with $A^H(\lambda+\overline{\lambda})X_{11}+X_{11}A(0)=O_r$. Because $\lambda+\overline{\lambda}\neq 0$ we deduce that $X_{11}=O_r$. Analogously it can be proved that $X_{22}=O_r$.

The second matrix equation is equivalent with $A^H(0)X_{12}+X_{12}A(0)=O_r$. 
By using Lemma \ref{A-Jordan-oplus} we obtain $\widehat{X}_{12}$ and $\widehat{X}_{21}=\widehat{X}_{12}$.
The particular solution $\widehat{X}=\begin{pmatrix}
O_r & \widehat{X}_{12} \\
\widehat{X}_{21} & O_r
\end{pmatrix}$ is invertible because $\det \widehat{X}=\pm\det \widehat{X}_{12}\cdot\det \widehat{X}_{21}\neq 0$. \qed
\end{proof}

\begin{thm}\label{Jordan-invertible-545}
Suppose that $A$ is a Jordan matrix of the form \eqref{Jordan-matrix-form-11}.
The necessary and sufficient condition for the homogeneous Lyapunov matrix equation \eqref{Lyapunov-matrix-equation-zero} to have an invertible solution is that for all $ \alpha\in \{1,\dots,k\}$ we have $-\overline{\lambda_{\alpha}}\in  \{\lambda_1,\dots,\lambda_k\}$ and ${\bf w}(A,\lambda_{\alpha})={\bf w}(A,-\overline{\lambda_{\alpha}})$.\footnote{${\bf w}$ is the Weyr characteristic, see Section \ref{appendix}.}
\end{thm}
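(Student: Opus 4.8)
The plan is to reduce the statement to the similarity criterion recalled in Section~\ref{Lyapunov matrix equation. Generalities}: the homogeneous equation \eqref{Lyapunov-matrix-equation-zero} admits an invertible solution if and only if $A^H$ and $-A$ are similar. This equivalence is immediate in both directions: if $X$ is invertible with $A^HX+XA=O_r$, then $A^H=-XAX^{-1}$, so $A^H\sim -A$; conversely, if $A^H=S(-A)S^{-1}$ with $S$ invertible, then $A^HS+SA=O_r$, so $S$ itself is an invertible solution. Hence it suffices to prove that $A^H\sim -A$ is equivalent to the stated Weyr condition.

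First I would pin down the Jordan structure of $A^H$ and of $-A$. For a single block, conjugation by the reversal (anti-identity) permutation turns $J_{r_\alpha}(\lambda_\alpha)^H$ into $J_{r_\alpha}(\overline{\lambda_\alpha})$, while conjugation by $\mathrm{diag}(1,-1,1,-1,\dots)$ turns $-J_{r_\alpha}(\lambda_\alpha)$ into $J_{r_\alpha}(-\lambda_\alpha)$. Taking direct sums over $\alpha$ gives
\[
A^H\sim\bigoplus_{\alpha=1}^k J_{r_\alpha}(\overline{\lambda_\alpha}),\qquad -A\sim\bigoplus_{\alpha=1}^k J_{r_\alpha}(-\lambda_\alpha).
\]
Since the Weyr characteristic at an eigenvalue is the partition conjugate to the multiset of Jordan block sizes there, reading these forms off yields ${\bf w}(A^H,\mu)={\bf w}(A,\overline{\mu})$ and ${\bf w}(-A,\mu)={\bf w}(A,-\mu)$ for every $\mu\in\mathbb{C}$.

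Next I would invoke the fact that two matrices of equal size are similar exactly when their Weyr characteristics coincide at every point of $\mathbb{C}$. Equating ${\bf w}(A^H,\mu)={\bf w}(-A,\mu)$ gives ${\bf w}(A,\overline{\mu})={\bf w}(A,-\mu)$ for all $\mu$; setting $\nu=\overline{\mu}$ this becomes ${\bf w}(A,\nu)={\bf w}(A,-\overline{\nu})$ for all $\nu$. Specializing to $\nu=\lambda_\alpha$ gives ${\bf w}(A,\lambda_\alpha)={\bf w}(A,-\overline{\lambda_\alpha})$; because $\lambda_\alpha$ is an eigenvalue the left-hand tuple is nonzero, forcing $-\overline{\lambda_\alpha}$ to lie in the spectrum, i.e. $-\overline{\lambda_\alpha}\in\{\lambda_1,\dots,\lambda_k\}$. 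This is precisely the condition in the statement, and conversely that condition makes the Weyr equality hold at every $\nu$ (both sides vanish off the spectrum), giving $A^H\sim -A$.

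For the sufficiency direction I would also record the explicit construction enabled by the preceding lemmas, independent of the abstract similarity argument. After a permutation similarity, which preserves the existence of an invertible solution by Proposition~\ref{equivalence-Jordan-102} applied with a real $P$, group the blocks of $A$ by eigenvalue: a purely imaginary eigenvalue is treated on its own by Lemma~\ref{A-Jordan-oplus}, while an eigenvalue $\lambda$ with $\mathrm{Re}(\lambda)\neq 0$ is paired with $-\overline{\lambda}$, the Weyr equality guaranteeing that the two carry identical block-size multisets so that Lemma~\ref{invertible-1133} applies. Assembling the resulting invertible solutions as a direct sum via Lemma~\ref{invertible-1122} produces an invertible solution of \eqref{Lyapunov-matrix-equation-zero}. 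The main obstacle is only bookkeeping: keeping the conjugation $\overline{\lambda}$ and the negation $-\lambda$ straight so that the pairing $\lambda\leftrightarrow -\overline{\lambda}$ (together with the self-pairing of purely imaginary eigenvalues) matches the Weyr equality correctly; once the Jordan forms of $A^H$ and $-A$ are identified, everything else is routine.
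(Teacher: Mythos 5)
Your proposal is correct, and its sufficiency half takes a genuinely different (and shorter) route than the paper's. The necessity direction is the same in both: an invertible solution gives $A^H=X(-A)X^{-1}$, and translating the Jordan/Weyr data of $A^H$ and $-A$ back to $A$ (exactly as you do with the reversal and alternating-sign conjugations) yields ${\bf w}(A,\lambda_{\alpha})={\bf w}(A,-\overline{\lambda_{\alpha}})$ and $-\overline{\lambda_{\alpha}}\in\{\lambda_1,\dots,\lambda_k\}$. For sufficiency, however, the paper does not return to the abstract similarity criterion: it regroups the Jordan blocks of $A$ into a matrix $B=B(\mu_1,\mu_2)\oplus\dots\oplus B(\mu_{2p-1},\mu_{2p})\oplus B(\mu_{2p+1})\oplus\dots\oplus B(\mu_q)$, pairing each eigenvalue with nonzero real part with its partner $-\overline{\mu}$, invokes Lemma 3.1.18 of Horn--Johnson to get $A\sim B$, assembles an explicit invertible solution from Lemma \ref{A-Jordan-oplus} (purely imaginary eigenvalues) and Lemma \ref{invertible-1133} (paired eigenvalues) via Lemma \ref{invertible-1122}, and transports it back with Proposition \ref{equivalence-Jordan-102}. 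You instead close the loop abstractly: the stated condition forces the spectrum to be invariant under the involution $\lambda\mapsto-\overline{\lambda}$, so the two Weyr vectors vanish simultaneously off the spectrum, hence ${\bf w}(A^H,\mu)={\bf w}(-A,\mu)$ for every $\mu$, hence $A^H\sim-A$ by Lemma 3.1.18, and the similarity matrix $S$ is itself an invertible solution since $A^HS+SA=O_r$. This is cleaner and symmetric with the necessity direction; what it gives up is precisely what the paper's construction buys, namely an explicitly computable invertible solution (the point of the Remark following the theorem). Your closing paragraph, which re-derives sufficiency by pairing eigenvalues and citing Lemmas \ref{invertible-1122}, \ref{A-Jordan-oplus}, and \ref{invertible-1133}, is essentially the paper's own proof, with the cosmetic difference that you justify $A\sim B$ directly as a permutation similarity (with a real, explicit $P$) rather than through the Weyr-characteristic lemma.
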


\begin{proof}
If the homogeneous Lyapunov matrix equation \eqref{Lyapunov-matrix-equation-zero} has an invertible solution, then the matrices $A^H$ and $-A$ are similar. We deduce that $\forall \alpha\in \{1,\dots,k\}\Rightarrow -\overline{\lambda_{\alpha}}\in  \{\lambda_1,\dots,\lambda_k\}$ and $\forall \alpha\in \{1,\dots,k\}$ we have ${\bf w}(A^H,{\lambda_{\alpha}})={\bf w}(-A,-{\lambda_{\alpha}})$ which implies ${\bf w}(A,\lambda_{\alpha})={\bf w}(A,-\overline{\lambda_{\alpha}})$.

The reverse problem. We denote $\mu_1,\dots, \mu_q$ ($q\leq k$) the distinct elements of the set $\{\lambda_1,\dots,\lambda_k\}$ such that: $\mu_{\gamma}+\overline{\mu_{\gamma}}\neq 0$ for ${\gamma}\in \{1,\dots, 2p\}$, $\mu_{\gamma}+\overline{\mu_{\gamma}}=0$ for $\gamma\in \{2p+1,\dots, q\}$,  $\mu_{2\gamma}=-\overline{\mu_{2\gamma-1}}$ for $\gamma\in \{1,\dots, p\}$. We construct the matrix
$B:=B(\mu_1)\oplus B(\mu_2)\oplus\dots\oplus B(\mu_q)$,
where $B(\mu_i)$ contains all the Jordan blocks of $A$ with $\mu_i$ on the diagonal; more precisely we have 
$B(\mu_i)= J_{t_1^i}(\mu_i)\oplus J_{t_2^i}(\mu_i)\oplus\dots\oplus J_{t_{w_1(A,\lambda)}^i}(\mu_i).$ By hypotheses for $i\in \{1,\dots, p\}$ we have $w_1(A,\mu_{2i-1})=w_1(A,\mu_{2i})$, $t_1^{2i-1}=t_1^{2i}$, ..., $t_{w_1(A,\mu_{2i-1})}^{2i-1}=t_{w_1(A,\mu_{2i})}^{2i}$.

For $i\in \{1,\dots, p\}$ we note $B(\mu_{2i-1},\mu_{2i})=B(\mu_{2i-1})\oplus B(\mu_{2i})$ and we write 
$$B=B(\mu_{1},\mu_{2})\oplus\dots \oplus B(\mu_{2p-1},\mu_{2p}) \oplus B(\mu_{2p+1})\oplus \dots\oplus B(\mu_q).$$

By using Lemma 3.1.18 from \cite{horn} we deduce that $A$ and $B$ are similar matrices; $B=P^{-1}AP$.
We consider the homogeneous Lyapunov matrix equation 
$$B^HY+YB=O_r.$$
By using Lemma \ref{invertible-1122}, Lemma \ref{A-Jordan-oplus}, and Lemma \ref{invertible-1133} the above equation has an invertible solution $\widehat{Y}$. From Proposition \ref{equivalence-Jordan-102} we deduce that $\widehat{X}=P^{-H}\widehat{Y}P^{-1}$ is an invertible solution of homogeneous Lyapunov matrix equation  \eqref{Lyapunov-matrix-equation-zero}. \qed
\end{proof}

\begin{rem}
In the above results we have a method to construct an invertible solution if the conditions of Theorem \ref{Jordan-invertible-545} are satisfied.

\end{rem}

\begin{cor} 

(i) If $A$ is a Jordan matrix of the form \eqref{Jordan-matrix-form-11} and for all $\alpha \in \{1,\dots, k\}$ the number $\lambda_{\alpha}$ is purely imaginary, then the homogeneous Lyapunov matrix equation \eqref{Lyapunov-matrix-equation-zero} has an invertible solution.

(ii) If $A$ is a nilpotent Jordan matrix, then the homogeneous Lyapunov matrix equation \eqref{Lyapunov-matrix-equation-zero} has an invertible solution.
\end{cor}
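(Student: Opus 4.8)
The plan is to derive both statements directly from the necessary-and-sufficient criterion established in Theorem \ref{Jordan-invertible-545}: the homogeneous equation \eqref{Lyapunov-matrix-equation-zero} has an invertible solution precisely when, for every $\alpha\in\{1,\dots,k\}$, one has $-\overline{\lambda_{\alpha}}\in\{\lambda_1,\dots,\lambda_k\}$ together with the Weyr-characteristic equality ${\bf w}(A,\lambda_{\alpha})={\bf w}(A,-\overline{\lambda_{\alpha}})$. The entire task reduces to checking that the hypotheses of each part force these two conditions; no new construction is needed, since the constructive existence of the invertible solution is already packaged inside Theorem \ref{Jordan-invertible-545} (via Lemmas \ref{invertible-1122}, \ref{A-Jordan-oplus}, and \ref{invertible-1133}).

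For part (i), I would start from the observation that if $\lambda_{\alpha}$ is purely imaginary, say $\lambda_{\alpha}=a_{\alpha}\mathfrak{i}$ with $a_{\alpha}\in\R$, then
\begin{equation*}
-\overline{\lambda_{\alpha}}=-\overline{a_{\alpha}\mathfrak{i}}=-(-a_{\alpha}\mathfrak{i})=a_{\alpha}\mathfrak{i}=\lambda_{\alpha}.
\end{equation*}
Hence $-\overline{\lambda_{\alpha}}=\lambda_{\alpha}$ is trivially an element of $\{\lambda_1,\dots,\lambda_k\}$, and the Weyr condition becomes the tautology ${\bf w}(A,\lambda_{\alpha})={\bf w}(A,\lambda_{\alpha})$. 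Both requirements of Theorem \ref{Jordan-invertible-545} hold for every $\alpha$, so the conclusion follows.

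For part (ii), I would note that a nilpotent Jordan matrix has all its eigenvalues equal to $0$, i.e. $\lambda_{\alpha}=0$ for all $\alpha$; since $0$ is purely imaginary, part (ii) is an immediate specialization of part (i). Alternatively one can verify the criterion directly: with $\lambda_{\alpha}=0$ one has $-\overline{\lambda_{\alpha}}=0=\lambda_{\alpha}$, so the membership and the Weyr equality again hold trivially. I do not expect any genuine obstacle here; the only point requiring care is the elementary complex-conjugation computation $-\overline{a\mathfrak{i}}=a\mathfrak{i}$ that makes a purely imaginary eigenvalue its own image under $\lambda\mapsto-\overline{\lambda}$, which is exactly what collapses both hypotheses of Theorem \ref{Jordan-invertible-545} into identities.
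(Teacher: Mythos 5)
Your proposal is correct and takes essentially the same approach the paper intends: the corollary is stated immediately after Theorem \ref{Jordan-invertible-545} with no separate proof precisely because both parts follow by checking its two conditions, and your computation $-\overline{a\mathfrak{i}}=a\mathfrak{i}$ shows each purely imaginary eigenvalue is fixed by $\lambda\mapsto-\overline{\lambda}$, so both the membership condition and the Weyr equality ${\bf w}(A,\lambda_{\alpha})={\bf w}(A,-\overline{\lambda_{\alpha}})$ hold trivially, with (ii) being the special case $\lambda_{\alpha}=0$.
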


\subsection{The non-homogeneous case}

\begin{thm}
Let $A$ be a Jordan matrix of the form \eqref{Jordan-matrix-form-11}.

(i) If $\mathcal{K}_A=\emptyset$, then the Lyapunov matrix equation \eqref{Lyapunov-matrix-equation} has a unique solution.  

(ii) If $\mathcal{K}_A\neq\emptyset$, then the matrix equation has solutions if and only if the components of the matrices $C_{\alpha\beta}$, with $(\alpha,\beta)\in \mathcal{K}_A$, verify the conditions $\sum\limits_{j=1}^{i-1}(-1)^jc^{(\alpha\beta)}_{i-j,j}=0$, $\forall i\in \{2,\dots, \min\{r_{\alpha},r_{\beta}\}+1\}$.
\end{thm}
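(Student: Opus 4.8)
The plan is to exploit the fact that the system \eqref{Lyapunov-Jordan-equivalent-system} is completely decoupled: the block $(\alpha,\beta)$ of the equation $A^HX+XA+C=O_r$ involves only the single unknown block $X_{\alpha\beta}$ and the single data block $C_{\alpha\beta}$, and it reads $J_{r_\alpha}^H(\lambda_\alpha)X_{\alpha\beta}+X_{\alpha\beta}J_{r_\beta}(\lambda_\beta)+C_{\alpha\beta}=O_{r_\alpha r_\beta}$, which is precisely a Sylvester--Lyapunov equation \eqref{Sylvester-Jordan-block} with Jordan block matrices $J_{r_\alpha}(\lambda_\alpha)$ and $J_{r_\beta}(\lambda_\beta)$. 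Consequently the whole equation \eqref{Lyapunov-matrix-equation} has a solution (respectively a unique solution) if and only if each of the $k^2$ block equations has a solution (respectively a unique solution), and the full solution set is the direct product of the block solution sets.

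For each block I would apply Proposition \ref{Prop-Sylvester-multiplu}$(iii)$ to replace the pair $(\lambda_\alpha,\lambda_\beta)$ by $(\nu_{\alpha\beta},0)$ with $\nu_{\alpha\beta}=\lambda_\alpha+\overline{\lambda_\beta}$; this leaves the data block $C_{\alpha\beta}$ unchanged, since $\mathcal{L}(\lambda_\alpha,\lambda_\beta,C_{\alpha\beta})=\mathcal{L}(\nu_{\alpha\beta},0,C_{\alpha\beta})$. The dichotomy is then governed exactly by membership in $\mathcal{K}_A$. If $(\alpha,\beta)\notin\mathcal{K}_A$, then $\nu_{\alpha\beta}\neq 0$ and Theorem \ref{nonhomogeneous-11-Jordan-block} guarantees that the block equation has a unique solution for every $C_{\alpha\beta}$. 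If $(\alpha,\beta)\in\mathcal{K}_A$, then $\nu_{\alpha\beta}=0$ and Theorem \ref{Sylvester-Jordan-block-homogeneous-thm} says the block equation is solvable precisely when $\sum_{j=1}^{i-1}(-1)^jc^{(\alpha\beta)}_{i-j,j}=0$ holds for all $i\in\{2,\dots,\min\{r_\alpha,r_\beta\}+1\}$.

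Assembling these per-block facts proves the statement. For $(i)$: if $\mathcal{K}_A=\emptyset$, then every block falls in the first case, each block equation has a unique solution, and assembling them yields the unique solution of \eqref{Lyapunov-matrix-equation}. For $(ii)$: if $\mathcal{K}_A\neq\emptyset$, the blocks with $(\alpha,\beta)\notin\mathcal{K}_A$ are always solvable and impose no constraint on $C$, so the full system is solvable if and only if the remaining blocks, those with $(\alpha,\beta)\in\mathcal{K}_A$, are solvable; by the previous paragraph this is equivalent to the stated compatibility conditions on the blocks $C_{\alpha\beta}$ with $(\alpha,\beta)\in\mathcal{K}_A$.

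I do not expect a serious obstacle here, since the argument is purely a matter of bookkeeping once the two single-block theorems are in hand; the only point that deserves care is to verify that Proposition \ref{Prop-Sylvester-multiplu}$(iii)$ transports the solvability criterion without altering $C_{\alpha\beta}$, so that the conditions of Theorem \ref{Sylvester-Jordan-block-homogeneous-thm} may be read directly off the components $c^{(\alpha\beta)}_{ij}$ of the original data blocks.
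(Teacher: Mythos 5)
Your proof is correct and is exactly the argument the paper intends (the paper states this theorem without an explicit proof, relying on the block decoupling of \eqref{Lyapunov-Jordan-equivalent-system} together with Proposition \ref{Prop-Sylvester-multiplu}$(iii)$, Theorem \ref{nonhomogeneous-11-Jordan-block}, and Theorem \ref{Sylvester-Jordan-block-homogeneous-thm}). Your added care about the reduction to $(\nu_{\alpha\beta},0)$ leaving $C_{\alpha\beta}$ unchanged is precisely the point that makes the compatibility conditions readable off the original data.
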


\begin{exmp}
Suppose that $A$ is a diagonal matrix, i.e. $A=diag(\lambda_1,\dots,\lambda_r)$. 

\noindent If $\mathcal{K}_A=\emptyset$, then $\mathcal{L}_{A,C}=\left\{-\sum\limits_{\alpha,\beta=1}^r\dfrac{c_{\alpha\beta}}{\overline{\lambda_{\alpha}}+\lambda_{\beta}}{\bf e}_{\alpha}\otimes{\bf e}_{\beta}^T\right\}$. 

\noindent If $\mathcal{K}_A\neq \emptyset$, then the compatibility conditions are $c_{\alpha\beta}=0$ for all $(\alpha,\beta)\in \mathcal{K}$. We have
$$\mathcal{L}_{A,C}=\left\{-\sum\limits_{(\alpha,\beta)\notin \mathcal{K}_A}\frac{c_{\alpha\beta}}{\overline{\lambda_{\alpha}}+\lambda_{\beta}}{\bf e}_{\alpha}\otimes{\bf e}_{\beta}^T-\sum\limits_{(\alpha,\beta)\in \mathcal{K}_A}z_{\alpha\beta}{\bf e}_{\alpha}\otimes{\bf e}_{\beta}^T \,|\,z_{\alpha\beta}\in \mathbb{C}\right\}.$$

\end{exmp}

\begin{exmp}
We consider the matrix equation \eqref{Lyapunov-matrix-equation} with the Jordan matrix $A$ as in Example \ref{example-homogeneous-Jorda-block-11}. We use the notations presented in Example \ref{example-homogeneous-Jorda-block-11} and we denote by
$C=\begin{pmatrix}
C_{11} & C_{12} & C_{13} \\
C_{21} & C_{22}  & C_{23}\\
C_{31} & C_{32} & C_{33}
\end{pmatrix}$, where 
\begin{align*}
C_{11} & =\begin{pmatrix}
c_{11}
\end{pmatrix},\,\,
C_{12}=\begin{pmatrix}
c_{12} & c_{13}
\end{pmatrix},\,\,C_{13}=\begin{pmatrix}
c_{14}
\end{pmatrix}, \\
C_{21} & =\begin{pmatrix}
c_{21} \\
c_{31}
\end{pmatrix},\,\,
C_{22}=\begin{pmatrix}
c_{22} & c_{23} \\
c_{32} & c_{33}
\end{pmatrix},\,\,C_{23}=\begin{pmatrix}
c_{24} \\
c_{34}
\end{pmatrix}, \\
C_{31} & =\begin{pmatrix}
c_{41}
\end{pmatrix},\,\,
C_{32}=\begin{pmatrix}
c_{42} & c_{43}
\end{pmatrix},\,\,C_{33}=\begin{pmatrix}
c_{44}
\end{pmatrix}.
\end{align*}
The system \eqref{Lyapunov-Jordan-equivalent-system} becomes
\begin{equation}
\begin{cases}
J_1(0)^HX_{11}+X_{11}J_1(0)+C_{11}=O_{1,1} \\
J_1(0)^HX_{12}+X_{12}J_2(0)+C_{12}=O_{1,2} \\
J_1(0)^HX_{13}+X_{13}J_1(2)+C_{13}=O_{1,1} \\
J_2(0)^HX_{21}+X_{21}J_1(0)+C_{21}=O_{2,1} \\
J_2(0)^HX_{22}+X_{22}J_2(0)+C_{22}=O_{2,2} \\
J_2(0)^HX_{23}+X_{23}J_1(2)+C_{23}=O_{2,1} \\
J_1(2)^HX_{31}+X_{31}J_1(0)+C_{31}=O_{1,1} \\
J_1(2)^HX_{32}+X_{32}J_2(0)+C_{32}=O_{2,1} \\
J_1(2)^HX_{33}+X_{33}J_1(2)+C_{33}=O_{1,1}.
\end{cases}
\end{equation}
The compatibility conditions are
$c_{11}=c_{12}=c_{21}=c_{22}=0,\,\,c_{23}-c_{32}=0.$

If the compatibility conditions are satisfied, then the solution has the form
$$X=\begin{pmatrix}
p & -c_{13} & q & -\frac{c_{14}}{2} \\
-c_{31} & -c_{23} & v & -\frac{c_{24}}{2} \\
u & -c_{33}-v & w & \frac{c_{24}}{4}-\frac{c_{34}}{2} \\
-\frac{c_{41}}{2} & -\frac{c_{42}}{2} & \frac{c_{42}}{4}-\frac{c_{43}}{2} & -\frac{c_{44}}{4}
\end{pmatrix},\,\,\,p,q,u,v,w\in \mathbb{C}.$$

If $C$ is a Hermitian matrix, then a Hermitian solution has the form
$$X=\begin{pmatrix}
p_1 & -c_{13} & q_1+q_2 i & -\frac{c_{14}}{2} \\
-c_{31} & -c_{23} & -\frac{c_{33}}{2}+v_2 i & -\frac{c_{24}}{2} \\
q_1-q_2 i & -\frac{c_{33}}{2}-v_2 i & w_1 & \frac{c_{24}}{4}-\frac{c_{34}}{2} \\
-\frac{c_{41}}{2} & -\frac{c_{42}}{2} & \frac{c_{42}}{4}-\frac{c_{43}}{2} & -\frac{c_{44}}{4}
\end{pmatrix},\,\,\,p_1,q_1,q_2,v_2,w_1\in \R.$$

If $C$ is a real symmetric matrix, then a real symmetric solution has the form
$$X=\begin{pmatrix}
p_1 & -c_{13} & q_1 & -\frac{c_{14}}{2} \\
-c_{31} & -c_{23} & -\frac{c_{33}}{2} & -\frac{c_{24}}{2} \\
q_1 & -\frac{c_{33}}{2} & w_1 & \frac{c_{24}}{4}-\frac{c_{34}}{2} \\
-\frac{c_{41}}{2} & -\frac{c_{42}}{2} & \frac{c_{42}}{4}-\frac{c_{43}}{2} & -\frac{c_{44}}{4}
\end{pmatrix},\,\,\,p_1,q_1,w_1\in \R.$$

\end{exmp}

\section{Lyapunov matrix equation. General case.}\label{Lyapunov matrix equation. General case.}

For an arbitrary matrix $A$ we can use the results presented in the previous sections when we can find the eigenvalues of the matrix $A$. In this case we can construct the Jordan matrix $J_A$ which is similar to the matrix $A$ and we find some theoretical results.   To find the solutions of the Lyapunov matrix equation in the general case we need to know the invertible matrix $P_A$ which appears in the condition for similarity of the matrices $A$ and $J_A$.   

We consider the Lyapunov matrix equation \eqref{Lyapunov-matrix-equation} and we denote by $J_A\in \mathcal{M}_r(\mathbb{C})$ a Jordan matrix such that $A$ and $J_A$ are similar matrices.  The invertible matrix $P_A\in \mathcal{M}_r(\mathbb{C})$ verifies the equality $J_A=P_A^{-1}AP_A$. For a real matrix $A$ with all the eigenvalues being real numbers we choose $P_A$ to be a real matrix. We have $J_A=J_{r_1}(\lambda_1)\oplus J_{r_2}(\lambda_2)\oplus \dots\oplus J_{r_k}(\lambda_k)$ 
with $k\in \mathbb{N}^*$, $\lambda_1,\dots,\lambda_k\in \mathbb{C}$, for all $\alpha\in \{1,\dots ,k\}$ the matrix $J_{r_{\alpha}}(\lambda_{\alpha})\in \mathcal{M}_{r_{\alpha}}(\mathbb{C})$ is a Jordan block matrix, and $\sum\limits_{\alpha=1}^kr_{\alpha}=r$. We denote by
$$\mathcal{K}_A=\{(\alpha,\beta)\in \{1,\dots,k\}\,|\,\lambda_{\alpha}+\overline{\lambda_{\beta}}=0\}.$$

We construct the Lyapunov matrix equation with Jordan matrix 
\begin{equation}\label{L-J-matrix-111}
J_A^HZ+ZJ_A+D=O_r,
\end{equation}
where $D=P_A^HCP_A$. A solution of the Lyapunov matrix equation \eqref{Lyapunov-matrix-equation} has the form $X=P_A^{-H}ZP_A^{-1}$, where $Z$ is a solution of the the Lyapunov matrix equation with Jordan matrix \eqref{L-J-matrix-111}.

\subsection{The homogeneous case}
In this case we have $C=O_r$ and consequently we obtain $D=O_r$. The Lyapunov matrix equation with Jordan matrix \eqref{L-J-matrix-111} becomes
\begin{equation}\label{L-J-matrix-111-hom}
J_A^HZ+ZJ_A=O_r.
\end{equation}

The following proposition is obtained by direct computations.

\begin{prop}\label{isomorphism-Jordan-12}
Consider the function $\mathcal{F}_A:\mathcal{L}_{J_A,O_r}\rightarrow \mathcal{L}_{A,O_r}$ given by $\mathcal{F}_A(Z)=P_A^{-H}ZP_A^{-1}$.

(i) $\mathcal{F}_A$ is an $\mathbb{C}$-isomorphism.

(ii) The restriction ${\mathcal{F}_{A}}_{\left|_{\mathcal{L}^H_{J_A,O_r}}\right.}:\mathcal{L}^H_{J_A,O_r}\rightarrow \mathcal{L}^H_{A,O_r}$ is an $\R$-isomorphism.

(iii) $Z\in \mathcal{L}^H_{J_A,O_r}$ is positive definite if and only if $\mathcal{F}_A(Z)\in \mathcal{L}^H_{A,O_r}$ is positive definite.

(iv) If $A,J_A$, and $P_A$ are real matrices, then the restriction ${\mathcal{F}_{A}}_{\left|_{\mathcal{L}^{sym}_{J_A,O_r}}\right.}:\mathcal{L}^{sym}_{J_A,O_r}\rightarrow \mathcal{L}^{sym}_{A,O_r}$ is an $\R$-isomorphism.
\end{prop}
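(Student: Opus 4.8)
The plan is to recognize $\mathcal{F}_A$ as the inverse of the correspondence already supplied by Proposition \ref{equivalence-Jordan-102} in its homogeneous form, and then to verify that this correspondence respects the Hermitian, positive-definiteness, and real-symmetric structures by elementary transpose/conjugate bookkeeping. Throughout I would keep the identity $P_A^{-H}=(P_A^H)^{-1}=(P_A^{-1})^H$ in view.

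First I would record the basic bijection. Specializing Proposition \ref{equivalence-Jordan-102} to $B=J_A$, $P=P_A$, and $C=D=O_r$ shows that $X\in\mathcal{L}_{A,O_r}$ if and only if $P_A^HXP_A\in\mathcal{L}_{J_A,O_r}$. Hence the map $G(X)=P_A^HXP_A$ sends $\mathcal{L}_{A,O_r}$ into $\mathcal{L}_{J_A,O_r}$, and since $P_A$ is invertible, $G$ is $\mathbb{C}$-linear with two-sided inverse $Z\mapsto P_A^{-H}ZP_A^{-1}=\mathcal{F}_A(Z)$. In particular $\mathcal{F}_A$ indeed takes values in $\mathcal{L}_{A,O_r}$ and is a $\mathbb{C}$-linear bijection, which gives part (i). This simultaneously makes $\mathcal{F}_A$ well defined on the subspaces appearing in (ii) and (iv), with no extra argument. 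For part (ii) the key identity is $\mathcal{F}_A(Z)^H=(P_A^{-H}ZP_A^{-1})^H=P_A^{-H}Z^HP_A^{-1}=\mathcal{F}_A(Z^H)$: thus $Z$ Hermitian forces $\mathcal{F}_A(Z)$ Hermitian, and symmetrically $X$ Hermitian forces $G(X)$ Hermitian, so $\mathcal{F}_A$ restricts to a bijection $\mathcal{L}^H_{J_A,O_r}\to\mathcal{L}^H_{A,O_r}$. Since these are real vector spaces by Proposition \ref{Hermitian-solutions-abc} and $\mathcal{F}_A$ is $\mathbb{R}$-linear, the restriction is an $\mathbb{R}$-isomorphism.

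For part (iii) I would use that $\mathcal{F}_A$ is a congruence. Writing $X=\mathcal{F}_A(Z)=(P_A^{-1})^HZ(P_A^{-1})$, for any nonzero $\mathbf{v}\in\mathbb{C}^r$ one gets $\mathbf{v}^HX\mathbf{v}=\mathbf{w}^HZ\mathbf{w}$ with $\mathbf{w}=P_A^{-1}\mathbf{v}\neq\mathbf{0}$; as $\mathbf{v}\mapsto\mathbf{w}$ is a bijection of $\mathbb{C}^r\setminus\{\mathbf{0}\}$, the quadratic forms of $X$ and $Z$ are simultaneously positive, yielding the equivalence of positive definiteness. Part (iv) runs exactly parallel to (ii): when $A,J_A,P_A$ are real, $P_A^{-H}=P_A^{-T}$, so $\mathcal{F}_A(Z)$ is real whenever $Z$ is, and the identity $\mathcal{F}_A(Z)^T=\mathcal{F}_A(Z^T)$ transports symmetry; combined with $\mathbb{R}$-linearity and Proposition \ref{symmetric-solutions-abc} this gives the $\mathbb{R}$-isomorphism $\mathcal{L}^{sym}_{J_A,O_r}\to\mathcal{L}^{sym}_{A,O_r}$.

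I do not expect a genuine obstacle here, since the proposition is essentially a transport-of-structure statement and is stated to follow by direct computation. The only care required is to keep the conjugate transpose straight in each of the four identities above and to cite the correct earlier results for the real vector-space structure of the Hermitian and symmetric solution sets.
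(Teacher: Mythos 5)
Your proposal is correct and takes essentially the paper's intended route: the paper gives no proof beyond stating that the proposition ``is obtained by direct computations,'' and your argument---specializing Proposition~\ref{equivalence-Jordan-102} to $B=J_A$, $P=P_A$, $C=D=O_r$ for the bijection, then transporting the Hermitian, positive-definite, and real-symmetric structures via the congruence identities $\mathcal{F}_A(Z)^H=\mathcal{F}_A(Z^H)$, $\mathbf{v}^HX\mathbf{v}=\mathbf{w}^HZ\mathbf{w}$ with $\mathbf{w}=P_A^{-1}\mathbf{v}$, and $\mathcal{F}_A(Z)^T=\mathcal{F}_A(Z^T)$---supplies exactly those computations. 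No gaps.
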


 By using Proposition \ref{equivalence-Jordan-102}, Theorem \ref{homogeneous-numbers-103}, Theorem \ref{Hermitian-Jordan-1001}, and Theorem \ref{Real-symmetric-solution-Jordan-232} we obtain the following result.

\begin{thm}
The following holds true.

(i) If $\mathcal{K}_A=\emptyset$, then $ \mathcal{L}_{A,O_r}=\{O_r\}$.

(ii) If $\mathcal{K}_A\neq\emptyset$, then $\dim_{\mathbb{C}}\mathcal{L}_{A,O_r}=\sum\limits_{(\alpha,\beta)\in \mathcal{K}_A}\min\{r_{\alpha},r_{\beta}\}$.

(iii)  If $\mathcal{K}_A\neq\emptyset$, then $\dim_{\mathbb{R}}\mathcal{L}^H_{A,O_r}=\sum\limits_{(\alpha,\beta)\in \mathcal{K}_A}\min\{r_{\alpha},r_{\beta}\}$.

(iv) If $A$ is a real matrix, $\emph{Spec}(A)\subset \R$, and $\mathcal{K}_A\neq\emptyset$, then $\dim_{\mathbb{R}}\mathcal{L}^{sym}_{A,O_r}=\sum\limits_{(\alpha,\alpha)\in \mathcal{K}_A}\left[\frac{r_{\alpha}+1}{2}\right]+\sum\limits_{\stackrel{(\alpha,\beta)\in \mathcal{K}_A}{\alpha< \beta}}\min\{r_{\alpha},r_{\beta}\}$. 

\end{thm}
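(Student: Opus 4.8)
The plan is to transport the dimension formulas already established for the Jordan case (Theorem \ref{homogeneous-numbers-103}, Theorem \ref{Hermitian-Jordan-1001}, and Theorem \ref{Real-symmetric-solution-Jordan-232}, all stated for a Jordan matrix) across the isomorphism $\mathcal{F}_A$ of Proposition \ref{isomorphism-Jordan-12}. The point to notice first is that $\mathcal{K}_A$ is, by its very definition in this section, computed from the Jordan data $(r_1,\lambda_1),\dots,(r_k,\lambda_k)$ of $J_A$; hence the set $\mathcal{K}_A$ appearing in the statement coincides with the set to which the Jordan-case theorems apply when they are invoked for the Jordan matrix $J_A$. Since $A$ and $J_A$ are similar via $P_A$ and thus share these data, nothing further has to be checked on this side.

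For parts (i) and (ii) I would invoke Proposition \ref{isomorphism-Jordan-12}(i): the map $\mathcal{F}_A\colon\mathcal{L}_{J_A,O_r}\to\mathcal{L}_{A,O_r}$ is a $\mathbb{C}$-linear isomorphism. Since $\mathcal{F}_A(O_r)=P_A^{-H}O_rP_A^{-1}=O_r$, when $\mathcal{K}_A=\emptyset$ Theorem \ref{homogeneous-numbers-103}(i) gives $\mathcal{L}_{J_A,O_r}=\{O_r\}$, and therefore $\mathcal{L}_{A,O_r}=\{O_r\}$, proving (i). When $\mathcal{K}_A\neq\emptyset$, a $\mathbb{C}$-isomorphism preserves complex dimension, so $\dim_{\mathbb{C}}\mathcal{L}_{A,O_r}=\dim_{\mathbb{C}}\mathcal{L}_{J_A,O_r}$, and Theorem \ref{homogeneous-numbers-103}(ii) supplies the value $\sum_{(\alpha,\beta)\in\mathcal{K}_A}\min\{r_\alpha,r_\beta\}$, proving (ii).

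Part (iii) is handled in the same way, now with Proposition \ref{isomorphism-Jordan-12}(ii): the restriction of $\mathcal{F}_A$ to $\mathcal{L}^H_{J_A,O_r}$ is an $\R$-isomorphism onto $\mathcal{L}^H_{A,O_r}$, whence $\dim_{\R}\mathcal{L}^H_{A,O_r}=\dim_{\R}\mathcal{L}^H_{J_A,O_r}$, and the claimed value follows from Theorem \ref{Hermitian-Jordan-1001}(ii). For part (iv) the only additional observation is that the hypotheses ($A$ real with $\emph{Spec}(A)\subset\R$) permit choosing $J_A$ and $P_A$ real, exactly as recorded at the start of this section; then Proposition \ref{isomorphism-Jordan-12}(iv) applies, its restriction is an $\R$-isomorphism $\mathcal{L}^{sym}_{J_A,O_r}\to\mathcal{L}^{sym}_{A,O_r}$, and Theorem \ref{Real-symmetric-solution-Jordan-232}(ii) then yields the stated dimension.

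There is no genuinely difficult step here: the substantive work was carried out in Section \ref{section-Jordan block matrices} (the block-by-block solution of the Sylvester--Lyapunov equations) and in establishing the isomorphism $\mathcal{F}_A$ in Proposition \ref{isomorphism-Jordan-12}. The one thing one must not overlook is the reality bookkeeping in (iv)---that under the stated hypotheses one is entitled to a real $P_A$---since otherwise $\mathcal{F}_A$ need not map real symmetric matrices to real symmetric matrices and part (iv) of Proposition \ref{isomorphism-Jordan-12} would be unavailable.
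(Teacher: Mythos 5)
Your proposal is correct and follows essentially the same route as the paper: the paper likewise derives the theorem by transporting the Jordan-case dimension counts (Theorems \ref{homogeneous-numbers-103}, \ref{Hermitian-Jordan-1001}, \ref{Real-symmetric-solution-Jordan-232}) through the equivalence of Proposition \ref{equivalence-Jordan-102}, formalized as the isomorphism $\mathcal{F}_A$ of Proposition \ref{isomorphism-Jordan-12}. Your explicit attention to the reality of $P_A$ in part (iv) is exactly the bookkeeping the paper handles by its standing convention of choosing $P_A$ real when $A$ is real with real spectrum.
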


\begin{cor}
If $A\in \mathcal{M}_r(\mathbb{C})$, then $$\emph{rank} (I_r\otimes A^H+\overline{A^H}\otimes I_r)=r^2-\sum\limits_{(\alpha,\beta)\in \mathcal{K}_A}\min\{r_{\alpha},r_{\beta}\}.$$
\end{cor}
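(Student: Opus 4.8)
The plan is to recognize the matrix $I_r\otimes A^H+\overline{A^H}\otimes I_r$ as the coefficient matrix of the homogeneous Lyapunov equation \eqref{Lyapunov-matrix-equation-zero} after vectorization, and then to read off its rank from the dimension count already established in the preceding theorem.

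First I would introduce the column-stacking operator $\mathrm{vec}:\mathcal{M}_r(\mathbb{C})\to\mathbb{C}^{r^2}$, which is a $\mathbb{C}$-linear isomorphism, together with the standard Kronecker identity $\mathrm{vec}(PXQ)=(Q^T\otimes P)\mathrm{vec}(X)$. Applying it to each summand of $A^HX+XA$ (taking $P=A^H$, $Q=I_r$ in the first term and $P=I_r$, $Q=A$ in the second) gives
$$\mathrm{vec}(A^HX+XA)=\left(I_r\otimes A^H+A^T\otimes I_r\right)\mathrm{vec}(X).$$
The one point to verify is the algebraic identity $A^T=\overline{A^H}$, which holds because $A^H=\overline{A}^T$; with it the coefficient matrix above is exactly $M:=I_r\otimes A^H+\overline{A^H}\otimes I_r$, the matrix in the statement.

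Consequently $X\in\mathcal{L}_{A,O_r}$ if and only if $M\,\mathrm{vec}(X)=\mathbf{0}$, so the isomorphism $\mathrm{vec}$ restricts to a $\mathbb{C}$-linear isomorphism $\mathcal{L}_{A,O_r}\to\ker M$; hence $\dim_{\mathbb{C}}\ker M=\dim_{\mathbb{C}}\mathcal{L}_{A,O_r}$. By the preceding theorem this common dimension equals $\sum_{(\alpha,\beta)\in\mathcal{K}_A}\min\{r_\alpha,r_\beta\}$, the empty sum giving $0$ in the case $\mathcal{K}_A=\emptyset$ consistently with part (i). Finally, $M$ is a square matrix of order $r^2$, so the rank-nullity theorem yields $\mathrm{rank}(M)=r^2-\dim_{\mathbb{C}}\ker M$, which is the asserted formula.

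There is no substantive obstacle here: the argument is a direct translation of the matrix equation into a linear system via $\mathrm{vec}$, followed by rank-nullity. The only place requiring care is fixing the Kronecker-product convention and confirming the identity $A^T=\overline{A^H}$, so that the vectorized coefficient matrix coincides literally with $I_r\otimes A^H+\overline{A^H}\otimes I_r$ rather than with a transpose or conjugate of it.
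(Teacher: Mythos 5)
Your proof is correct and follows essentially the same route as the paper: vectorize the homogeneous Lyapunov equation via $\mathrm{vec}(A^HX+XA)=(I_r\otimes A^H+\overline{A^H}\otimes I_r)\mathrm{vec}(X)$, identify the kernel of this matrix with $\mathcal{L}_{A,O_r}$, and conclude by rank-nullity together with the dimension formula of the preceding theorem. The paper's proof is just a terser version of the same argument; your explicit check of the identity $A^T=\overline{A^H}$ and of the Kronecker convention is a useful detail the paper leaves implicit.
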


\begin{proof}
By using the rank-nullity theorem we have that 
$$\text{rank}(I_r\otimes A^H+\overline{A^H}\otimes I_r)+ \text{dim}_{\mathbb{C}}\{{\bf x}\in \mathbb{C}^{n^2}\,|(I_r\otimes A^H+\overline{A^H}\otimes I_r){\bf x}={\bf 0}\}=r^2.$$
The homogeneous Lyapunov matrix equation \eqref{Lyapunov-matrix-equation-zero} is equivalent with the linear system of equations 
\begin{equation}
(I_r\otimes A^H+\overline{A^H}\otimes I_r){\bf vec}(X)={\bf 0}.
\end{equation}
By using the above theorem we obtain the announced result. \qed
\end{proof}

\begin{thm} 
The necessary and sufficient condition for the homogeneous Lyapunov matrix equation \eqref{Lyapunov-matrix-equation-zero} to have an invertible solution is that for all $\lambda\in \emph{Spec}(A)$ we have $-\overline{\lambda}\in  \emph{Spec}(A)$ and ${\bf w}(A,\lambda)={\bf w}(A,-\overline{\lambda})$\footnote{${\bf w}$ is the Weyr characteristic, see Section \ref{appendix}}.
\end{thm}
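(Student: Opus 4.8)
The plan is to deduce this general statement from the Jordan-matrix case already settled in Theorem \ref{Jordan-invertible-545}, using the similarity $J_A = P_A^{-1}AP_A$ as the bridge. First I would pass to the homogeneous specialization of Proposition \ref{equivalence-Jordan-102} (with $C = D = O_r$, $B = J_A$, $P = P_A$): the assignment $X \mapsto Z = P_A^H X P_A$ is a bijection carrying the solutions of \eqref{Lyapunov-matrix-equation-zero} onto the solutions of the Jordan equation \eqref{L-J-matrix-111-hom}. Since $\det Z = \overline{\det P_A}\,(\det X)\,\det P_A = |\det P_A|^2 \det X$ and $P_A$ is invertible, we have $\det Z \neq 0$ if and only if $\det X \neq 0$, so this bijection preserves invertibility. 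One may equivalently phrase this through the isomorphism $\mathcal{F}_A$ of Proposition \ref{isomorphism-Jordan-12}, observing that $\mathcal{F}_A$ and its inverse each send invertible matrices to invertible matrices. Consequently \eqref{Lyapunov-matrix-equation-zero} admits an invertible solution precisely when $J_A^H Z + Z J_A = O_r$ does.

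Next I would apply Theorem \ref{Jordan-invertible-545} verbatim to the Jordan matrix $J_A = J_{r_1}(\lambda_1)\oplus\dots\oplus J_{r_k}(\lambda_k)$, which is exactly of the form \eqref{Jordan-matrix-form-11}: the Jordan equation has an invertible solution if and only if, for every block index $\alpha \in \{1,\dots,k\}$, we have $-\overline{\lambda_\alpha} \in \{\lambda_1,\dots,\lambda_k\}$ together with ${\bf w}(J_A,\lambda_\alpha) = {\bf w}(J_A,-\overline{\lambda_\alpha})$. The final step is to transport these block-indexed conditions to spectrum-indexed conditions on $A$. Because $A$ and $J_A$ are similar, they share the same spectrum, so $\{\lambda_1,\dots,\lambda_k\}$ coincides with $\text{Spec}(A)$ as a set; moreover the Weyr characteristic is a similarity invariant, whence ${\bf w}(A,\lambda) = {\bf w}(J_A,\lambda)$ for every $\lambda$. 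Substituting these identities turns the condition above into exactly the stated requirement: for all $\lambda \in \text{Spec}(A)$ we have $-\overline{\lambda} \in \text{Spec}(A)$ and ${\bf w}(A,\lambda) = {\bf w}(A,-\overline{\lambda})$.

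The only genuine point requiring care is matching the two quantifiers. Theorem \ref{Jordan-invertible-545} ranges over Jordan block indices $\alpha$, among which the eigenvalues $\lambda_\alpha$ may repeat, whereas the present statement ranges over the distinct eigenvalues $\lambda \in \text{Spec}(A)$. I expect this to be the main (though mild) obstacle: one must check that the collapse of ``for all block indices $\alpha$'' to ``for all eigenvalues $\lambda$'' is lossless. This holds because the condition attached to index $\alpha$ depends on $\lambda_\alpha$ alone and not on which block carries it, and because both the membership ``$-\overline{\lambda} \in \text{Spec}(A)$'' and the equality of Weyr characteristics are similarity-invariant. With this bookkeeping in place the two formulations are literally the same set of requirements, completing the reduction.
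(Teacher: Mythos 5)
Your proof is correct and follows essentially the same route as the paper: reduce to the Jordan case via the similarity $J_A = P_A^{-1}AP_A$ (Proposition \ref{equivalence-Jordan-102}), apply Theorem \ref{Jordan-invertible-545}, and invoke the similarity invariance of the spectrum and of the Weyr characteristic. The paper compresses this into one line, while you additionally spell out the invertibility-preservation of $X \mapsto P_A^H X P_A$ and the block-index versus distinct-eigenvalue bookkeeping, both of which the paper leaves implicit.
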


\begin{proof}
We use Theorem \ref{Jordan-invertible-545}, $\text{Spec}(A)=\text{Spec}(J_A)$ and the fact that for all $\lambda\in \text{Spec}(A)$ we have ${\bf w}(A,\lambda)={\bf w}(J_A,\lambda)$. \qed
\end{proof}

\begin{thm}
The homogeneous Lyapunov matrix equation \eqref{Lyapunov-matrix-equation-zero} has a  Hermitian positive definite solution if and only if $A$ is a diagonalizable matrix with all eigenvalues being purely imaginary (i.e. the eigenvalues are purely imaginary and semisimple). 
\end{thm}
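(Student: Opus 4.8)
The plan is to reduce this general statement to the Jordan case already established in Theorem \ref{Jordan-matrix-positive-definite-10}, transporting Hermitian positive definite solutions along the congruence induced by the matrix $P_A$. First I would fix a Jordan matrix $J_A=P_A^{-1}AP_A$ similar to $A$ and form the associated homogeneous Lyapunov matrix equation \eqref{L-J-matrix-111-hom}, namely $J_A^HZ+ZJ_A=O_r$. By Proposition \ref{equivalence-Jordan-102} (applied with $C=O_r$, hence $D=O_r$) the solution sets of $A^HX+XA=O_r$ and of $J_A^HZ+ZJ_A=O_r$ are in bijection via $\mathcal{F}_A(Z)=P_A^{-H}ZP_A^{-1}$.

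The key step is to note that this bijection respects both the Hermitian structure and positive definiteness. By Proposition \ref{isomorphism-Jordan-12}(ii) the map $\mathcal{F}_A$ restricts to an $\R$-isomorphism $\mathcal{L}^H_{J_A,O_r}\to\mathcal{L}^H_{A,O_r}$, and by Proposition \ref{isomorphism-Jordan-12}(iii) a solution $Z$ is positive definite exactly when $\mathcal{F}_A(Z)$ is. Consequently $\mathcal{L}^H_{A,O_r}$ contains a positive definite matrix if and only if $\mathcal{L}^H_{J_A,O_r}$ does; that is, \eqref{Lyapunov-matrix-equation-zero} admits a Hermitian positive definite solution if and only if the Jordan-matrix equation $J_A^HZ+ZJ_A=O_r$ does.

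Next I would apply Theorem \ref{Jordan-matrix-positive-definite-10} to the Jordan matrix $J_A$: its homogeneous Lyapunov equation has a Hermitian positive definite solution if and only if $J_A$ is diagonal with purely imaginary diagonal entries. It then remains to translate this condition back to $A$. Since $A$ and $J_A$ are similar, they share the same eigenvalues with the same partition into Jordan blocks; $J_A$ is diagonal precisely when every block is $1\times 1$, which is exactly the statement that $A$ is diagonalizable (every eigenvalue semisimple), and the diagonal entries of $J_A$ are the eigenvalues of $A$, so they are purely imaginary if and only if every eigenvalue of $A$ is purely imaginary. Chaining the two equivalences yields the claim.

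I do not expect a genuine obstacle, since the argument is a clean descent to the already-proved Jordan case. The only point requiring care is the preservation of positive definiteness under the map $Z\mapsto P_A^{-H}ZP_A^{-1}$, which is precisely the content of Proposition \ref{isomorphism-Jordan-12}(iii) (at root, Sylvester's law of inertia, as $P_A^{-H}ZP_A^{-1}$ is a Hermitian congruence of $Z$ with congruence matrix $P_A^{-1}$); once that is invoked, the remainder is bookkeeping about the Jordan structure of similar matrices.
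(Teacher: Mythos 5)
Your proposal is correct and follows essentially the same route as the paper: reduce to the Jordan matrix $J_A$ via the map $\mathcal{F}_A(Z)=P_A^{-H}ZP_A^{-1}$ of Proposition \ref{isomorphism-Jordan-12} (which preserves Hermitian positive definiteness), apply Theorem \ref{Jordan-matrix-positive-definite-10}, and translate the condition on $J_A$ back into diagonalizability of $A$ with purely imaginary eigenvalues. Your write-up is if anything slightly more explicit than the paper's, spelling out the congruence/inertia justification and the bookkeeping between $J_A$ and $A$ that the paper leaves implicit.
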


\begin{proof}
If all the eigenvalues of $A$ are purely imaginary and semisimple, then all eigenvalues of $J_A$ are purely imaginary and semisimple and, by using Theorem \ref{Jordan-matrix-positive-definite-10}, we deduce that the matrix equation \eqref{L-J-matrix-111-hom} has a Hermitian positive definite solution $Z$. The Proposition \ref{isomorphism-Jordan-12} implies that  $\mathcal{F}_A(Z)$ is a Hermitian positive definite solution of \eqref{Lyapunov-matrix-equation-zero}.

If the homogeneous Lyapunov matrix equation \eqref{Lyapunov-matrix-equation-zero} has a  Hermitian positive definite solution $X$, then $\mathcal{F}^{-1}_A(X)$ is a Hermitian positive definite solution of \eqref{L-J-matrix-111-hom}. Using Theorem \ref{Jordan-matrix-positive-definite-10} we have that all eigenvalues of $A$ are purely imaginary and semisimple. \qed
\end{proof}

\begin{exmp}
If $A$ is a diagonalizable matrix, then $J_A=\text{diag}(\lambda_1,\dots,\lambda_r)$. 

If $\mathcal{K}_A= \emptyset$, then $O_r$ is the unique solution of the homogeneous Lyapunov matrix equation \eqref{Lyapunov-matrix-equation-zero}. 

If $\mathcal{K}_A\neq \emptyset$, then 
$\mathcal{L}_{A,O_r}=\left\{\sum\limits_{(\alpha,\beta)\in \mathcal{K}_A}z_{\alpha\beta}P_A^{-H}{\bf e}_{\alpha}\otimes {\bf e}_{\beta}^TP_A^{-1}\,|\,z_{\alpha\beta}\in \mathbb{C}\right\}.$

\end{exmp}

\begin{exmp}\label{exemplul-general-1001}
We consider the homogeneous Lyapunov matrix equation with $A=\begin{pmatrix}
 0 & 0 & 0 & 0 \\
  -1 & 0 & 0 & 0 \\
  0 & 0 & 0 & 0 \\
   0 & 0 & 2 & 2
\end{pmatrix}$. The Jordan matrix is $J_A=J_1(0)\oplus J_2(0)\oplus J_1(2)$, the set $\mathcal{K}_A=\{(1,1),(1,2),(2,1),(2,2)\}$ and the matrix $P_A=\begin{pmatrix}
 0 & 0 & 1 & 0 \\
  0 & -1 & 0 & 0 \\
  -1 & 0 & 1 & 0 \\
   1 & 0 & -1 & 1
\end{pmatrix}$. By using the Example \ref{example-homogeneous-Jorda-block-11} we obtain the general  solution 
$$X=\begin{pmatrix}
 x+y+z+u & t & -x-z & 0 \\
  -t & 0 & 0 & 0 \\
  -x-y & 0 & x & 0 \\
   1 & 0 & 0 & 0
\end{pmatrix},\,\,\,
x,y,z,t,u\in \mathbb{C}.$$

\end{exmp}

\subsection{The non-homogeneous case}

We  write 
$D=(D_{\alpha\beta})_{\alpha,\beta\in \{1,\dots,k\}}
$
with $D_{\alpha\beta}=\left(d_{ij}^{(\alpha\beta)}\right)\in \mathcal{M}_{r_{\alpha},r_{\beta}}(\mathbb{C})$ for all $\alpha,\beta$.

\begin{thm}

(i) If $\mathcal{K}_A=\emptyset$, then the Lyapunov matrix equation \eqref{Lyapunov-matrix-equation} has a unique solution.  

(ii) If $\mathcal{K}_A\neq\emptyset$, then the matrix equation has solutions if and only if the components of the matrices $D_{\alpha\beta}$, with $(\alpha,\beta)\in \mathcal{K}_A$, verify the conditions $\sum\limits_{j=1}^{i-1}(-1)^jd^{(\alpha\beta)}_{i-j,j}=0$, $\forall i\in \{2,\dots, \min\{r_{\alpha},r_{\beta}\}+1\}$.
\end{thm}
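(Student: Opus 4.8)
The plan is to reduce the statement to the already-established Jordan-matrix case by transporting everything through the similarity $J_A=P_A^{-1}AP_A$, so that no fresh computation is needed.

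First I would invoke Proposition \ref{equivalence-Jordan-102} with $B=J_A$ and $D=P_A^HCP_A$. It yields that $X\in\mathcal{L}_{A,C}$ if and only if $Z=P_A^HXP_A$ solves the Lyapunov matrix equation with Jordan matrix \eqref{L-J-matrix-111}. Since $P_A$ is invertible, the assignment $X\mapsto P_A^HXP_A$ is a bijection from $\mathcal{L}_{A,C}$ onto the solution set of \eqref{L-J-matrix-111}, with inverse $Z\mapsto P_A^{-H}ZP_A^{-1}$. Consequently \eqref{Lyapunov-matrix-equation} is solvable, respectively uniquely solvable, exactly when \eqref{L-J-matrix-111} is, so it suffices to analyse \eqref{L-J-matrix-111}.

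Second, I would apply to \eqref{L-J-matrix-111} the non-homogeneous theorem already proved in the previous section for a Lyapunov matrix equation with a Jordan matrix, now reading the right-hand side through its block decomposition $D=(D_{\alpha\beta})$ with $D_{\alpha\beta}\in\mathcal{M}_{r_{\alpha},r_{\beta}}(\mathbb{C})$. Here $\mathcal{K}_A$ is defined through the eigenvalues $\lambda_1,\dots,\lambda_k$ of $J_A$ and the block partition of $D$ is aligned with the Jordan blocks of $J_A$, so the hypotheses of that theorem hold verbatim with the blocks $D_{\alpha\beta}$ playing the role of the data blocks. For part (i), when $\mathcal{K}_A=\emptyset$ that theorem produces a unique $Z$, and the bijection gives the unique solution $X=P_A^{-H}ZP_A^{-1}$ of \eqref{Lyapunov-matrix-equation}. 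For part (ii), when $\mathcal{K}_A\neq\emptyset$ that theorem makes \eqref{L-J-matrix-111} solvable precisely under the compatibility relations $\sum_{j=1}^{i-1}(-1)^jd^{(\alpha\beta)}_{i-j,j}=0$ on the entries of the blocks $D_{\alpha\beta}$ with $(\alpha,\beta)\in\mathcal{K}_A$, and the bijection transfers the same equivalence to \eqref{Lyapunov-matrix-equation}.

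The only point requiring care, and the sole potential obstacle, is checking that the similarity bijection preserves both existence and cardinality of solutions; but this is immediate from the invertibility of $P_A$. Thus all the substantive content has already been absorbed into the Jordan-matrix theorem, and the general case follows by pure transport of structure.
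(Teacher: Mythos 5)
Your proposal is correct and follows exactly the route the paper intends: the paper states this theorem without a separate proof precisely because it is the immediate consequence of Proposition \ref{equivalence-Jordan-102} (the similarity transport $X\mapsto P_A^HXP_A$, with $D=P_A^HCP_A$) combined with the corresponding non-homogeneous theorem for Jordan matrices from Section \ref{Lyapunov matrix equation with Jordan matrix}, which itself rests on Theorem \ref{nonhomogeneous-11-Jordan-block} for the blocks with $(\alpha,\beta)\notin\mathcal{K}_A$ and Theorem \ref{Sylvester-Jordan-block-homogeneous-thm} for the blocks with $(\alpha,\beta)\in\mathcal{K}_A$. Your explicit remark that the invertibility of $P_A$ makes the correspondence a bijection, hence preserves both existence and uniqueness, is exactly the point the paper leaves tacit.
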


\begin{rem}
In the papers \cite{roth} and \cite{lancaster-tismenetsky} it is presented a necessary and sufficient condition for the existence of solutions for a Sylvester matrix equation. In the case of the Lyapunov  matrix equation \eqref{Lyapunov-matrix-equation} this condition says that $\begin{pmatrix}
A^H & O_r \\
O_r & -A
\end{pmatrix}$ and $\begin{pmatrix}
A^H & -C \\
O_r & -A
\end{pmatrix}$ are similar matrices. We assess that the conditions presented in the previous theorem are easier to use in a practical example.
\end{rem}

\begin{exmp}
If $A$ is a diagonalizable matrix, then $J_A=\text{diag}(\lambda_1,\dots,\lambda_r)$. 

\noindent If $\mathcal{K}_A= \emptyset$, then $\mathcal{L}_{A,C}=\left\{-\sum\limits_{\alpha,\beta=1}^r\dfrac{c_{\alpha\beta}}{\overline{\lambda_{\alpha}}+\lambda_{\beta}}P_A^{-H}{\bf e}_{\alpha}\otimes{\bf e}_{\beta}^TP_A^{-1}\right\}$. 

\noindent If $\mathcal{K}_A\neq \emptyset$, then 
$\mathcal{L}_{A,C}=  \left\{-\sum\limits_{(\alpha,\beta)\notin \mathcal{K}_A}\dfrac{c_{\alpha\beta}}{\overline{\lambda_{\alpha}}+\lambda_{\beta}}P_A^{-H}{\bf e}_{\alpha}\otimes{\bf e}_{\beta}^TP_A^{-1}\right. \\
 \left.\,\,\,\,\,\,\,\,\,\,\,\,\,\,\,\,\,\,\,\,\,\,\,\,\,\,\,\,\,\,\,\,\,\,\,\,\,\,\,\,\,\,\,\,\,\,\,\,\,\,\,\,\,\,\,\,\,\,\,\,\,\,\,\,\,\,\,\,\,\,\,\,\,\,\,\,\,\,\,\,\,
 \,\,\,\,\,\,\,\,\,\,\,\,\,\,\,\,
 -\sum\limits_{(\alpha,\beta)\in \mathcal{K}_A}z_{\alpha\beta}P_A^{-H}{\bf e}_{\alpha}\otimes {\bf e}_{\beta}^TP_A^{-1}\,|\,z_{\alpha\beta}\in \mathbb{C}\right\}.$

\end{exmp}

\begin{exmp}
We consider a Lyapunov matrix equation \eqref{Lyapunov-matrix-equation} with the matrix $A$ from the Example \ref{exemplul-general-1001} and an arbitrary matrix $C$. The Jordan matrix $J_A$ and $P_A$ are the ones from the above example. The matrix $D$ is formed with the blocks:
{\small $$D_{11}=\begin{pmatrix}
c_{{3,3}}-c_{{4,3}}-c_{{3,4}}+c_{{4,4}}
\end{pmatrix};\,\, D_{12}=\left( \begin {array}{cc} c_{{3,2}}-c_{{4,2}}&-c_{{3,1}}+c_{{4,1}}-c_
{{3,3}}+c_{{4,3}}+c_{{3,4}}-c_{{4,4}}\end {array} \right),$$
$$D_{13}=\left( \begin {array}{c}-c_{3, 4}+c_{4, 4}\end {array} \right),\,\,D_{21}=\left( \begin {array}{c} c_{{2,3}}-c_{{2,4}}\\ \noalign{\medskip}-c_{
{1,3}}-c_{{3,3}}+c_{{4,3}}+c_{{1,4}}+c_{{3,4}}-c_{{4,4}}\end {array}
 \right),
$$
$$D_{22}=\left( \begin {array}{cc} c_{{2,2}}&-c_{{2,1}}-c_{{2,3}}+c_{{2,4}}
\\ \noalign{\medskip}-c_{{1,2}}-c_{{3,2}}+c_{{4,2}}&c_{{1,1}}+c_{{3,1}
}-c_{{4,1}}+c_{{1,3}}+c_{{3,3}}-c_{{4,3}}-c_{{1,4}}-c_{{3,4}}+c_{{4,4}
}\end {array} \right),
$$
$$D_{31}=\left( \begin {array}{c} -c_{{4,3}}+c_{{4,4}}\end {array} \right),\,\,D_{32}=\left( \begin {array}{cc} -c_{{4,2}}&c_{{4,1}}+c_{{4,3}}-c_{{4,4}}
\end {array} \right),\,\,D_{33}=\left( \begin {array}{c} c_{{4,4}}\end {array} \right).$$}
The compatibility conditions become
$$\begin{cases}
c_{{3,3}}-c_{{4,3}}-c_{{3,4}}+c_{{4,4}}=0 \\
c_{{3,2}}-c_{{4,2}}=0 \\
c_{{2,3}}-c_{{2,4}}=0 \\
c_{{2,2}}=0 \\
-c_{{2,1}}-c_{{2,3}}+c_{{2,4}}+c_{{1,2}}+c_{{3,2}}-c_{{4,2}}=0.
\end{cases}$$
By using Theorem \ref{homogeneous-Jordan-block-F101} and Theorem \ref{nonhomogeneous-11-Jordan-block} we obtain the following particular solution of the Lyapunov matrix equation \eqref{Lyapunov-matrix-equation}:

{\small $$\widehat{X}=\left( \begin {array}{cccc} 0&c_{{1,1}}+c_{{1,3}}-c_{{1,4}}&-\frac{1}{4}c_{
{2,3}}-\frac{1}{2}c_{{1,4}}&-\frac{1}{4}c_{{2,3}}-\frac{1}{2}c_{{1,4}}
\\ \noalign{\medskip}-c_{{1,3}}+c_{{1,4}}&c_{{1,2}}&c_{{1,3}}-c_{{1,4}
}-\frac{1}{2}c_{{2,3}}&-\frac{1}{2}c_{{2,3}}\\ \noalign{\medskip}-\frac{1}{4}c_{{3,2}}-\frac{1}{2}c_{{4,1}}&c_{{3,1}}-c_{{4,1}}-\frac{1}{2}c_{{3,2}}&-\frac{3}{4}c_{{3,3}}+\frac{1}{4}
c_{{3,4}}+\frac{1}{4}c_{{4,3}}&-\frac{1}{4}c_{{3,3}}+\frac{1}{4}c_{{4,3}}-\frac{1}{4}c_{{3,4}}
\\ \noalign{\medskip}-\frac{1}{4}c_{{3,2}}-\frac{1}{2}c_{{4,1}}&-\frac{1}{2}c_{{3,2}}&-\frac{1}{4}c_{{3,3}}+\frac{1}{4}c_{{3,4}}-\frac{1}{4}c_{{4,3}}&\frac{1}{4}c_{{3,3}}-\frac{1}{4}c_{{4,
3}}-\frac{1}{4}c_{{3,4}}\end {array} \right). $$}
By using Example \ref{exemplul-general-1001} we can write the general solution.

\end{exmp}

\begin{appendices}

\section{Matrix notions and results}\label{appendix}

\noindent $\Box$ $\mathcal{M}_{r\times s}(\mathbb{C})$ is the set of matrices with complex elements which have $r$ rows and $s$ columns. We use also the notations: $\mathcal{M}_{r}(\mathbb{C}):=\mathcal{M}_{r\times r}(\mathbb{C})$, $O_{r,s}\in \mathcal{M}_{s\times r}(\mathbb{C})$ is the null matrix, $O_r$ is the null matrix in $\mathcal{M}_{r}(\mathbb{C})$, and $I_r\in \mathcal{M}_{r}(\mathbb{C})$ is the identity matrix.
\medskip

\noindent $\Box$ For a matrix $M\in \mathcal{M}_{r\times s}(\mathbb{C})$ we make the notations: $M^T\in \mathcal{M}_{s\times r}(\mathbb{C})$ is the transpose matrix, $M^H\in \mathcal{M}_{s\times r}(\mathbb{C})$ is the conjugate transpose matrix.\medskip

\noindent $\Box$ $M\in \mathcal{M}_{r}(\mathbb{C})$ is a Hermitian matrix if $M^H=M$. A real symmetric matrix is a Hermitian matrix.\medskip

\noindent $\Box$ A Hermitian matrix $M\in \mathcal{M}_{r}(\mathbb{C})$ is positive definite if for all ${\bf z}\in \mathbb{C}^r\backslash\{\bf 0\}$ we have ${\bf z}^HM{\bf z}>0$.
A Hermitian matrix $M\in \mathcal{M}_{r}(\mathbb{C})$ is negative definite if $-M$ is positive definite. 

A Hermitian matrix $M\in \mathcal{M}_{r}(\mathbb{C})$ is positive semidefinite if for all ${\bf z}\in \mathbb{C}^r$ we have ${\bf z}^HM{\bf z}\geq 0$.
A Hermitian matrix $M\in \mathcal{M}_{r}(\mathbb{C})$ is negative semidefinite if $-M$ is positive semidefinite.
\medskip

\noindent $\Box$ The matrix $A\in \mathcal{M}_r(\mathbb{C})$ is similar with the matrix $B\in \mathcal{M}_r(\mathbb{C})$ if exists the invertible matrix $P\in \mathcal{M}_r(\mathbb{C})$ such that $B=P^{-1}AP$. We say that $A$ is similar with $B$ via the matrix $P$. 

The matrix $A$ is {diagonalizable} if it is similar to a diagonal matrix. A matrix that is not diagonalizable is said to be {defective}.\medskip

\noindent $\Box$ For a matrix $A\in \mathcal{M}_r(\mathbb{C})$, we denote by 
$\text{Spec}(A)$ the set of the eigenvalues of $A$. 

{ The algebraic multiplicity} $m(A,\lambda)$ of the eigenvalue $\lambda$ is its multiplicity as a root of the characteristic polynomial.
We denote by $r(A,\lambda)$ {the index} of $\lambda\in \text{Spec}(A)$ and it  is the smallest positive integer $k$ such that $\text{rank} (A-\lambda I_r)^k=\text{rank} (A-\lambda I_r)^{k+1}$.  It is the size of the  largest Jordan block of $A$ with eigenvalue $\lambda$. 
For $\lambda\in \text{Spec}(A)$ we denote by, see \cite{horn}, 
$$w_1(A,\lambda)=r-\text{rank}(A-\lambda I_r),\,\,\,w_k(A,\lambda)=\text{rank}(A-\lambda I_r)^{k-1}-\text{rank}(A-\lambda I_r)^k.$$
The number $w_1(A,\lambda)$ is the number of Jordan blocks of $A$ of all sizes that have eigenvalue $\lambda$ (which is {the geometric multiplicity} of $\lambda$). The number $w_k(A,\lambda)-w_{k+1}(A,\lambda)$ is the number of blocks with eigenvalues $\lambda$ that have size exactly $k$. 
{The Weyr characteristic} of $A$ associated with the eigenvalue $\lambda$ is ${\bf w}(A,\lambda)=\left(w_1(A,\lambda),\dots,w_{r(A,\lambda)}(A,\lambda)\right)^T\in \mathbb{N}^{r(A,\lambda)}.$

The matrices $A,B\in \mathcal{M}_r(\mathbb{C})$ are similar if and only if they have the same distinct eigenvalues, and the same Weyr characteristic associated with each eigenvalue (see Lemma 3.1.18 in \cite{horn}).

The eigenvalue $\lambda$ is {semisimple} if its algebraic and geometric multiplicities are equal; $\lambda$ is a semisimple eigenvalue if and only if every Jordan block corresponding to $\lambda$ is 1-by-1. 

The matrix $A$ is {diagonalizable} if and only if all the eigenvalues are semisimple (all the Jordan blocks are 1-by-1).
\medskip

\noindent $\Box$ Let $r_1,\dots,r_k\in \mathbb{N}^*$, $r=\sum_{i=1}^kr_i$, $M_i\in \mathcal{M}_{r_i}(\mathbb{C})$. The { direct sum} of $M_1,\dots,M_k$ is 
$$M:=M_1\oplus M_2\oplus\dots\oplus M_k=\left(
\begin{array}{cccc}
M_1 & O_{r_1r_2} & \dots & O_{r_1r_k} \\
 O_{r_2r_1} & M_2 & \dots & O_{r_2r_k} \\
\vdots & \vdots & \ddots & \vdots  \\
 O_{r_kr_1} & O_{r_kr_2} & \dots & M_k
\end{array}
\right)\in \mathcal{M}_{r}(\mathbb{C}).$$\medskip

\noindent $\Box$ If $M\in \mathcal{M}_{m\times n}(\mathbb{C})$, $M=[m_{ij}]$, and $N\in \mathcal{M}_{r\times s}(\mathbb{C})$  then { the Kronecker product} is 
$$M\otimes N=
\left(
\begin{array}{ccc}
m_{11}N & ... & m_{1n}N \\
\vdots & \ddots & \vdots \\
m_{m1}N & ... & m_{mn}N
\end{array}
\right)\in \mathcal{M}_{mr\times ns}(\mathbb{C}).$$
If $\{{\bf e}_{1},\dots,{\bf e}_{m}\}
$ is the canonical basis of $\mathbb{C}^m$ and $\{{\bf f}_{1},\dots,{\bf f}_s\}$ is the canonical basis of $\mathbb{C}^s$, then $\left\{{\bf e}_{\alpha}\otimes {\bf f}^T_{\beta}\right\}$ is the canonical basis of $\mathcal{M}_{m\times s}(\mathbb{C})$.\medskip

\noindent $\Box$ A {Jordan block matrix} is a matrix $J_r(\lambda)\in \mathcal{M}_r(\mathbb{C})$, with $r\in \mathbb{N}^*$ and  $\lambda\in \mathbb{C}$, of the form
$$J_1(\lambda)=(\lambda),\,\,\,J_r(\lambda)=\begin{pmatrix}
\lambda & 1 & 0 & \dots & 0 \\
0 & \lambda & 1 & \dots & 0\\
\vdots & \vdots & \vdots & \ddots & \vdots \\
0 & 0 & 0 & \dots & 1 \\
0 & 0 & 0 & \dots & \lambda
\end{pmatrix},\,\,\,\text{if}\,\,\,r>1.$$
The matrix $J_r(0)$ is called { nilpotent Jordan block matrix}. 
We have the following properties:

$\bullet$ $\text{Spec}(J_r(\lambda))=\{\lambda\}$. 

$\bullet$ The matrix $J_r(\lambda)$ is invertible if and only if $\lambda\neq 0$.

$ \bullet$ We have the equality:
\begin{equation}\label{Jordan-block-zero-relation}
J_r(\lambda)=\lambda I_r +J_r(0).
\end{equation}

$\bullet $ $m(J_r(\lambda),\lambda)= r(J_r(\lambda),\lambda)=r$ and ${\bf w}(J_r(\lambda))=(1,1,\dots,1)^T\in \mathbb{N}^r$.

$\bullet$ $J_r(0)$ is a nilpotent matrix with the index of nilpotence equal with $r$; we can write $J_r^r(0)=O_r$.

$\bullet$ The components of $J_r(0)^j$ are 
$[J_r(0)^j]_{\alpha\beta}=\begin{cases}
1 &  \text{ if}\,\, \beta=\alpha+j \\
0 &  \text{ otherwise}
\end{cases}.$

$\bullet$ The matrix $J_s(0)$ is known as the {upper shift matrix} of order $s$. If $M\in \mathcal{M}_{r\times s}(\mathbb{C})$, then in the matrix $MJ_s(0)$ the first column has zeros and the $k$-th column coincide with the $(k-1)$-th column of $M$. If $j\in \{1,\dots, s-1\}$, then the matrix $M(J_s(0))^{j}$ has the columns $1,\dots, j$ with zeros, the $j+1$-th column coincide with the first column of $M$ and so on, the last column coincide with the $(s-j)$-th column of $M$.

$\bullet$ The matrix $J^T_r(0)$ is known that the { lower shift matrix} of order $r$. If $M\in \mathcal{M}_{r\times s}(\mathbb{C})$, then in the matrix $J^T_r(0)M$ appear the elements of $M$ shifted downward by one position, with zeros appearing in the top row.
 If $i\in \{1,\dots, r-1\}$, then $(J^T_r(0))^{i}M$
has the rows $1,\dots, i$ with zeros, the $(i+1)$-th row coincide with the first row of $M$ and so on, the last row coincide with the $(r-i)$-th row of $M$.

$\bullet$ For $\lambda\neq 0$ we have
$J_r(\lambda)^{-1}=\sum\limits_{j=0}^{r-1}\dfrac{(-1)^j}{\lambda^{j+1}}J_r(0)^j=
\begin{pmatrix}
\frac{1}{\lambda} & -\frac{1}{\lambda^2} & \frac{1}{\lambda^3} & \dots &\frac{ (-1)^{r+1}}{\lambda^r} \\
0 & \frac{1}{\lambda} & -\frac{1}{\lambda^2} & \dots & \frac{(-1)^{r}}{\lambda^{r-1}} \\
 \vdots & \vdots & \vdots & \ddots & \vdots \\
 0 & 0 & 0 & \dots & \frac{1}{\lambda}
\end{pmatrix}$.
\medskip

\noindent $\Box$ A matrix $A\in \mathcal{M}_r(\mathbb{C})$ is a {Jordan matrix} if it has the form
\begin{equation}
A=J_{r_1}(\lambda_1)\oplus  J_{r_2}(\lambda_2)\oplus \dots \oplus  J_{r_k}(\lambda_k)
\end{equation}
with $k\in \mathbb{N}^*$, $\sum_{i=1}^kr_i=n$, $\lambda_1,\dots,\lambda_k\in \mathbb{C}$,  and for all $i\in \{1,\dots,k\}$ the matrix $J_{r_i}(\lambda_i)\in \mathcal{M}_{r_i}(\mathbb{C})$ is a Jordan block matrix.

If $\lambda_1=\lambda_2=\dots=\lambda_k=0$, then $A$ is a { nilpotent Jordan matrix}.

The Jordan matrix $A$ has the following properties:

$\bullet$ $\text{Spec}(A)=\{\lambda_1,\dots,\lambda_k\}$.

$\bullet$ $\det A=\prod\limits_{i=1}^k\lambda_i^{r_i}$. $A$ is invertible if and only if $\lambda_i\neq 0$ for all $i\in \{1,\dots,k\}$.

$\bullet$ If $A$ is invertible, then $A^{-1}=J_{r_1}(\lambda_1)^{-1}\oplus J_{r_2}(\lambda_2)^{-1} \oplus \dots \oplus J_{r_k}(\lambda_k)^{-1}$.

$\bullet$ If $k=r$, then $A$ is a diagonal matrix.

\begin{thm}\label{Jordan-canonical-form-theorem}[Jordan canonical form theorem (see \cite{horn})]
Let $A\in \mathcal{M}_r(\mathbb{C})$. There is a nonsingular matrix $P\in \mathcal{M}_r(\mathbb{C})$, natural numbers $r_1,\dots, r_k$ with $\sum_{i=1}^k r_i=r$, and scalars $\lambda_1,\dots,\lambda_k\in \mathbb{C}$ such that
$A=P\cdot J_{r_1}(\lambda_1) \oplus J_{r_2}(\lambda_2)  \oplus \dots \oplus J_{r_k}(\lambda_k)\cdot P^{-1}$.
The Jordan matrix 
$J_A=J_{r_1}(\lambda_1) \oplus J_{r_2}(\lambda_2)  \oplus \dots \oplus J_{r_k}(\lambda_k)$
is uniquely determined by $A$ up to a permutation of Jordan block matrices $J_{r_1}(\lambda_1)$, ..., $J_{r_k}(\lambda_k)$. 

If $A$ is a real matrix and has only real eigenvalues, then $P$ can be chosen to be real. 
\end{thm}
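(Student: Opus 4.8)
The plan is to prove existence by reducing to a single nilpotent operator via the primary (generalized eigenspace) decomposition, and to prove uniqueness by expressing the block data through the ranks of the powers of $A-\lambda I_r$, exactly the invariants recalled in the Appendix.

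First I would establish existence. Since $\mathbb{C}$ is algebraically closed, the characteristic polynomial of $A$ splits as $\prod_{\lambda\in \text{Spec}(A)}(x-\lambda)^{m(A,\lambda)}$. I would then invoke the primary decomposition theorem to write $\mathbb{C}^r=\bigoplus_{\lambda\in \text{Spec}(A)}V_{\lambda}$, where $V_{\lambda}=\ker(A-\lambda I_r)^{m(A,\lambda)}$ is the generalized eigenspace associated with $\lambda$; each $V_{\lambda}$ is $A$-invariant and $\dim_{\mathbb{C}}V_{\lambda}=m(A,\lambda)$. On $V_{\lambda}$ the restriction $N_{\lambda}:=(A-\lambda I_r)|_{V_{\lambda}}$ is a nilpotent operator, so the problem reduces to finding a good basis for a single nilpotent operator on each summand.

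Next I would treat the nilpotent case, which I expect to be the main obstacle. For a nilpotent operator $N$ on a space $V$ with index of nilpotence $p$, I would construct a \emph{Jordan basis} made of chains $\{v,Nv,\dots,N^{t-1}v\}$ with $N^tv=0$, arguing by induction on $\dim_{\mathbb{C}}V$. Restricting $N$ to $\text{im}\,N$ yields a smaller nilpotent operator whose Jordan basis (by induction) consists of chains inside $\text{im}\,N$; I would lift the generating vectors of these chains one step backward along $N$ and adjoin a basis of a complement of $\text{im}\,N$ inside $\ker N$. The delicate bookkeeping is to verify that the resulting chains are linearly independent and together span $V$; this is where all the work lies. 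In the basis so obtained, $N$ becomes a direct sum of blocks $J_t(0)$, and hence $A|_{V_{\lambda}}=N_{\lambda}+\lambda I$ becomes a direct sum of blocks $J_t(\lambda)$. Concatenating the bases of all the $V_{\lambda}$ gives a basis of $\mathbb{C}^r$, and taking $P$ to be the matrix whose columns are these basis vectors gives $P^{-1}AP=J_A$.

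Finally I would address uniqueness and the real case. Since similar matrices have equal ranks of $(A-\lambda I_r)^k$ for every $k$, the quantities $w_k(A,\lambda)=\text{rank}(A-\lambda I_r)^{k-1}-\text{rank}(A-\lambda I_r)^{k}$ are similarity invariants; as $w_k(A,\lambda)-w_{k+1}(A,\lambda)$ is the number of Jordan blocks with eigenvalue $\lambda$ of size exactly $k$, the entire multiset of block sizes attached to each eigenvalue is determined by $A$, which forces the Jordan matrix $J_A$ to be unique up to a permutation of its blocks. For a real matrix $A$ with $\text{Spec}(A)\subset \mathbb{R}$, each $V_{\lambda}=\ker(A-\lambda I_r)^{m(A,\lambda)}$ is the kernel of a real matrix, hence admits a real basis, and the entire chain construction uses only the real operator $N_{\lambda}$; thus every basis vector can be chosen real and $P$ is a real matrix.
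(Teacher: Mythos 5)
The paper does not actually prove this statement: it is recalled in the Appendix as a classical background result with a citation to Horn and Johnson, so there is no internal proof to compare yours against, and your proposal must be judged on its own. Your outline is the standard algebraic proof and is essentially sound: primary decomposition into the generalized eigenspaces $V_\lambda=\ker(A-\lambda I_r)^{m(A,\lambda)}$, the chain construction for a nilpotent operator by induction on dimension, uniqueness from the similarity invariance of $\operatorname{rank}(A-\lambda I_r)^k$ (exactly the Weyr-characteristic data $w_k(A,\lambda)$ that the paper recalls and later uses in Theorem \ref{Jordan-invertible-545}), and the observation that the whole construction stays inside $\mathbb{R}^r$ when $A$ is real with real spectrum. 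It is worth noting that the cited reference takes a genuinely different route: Horn and Johnson obtain the Jordan form from Schur's unitary triangularization together with a block-diagonalization lemma whose proof amounts to solving Sylvester equations, which is a nice connection to the very subject of this paper; your route is more elementary in that it needs no inner-product structure. Two points to tighten in your write-up: the subspace you adjoin should be a complement of $\ker N\cap \operatorname{im} N$ inside $\ker N$ (not ``a complement of $\operatorname{im} N$ inside $\ker N$''), and the step you flag as ``where all the work lies'' --- that the lifted chains together with that complement are linearly independent and span $V$ --- is the real content of the nilpotent case, so a complete proof must carry it out (apply $N$ to a vanishing linear combination, invoke the induction hypothesis on $\operatorname{im} N$, and finish by a dimension count via rank--nullity); as written it is a correct and honest skeleton rather than a finished argument.
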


\noindent $\Box$ If $r,s\in \mathbb{N}^*$ and $t\in \{1,\dots,\min\{r,s\}\}$ we introduce the matrices $\mathcal{Y}_t^{[r,s]}\in \mathcal{M}_{r,s}(\mathbb{C})$. 
If $r\leq s$, then 
\begin{equation}\label{Yt-formula}
\mathcal{Y}_t^{[r,s]}=\sum_{\beta=t+s-r}^s (-1)^{s-\beta}{\bf e}_{t+s-\beta}\otimes {\bf f}^T_{\beta},\,\,\,(\mathcal{Y}_t^{[r,s]})_{ij}=
\begin{cases}
(-1)^{s-j} & \text{if}\,\,i+j=t+s \\
0 & \text{otherwise}
\end{cases},
\end{equation}
where $\{{\bf e}_{\alpha}\}$ and $\{{\bf f}_{\beta}\}$ are the canonical bases of $\mathbb{C}^{r}$ and $\mathbb{C}^{s}$.
If $r>s$, then $\mathcal{Y}_t^{[r,s]}=\left(\mathcal{Y}_t^{[s,r]}\right)^T$.

We denote $\mathcal{Y}^{[r,s]}:=\mathcal{Y}_1^{[r,s]}$ and we notice that 
\begin{equation}\label{Y-identity-98}
\mathcal{Y}^{[r,s]}_{t}=
\begin{cases}
(J_r^T(0))^{t-1}\mathcal{Y}^{[r,s]} & \text{if}\,\,\,r\leq s \\
\mathcal{Y}^{[r,s]}(J_s(0))^{t-1} & \text{if}\,\,\,r> s
\end{cases}.
\end{equation}

The matrices $\mathcal{Y}_1^{[r,s]},\dots, \mathcal{Y}_{\max\{r,s\}}^{[r,s]}$ are linearly independent.
The square matrix $\mathcal{Y}^{[r,r]}$ is an anti-diagonal matrix.

For $r=2$ and $s=3$ we have 
$\mathcal{Y}_1^{[2,3]}=\begin{pmatrix}
0 & 0 & 1 \\
0 & -1 & 0
\end{pmatrix},\,\,\mathcal{Y}_2^{[2,3]}=\begin{pmatrix}
0 & 0 & 0 \\
0 & 0 & 1
\end{pmatrix}.$

For $r=3$ and $s=2$ we have  $\mathcal{Y}_1^{[3,2]}=\begin{pmatrix}
0 & 0 \\
0 & -1 \\
1 & 0
\end{pmatrix},\,\,\mathcal{Y}_2^{[3,2]}=\begin{pmatrix}
0 & 0 \\
0 & 0 \\
0 & 1
\end{pmatrix}.
$

\medskip

\noindent $\Box$ The {Pascal matrix} $\Psi^{[r,s]}\in \mathcal{M}_{r,s}(\mathbb{C})$ has the components and the form, see  \cite{brawer-pirovino} and \cite{edelman-strang} (for $r=s$), 
\begin{equation}
\left(\Psi^{[r,s]}\right)_{ij}={{i+j-2}\choose{j-1}},\,\,\,i\in \{1,\dots,r\},\,j\in \{1,\dots, s\},\,\,\,
\Psi^{[r,s]}=\begin{pmatrix}
1 & 1 & 1 & 1 & \dots \\
1 & 2 & 3 & 4 & \dots \\
1 & 3 & 6 & 10 & \dots \\
\vdots & \vdots & \vdots & \vdots & \ddots
\end{pmatrix}.
\end{equation}
The components verify the Pascal's recursion:
\begin{equation}
\begin{cases}
\left(\Psi^{[r,s]}\right)_{1j}=\left(\Psi^{[r,s]}\right)_{i,1}=1,\,\,\,i\in \{1,\dots,r\},\,j\in \{1,\dots, s\},\\
\left(\Psi^{[r,s]}\right)_{ij}=\left(\Psi^{[r,s]}\right)_{i-1,j}+\left(\Psi^{[r,s]}\right)_{i,j-1},\,\,\,i\in \{2,\dots,r\},\,j\in \{2,\dots, s\}.
\end{cases}
\end{equation}

In the paper \cite{zhang-liu} it is considered the {extended generalized Pascal matrix} $\Psi [x,y]$ with the components
\begin{equation}
\left(\Psi^{[r,s]}[x,y]\right)_{ij}= x^{i-j}y^{i+j-2}{{i+j-2}\choose{j-1}}.
\end{equation}
The components verify the recursion
\begin{equation}
\left(\Psi^{[r,s]}[x,y]\right)_{ij}=xy\left(\Psi^{[r,s]}[x,y]\right)_{i-1,j}+\frac{y}{x}\left(\Psi^{[r,s]}[x,y]\right)_{i,j-1}.
\end{equation}
It is noticeable that we have
$\Psi^{[r,s]}=\Psi^{[r,s]}[1,1].$

We are interested in the case $x=1$. We make the notation $\Psi^{[r,s]}[y]=\Psi^{[r,s]}[1,y]$ and we have
\begin{equation}\label{Pascal-generalizat-1}
\left(\Psi^{[r,s]}[y]\right)_{ij}= y^{i+j-2}{{i+j-2}\choose{j-1}},
\,\,\,\Psi^{[r,s]}[y]=\begin{pmatrix}
1 & y & y^2 & y^3 & \dots \\
y & 2y^2 & 3y^3 & 4y^4 & \dots \\
y^2 & 3y^3 & 6y^4 & 10y^5 & \dots \\
\vdots & \vdots & \vdots & \vdots & \ddots
\end{pmatrix}.
\end{equation}

The matrix has the following properties:

\noindent (a) $\left(\Psi^{[r,s]}[y]\right)_{11}=1$.

\noindent (b) For $i\in \{1,\dots,r\}$, $j\in \{1,\dots, s\}$, and $(i,j)\neq (1,1)$ it  is verified the recursion
\begin{equation}
\left(\Psi^{[r,s]}[y]\right)_{ij}=y\left(\Psi^{[r,s]}[y]\right)_{i-1,j}+y\left(\Psi^{[r,s]}[y]\right)_{i,j-1},
\end{equation}
where $\left(\Psi^{[r,s]}[y]\right)_{0,j}=\left(\Psi^{[r,s]}[y]\right)_{i,0}=0$.

\noindent (c) If $r\in \mathbb{N}^*$, then $\det \Psi^{[r,r]}[y]=y^{r(r-1)}$ (see Theorem 8 from \cite{zhang-liu}).

\noindent (d) If $r\in \mathbb{N}^*$ and $y\in \R^*$, then $\Psi^{[r,r]}[y]$ is a positive definite matrix. We can use the Sylvester's criterion and the above result.
\medskip

\noindent $\Box$ We consider $r,s\in \mathbb{N}^*$ and we construct a number of $r\cdot s$ matrices from $\mathcal{M}_{r\times s}(\mathbb{C})$ in a point $y\in \mathbb{C}^*$.

\noindent (i) $\mathcal{X}^{[r,s]}_{11}[y]=y\Psi^{[r,s]}[y]$.

\noindent (ii) $\mathcal{X}^{[r,s]}_{1j}[y]=y\left(O_{r,j-1}\,\,\Psi^{[r,s-j+1]}[y]\right)$, with $j\in \{2,\dots,s\}$.

\noindent (iii) $\mathcal{X}^{[r,s]}_{i1}[y]=y\begin{pmatrix}
O_{i-1,s} \\
\Psi^{[r-i+1,s]}[y]
\end{pmatrix}
$, with $i\in \{2,\dots,r\}$.

\noindent (iv) $\mathcal{X}^{[r,s]}_{ij}[y]=y\begin{pmatrix}
O_{i-1,j-1} & O_{i-1,s-j+1} \\
O_{r-i+1,j-1} & \Psi^{[r-i+1,s-j+1]}[y]
\end{pmatrix}
$, where $i\in \{2,\dots,r\}$ and $j\in \{2,\dots,s\}$.

We denote by $\mathcal{X}^{[r,s]}:=\mathcal{X}^{[r,s]}_{11}$ and we notice that 
\begin{equation}\label{X-identity-998}
\mathcal{X}^{[r,s]}_{ij}=(J_r^T(0))^{i-1}\mathcal{X}^{[r,s]}(J_s(0))^{j-1}.
\end{equation}

We consider the case when $r=2$ and $s=3$. The constructed matrices are:
 $$ \mathcal{X}^{[2,3]}_{11}[y]=
 \left( \begin {array}{ccc} y & y^2 & y^3
\\ \noalign{\medskip} y^2 &2 y^3 &3 y^4
\end {array} \right)
,\,\mathcal{X}^{[2,3]}_{12}[y]=\left( \begin {array}{ccc} 0& y & y^2
\\ \noalign{\medskip}0& y^2 &2 y^3
\end {array} \right),\,
\mathcal{X}^{[2,3]}_{13}[y]=\left( \begin {array}{ccc} 0&0& y\\ \noalign{\medskip}0&0
& y^2 \end {array} \right),
$$
$$\mathcal{X}^{[2,3]}_{21}[y]=\left( \begin {array}{ccc} 0&0&0\\ \noalign{\medskip} y & y^2
& y^3 \end {array} \right),\,
\mathcal{X}^{[2,3]}_{22}[y]=\left(\begin {array}{ccc} 0&0&0\\ \noalign{\medskip}0& y & y^2 \end {array} \right),\,
\mathcal{X}^{[2,3]}_{23}[y]=\left(\begin {array}{ccc} 0&0&0\\ \noalign{\medskip}0&0& y\end {array}
 \right).
$$
We present some properties of the above matrices.

\noindent (a) We have the equalities $ \mathcal{X}^{[r,s]}_{ij}[y]= \left(\mathcal{X}^{[s,r]}_{ji}[y]\right)^T$. 

\noindent (b) If $y\in \R$ and $r\in \mathbb{N}^*$, then for $y>0$ the matrix $ \mathcal{X}^{[r,r]}[y]$ is positive definite and for $y<0$ the matrix $ \mathcal{X}^{[r,r]}[y]$ is negative definite. We use the fact that
$\Psi^{[r,r]}[y]$ is a positive definite matrix.

\end{appendices}

\vspace{0.7cm}

\noindent  {\bf Acknowledgement:} This work was supported by a grant of Ministery of Research and Innovation, CNCS - UEFISCDI, project number PN-III-P4-ID-PCE-2016-0165, within PNCDI III.


\begin{thebibliography}{20}


\bibitem{birtea-casu-comanescu}{\sc P. Birtea, I. Ca\c su, D. Com\u anescu}, {\it Optimization on the symplectic group}, Available from {https://arxiv.org/pdf/1811.07345.pdf}, Nov. 2018.
\bibitem{braden}{\sc H. W. Braden}, {\it The equations $A^TX\pm X^TA=B$}, SIAM J. Matrix Anal. Appl. {\bf 20}, 2 (1998), 295--302.
\bibitem{brawer-pirovino}{\sc R. Brawer, M. Pirovino}, {\it The Linear Algebra of the Pascal Matrix}. Linear Algebra and Its Applications {\bf 174} (1992), 13--23.
\bibitem{duan}{\sc G.-R. Duan}, {\it Generalized Sylvester Equations. Unified Parametric Solutions}. CRC Press, Taylor and Francis Group, 2015.
\bibitem{edelman-strang}{\sc A. Edelman, G. Strang}, {\it  Pascal Matrices}. The American Mathematical Monthly {\bf 111}, 3 (2004), 189--197.
\bibitem{gantmacher}{\sc F.R. Gantmacher}, {\it The Theory of Matrices. Vol. 1.} Chelsea Publishing Company, New York, 1959.
\bibitem{halanay-rasvan}{\sc A. Halanay, V. R\u asvan}, {\it Applications of Liapunov Methods in Stability}, Springer Science+Business Media Dordrecht, 1993.
\bibitem{hammarling}{\sc S. J Hammarling}, {\it Numerical solution of the stable, non-negative
definite Lyapunov equation}. IMA Journal of Numerical Analysis {\bf 2}, 3 (1982), 303--323.
\bibitem{horn}{\sc R.A. Horn, C. R. Johnson}, {\it Matrix Analysis, Second Edition.} Cambridge University Press, 2013. 
\bibitem{jameson}{\sc A. Jameson}, {\it Solution of equation $AX+XB=C$ by inversion of $M\times M$ or $N\times N$ matrix.} SIAM J. Appl. Math. {\bf 16}, 5 (1968), 1020--1023.
\bibitem{khalil}{\sc H.K. Khalil}, {\it Nonlinear Systems. Second Edition.} Prentice Hall, 1996.
\bibitem{lancaster-tismenetsky} {\sc P. Lancaster, M. Tismenetsky}, {\it The Theory of Matrices. Second Edition.} Academic Press, 1985.
\bibitem{roth}{\sc W.E. Roth}, {\it The equations $AX-YB=C$ and $AX-XB=C$ in matrices.} Proc. Amer. Math. Soc. {\bf 3}, 3 (1952),  392--396.
\bibitem{simoncini}{\sc V. Simoncini}, {\it A new iterative method for solving large-scale Lyapunov matrix equations.} SIAM J. Sci. Comput., {\bf 29}, 3 (2007), 1268--1288.
\bibitem{song-feng-zhao} {\sc C. Song, J. Feng, J. Zhao}, {\it A new technique for solving continuous Sylvester-conjugate matrix equations.} Transactions of the Institute of Measurement and Control {\bf 36}, 8 (2014), 946--953.
\bibitem{zhang-liu}{\sc Z. Zhang, M. Liu}, {\it An Extension of the Generalized Pascal Matrix and its Algebraic Properties.} Linear Algebra and its Applcations {\bf 271} (1998), 169--177.


\end{thebibliography}
\end{document}